
\documentclass[a4paper,10pt,reqno]{amsart}
\usepackage{amsmath,amsfonts,amsthm}
\usepackage[mathscr]{eucal}
\usepackage{multirow}
\usepackage{graphicx}

\numberwithin{equation}{section}
\newtheorem{theorem}{Theorem}[section]
\newtheorem{lemma}[theorem]{Lemma}
\newtheorem{proposition}{Proposition}

\theoremstyle{definition}

\theoremstyle{remark}
\newtheorem{remark}{Remark}

\def\N{\mathbb{N}}

\def\Var{\text{Var}}
\def\Cov{\text{Cov}}

\def\g{\gamma}

\def\dl{\Delta}

\newcommand{\e}{\mathrm{e}}

\newcommand{\Z}{\mathbb Z}
\newcommand{\R}{\mathbb R}

\newcommand{\rd}{\;\mathrm{d}}
\def\ba{\frac{b}{a}}
\def\bba{\frac{2b}{a}}

\begin{document}

\title[Linear SFDE with average functional]{Long Memory and Financial Market Bubble Dynamics in Affine Stochastic Differential Equations with Average Functionals}

\author{John A. D. Appleby}
\address{Edgeworth Centre for Financial Mathematics, School of Mathematical Sciences, Dublin City University,
Ireland} \email{john.appleby@dcu.ie}
\urladdr{webpages.dcu.ie/\textasciitilde applebyj}

\author{John A. Daniels}
\address{Edgeworth Centre for Financial Mathematics, School of Mathematical Sciences, Dublin City University,
Ireland} \email{john.daniels2@mail.dcu.ie} 

\thanks{Both authors gratefully acknowledge Science Foundation Ireland for the support of this research
under the Mathematics Initiative 2007 grant 07/MI/008 ``Edgeworth
Centre for Financial Mathematics''. The second author is also
supported by the Irish Research Council for Science, Engineering and
Technology under the Embark Initiative scholarship scheme.}

\subjclass{Primary: 60H10}

\keywords{stochastic functional differential equation, long memory,
confluent hypergeometric function, inefficient market, stationarity}

\date{23 May 2013}

\begin{abstract}
In this paper we consider the growth, large fluctuations and memory
properties of an affine stochastic functional differential equation
with an average functional where the contributions of the average
and instantaneous terms are parameterised. An asymptotic analysis of
the solution of this equation is conducted for all values of the
parameters of the equation. When solutions are recurrent, we show
that the autocovariance function of the solution decays at a
polynomial rate, even though the solution is asymptotically equal to
another asymptotically stationary process whose autocovariance
function decays exponentially. It is shown that when solutions grow,
they do so at either a polynomial or exponential rate in time
depending on the sign of a parameter of the model, modulo some
exceptional parameter sets. On these exceptional sets, solutions are
recurrent on the real line with large fluctuations consistent with
the Law of the Iterated logarithm, or exhibit subexponential yet
superpolynomial growth.
\end{abstract}

\maketitle

\section{Introduction and overview}
In this paper, we consider the asymptotic behaviour of an affine
scalar stochastic functional differential equation where the average
of the process over its entire history appears on the right--hand
side. Accordingly, we study
\begin{equation} \label{eq.stochintro}
dX(t)=\left(aX(t)+b\frac{1}{1+t}\int_{-1}^t X(s)\,ds\right)\,dt +
\sigma\,dB(t), \quad t\geq 0,
\end{equation}
where $X$ is given by the continuous function $\psi$, defined on
$[-1,0]$, $B$ is a standard one--dimensional Brownian motion and
$\sigma\neq 0$. Here $a$ and $b$ are real parameters. There is a
unique strong solution of \eqref{eq.stochintro} which is a Gaussian
process. The goal of the paper is to describe for all pairs of the
parameters $a$ and $b$ the asymptotic behaviour of the paths, as
well as information about the autocovariance function of $X$ in the
case that the solution is recurrent on $\mathbb{R}$.

\subsection{Organisation of results and methods of proof}
This model was studied in \cite{App_Dan}, where the authors
considered the case $a>0$. Under this condition the solution $X$ was
shown to grow at a well--defined exponential rate, with a polynomial
correction. Specifically, the rate of growth is given by
\begin{equation}\label{eq:avXexp}
    \lim_{t\to\infty} \frac{X(t)}{e^{at}t^{b/a}}=C, \quad\text{a.s.}
\end{equation}
where $C$ is an almost surely finite and Gaussian distributed random
variable. The results in \cite{App_Dan} rely on the theory of
admissibility of linear deterministic Volterra operators.

While the present article establishes some new results concerning
the case when $a>0$, for the most part it is concerned with the case
when $a\leq0$, where the solution need not have a well-defined
growth rate but rather may fluctuate. This behaviour is not wholly
unexpected; in the case when $a<0$ and $b=0$, for example, the
solution of \eqref{eq.stochintro} is an asymptotically stationary
Ornstein--Uhlenbeck process, while when $a=0$ and $b=0$, it is a
scaled standard Brownian motion.

A complete asymptotic dynamical picture of the solution $X$ is
determined for all real values of $a$ and $b$ in the paper. Our
analysis shows that there are only three principal regions in the
`$a-b$' parameter space, within which the process $X$ undergoes
different pathwise asymptotic behaviour. For clarity we provide a
bifurcation diagram of the parameter space:
\begin{center}
    \includegraphics[height = 6.5cm]{./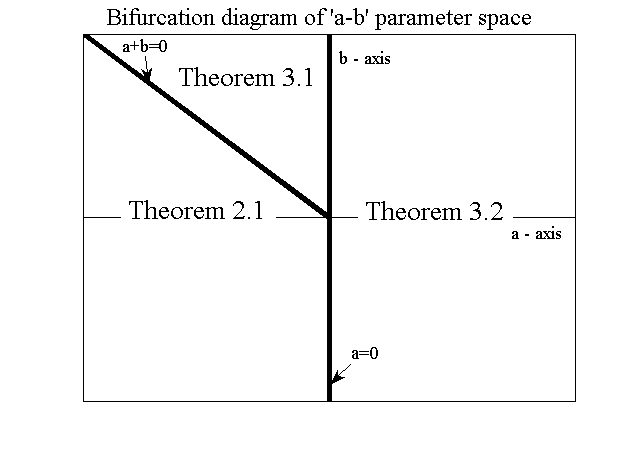} \hspace{.1cm}
\end{center}
\vspace{-0.5cm}
\begin{itemize}
\item In Theorem~\ref{thm.averagegaussian}, corresponding to $a<0$ and $a+b\leq0$, the solution $X$ is asymptotically equal to an
Ornstein--Uhlenbeck process and has oscillations of magnitude
described by
\begin{equation*} 
\limsup_{t\to\infty} \frac{X(t)}{\sqrt{2\log
t}}=\frac{\sigma}{\sqrt{2|a|}}, \quad \liminf_{t\to\infty}
\frac{X(t)}{\sqrt{2\log t}}=-\frac{\sigma}{\sqrt{2|a|}},
\quad\text{a.s.}
\end{equation*}
\item In Theorem~\ref{cor:stochexp}, corresponding to $a<0$ and $a+b>0$, the solution $X$ tends to plus or minus infinity at a polynomial rate
\begin{equation} \label{eq:stochpolygrowthintro}
    \lim_{t\to\infty}\frac{X(t)}{t^{-(1+\ba)}}=C, \quad \text{a.s.}
\end{equation}
    where $C$ is an almost surely finite proper random variable.
\item In Theorem~\ref{thm.bubbleaveexp}, corresponding to $a>0$, the solution $X$ is shown to obey \eqref{eq:avXexp}
\item In Theorem~\ref{thm:modBes}, corresponding to $a=0$ and $b>0$, the solution grows at a rate which is faster than the polynomial
growth of \eqref{eq:stochpolygrowthintro} yet slower than the
exponential growth given by \eqref{eq:avXexp}.
\item In Theorem~\ref{thm:Bes}, corresponding to $a=0$ and $b<0$, the solution $X$ is recurrent on $\mathbb{R}$ and
its largest fluctuations are described by a result reminiscent of
the Law of the Iterated Logarithm.
\end{itemize}
In analysing the solution of the stochastic equation it is helpful
first to ask how the underlying deterministic equation behaves
asymptotically. This deterministic equation is attained from
\eqref{eq.stochintro} by letting $\sigma=0$. The solution of this
underlying equation (which corresponds to the mean of $X$) may be
expressed in terms of confluent hypergeometric, modified Bessel and
Bessel functions. Properties of these special functions are
well--documented, c.f. e.g. \cite{abramstegun, olver:asy, NIST}. An
associated differential resolvent may also be decomposed in terms of
these special functions. In
Theorems~\ref{cor:stochexp},~\ref{thm.bubbleaveexp} and
\ref{thm:modBes} the asymptotic behaviour of the solution $X$ may
then be shown to mirror that of the deterministic equations, i.e.
the asymptotic rates of growth or decay of the solutions of the
deterministic equations are preserved under the addition of a
stochastic perturbation. However, as
Theorem~\ref{thm.averagegaussian} demonstrates the stochastic
perturbation can for particular values of the parameters produce
asymptotic behaviour which is distinct from that of the solution of
the associated deterministic equation.
The analysis is achieved via this decomposition of the resolvent and
a variation of parameters formula.

Many of the asymptotic results concern pathwise behaviour. However,
many of the growth results also hold true in mean or in mean square.
Furthermore, in the main case where there are fluctuations (i.e.,
when $a<0$ and $a+b\leq 0$), we show that the autocovariance
function of the process $X$ decays at a polynomial rate in time,
i.e. for any fixed $t>0$,
\[
    \lim_{\Delta\to\infty}\frac{\gamma_t(\Delta)}{\Delta^{-1-\ba}} =c_t\in(0,\infty),
\]
where $\gamma_t(\cdot)=\Cov[X(t),X(t+\Delta)]$. Thus $X$ may be
viewed as possessing {\it long memory}, in the sense that for any
fixed $t$,
\begin{equation*}
    \int_0^\infty \gamma_{t,X}(\Delta)\,d\Delta=+\infty, \quad a<0, \quad b>0, \quad a+b<0.
\end{equation*}
This result is all the more striking as Theorem
\ref{thm.averagegaussian} proves that $X$ is asymptotically equal to
a process whose autocovariance function decays exponentially
quickly, i.e. a ``short memory`` process. Moreover, it can be shown
that $X$ is transiently non--stationary, and has limiting
autocovariance function equal to that of the stationary Ornstein
Uhlenbeck process to which it converges pathwise. We comment more on
the this result in the next section.
%

\subsection{Motivation for the work}
One of the motivations of this work is to develop a parameterised
stochastic functional differential equation whose asymptotic
behaviour is completely characterised, as such an equation can act
as a test equation for simulation methods for SFDEs. Another
mathematical motivation is to demonstrate that the general approach
of admissibility theory developed in \cite{App_Dan} can generate the
same results as the special function theory outlined here (at least
in some cases), thus supporting the conjecture that it can prove a
sharp tool in studying the asymptotic behaviour of linear,
quasilinear or affine stochastic functional differential equation.

However, one of the main interests in examining this equation is to
gain insight into some features of price dynamics in inefficient
financial markets. First, we argue that \eqref{eq.stochintro} may be
considered as a simple model of such a market.
Suppose that there is a class of technical analysts who compare the
current returns of a risky asset with the average of historical
returns. This leads to an instantaneous excess demand of
\[
\alpha\left(X_1(t)-\frac{1}{1+t}\int_{-1}^t X_1(s)\,\rd s\right)
\]
per unit time at time $t$. A class of feedback traders compare the
returns to a reference level $\bar{X}$, leading to an instantaneous
excess demand of
\[
\beta(X_1(t)-\bar{X})
\]
per unit time at time $t$. Unplanned demand by the traders arises
from "news", where the news in each period is independent of that in
previous periods. The contribution of this news to overall excess
demand is $\sigma(B(t_2)-B(t_1))$ over the time interval
$[t_1,t_2]$, where $B$ is a standard one--dimensional Brownian
motion. If we presume that returns respond linearly to the excess
demand of the market, then $X_1$ obeys the stochastic functional
differential equation
\[
\rd X_1(t)=\left( \alpha\left(X_1(t)-\frac{1}{1+t}\int_{-1}^t
X_1(s)\,\rd s\right) +\beta(X_1(t)-\bar{X})  \right)\,\rd t +
\sigma\, \rd B(t),
\]
for $t\geq 0$, where $X_1(t)=\psi_1(t)$ for $t\in [-1,0]$. The price
of the risky asset at time $t\geq 0$ is denoted by $S(t)$ and
defined by
\[
\rd S(t)=\mu S(t)\,\rd t + S(t)\,\rd X_1(t), \quad t\geq 0
\]
with $S(0)=s_0$. Now define $X(t)=X_1(t)-\bar{X}$ for $t\geq 0$ and
$\psi(t)=\psi_1(t)-\bar{X}$ for $t\in [-1,0]$. Then $X$ obeys
\eqref{eq.stochintro} with $a=\alpha+\beta$ and $b=-\alpha$.

Motivation and literature for such models, as well as alternative
inefficient market models may be found in~\cite{jamrcs10}, in which
a market with finite memory is considered. In common
with~\cite{jamrcs10}, in this work $X_1$ can grow to plus or minus
infinity, with both events being possible. In terms of the
mathematics, this happens if and only if
\begin{itemize}
\item  $a>0$;
\item $a<0$ and $a+b>0$;
\item $a=0$ and $b>0$.
\end{itemize}
From an economic perspective, the first case corresponds to the
situation where the feedback traders chase trends ($\alpha>0$) and
either dominate the fundamental investors, who have mean--reverting
expectations about price movements ($\alpha+\beta>0$, $\beta<0$) or
both classes of agents have trend chasing type expectations
($\alpha>0$, $\beta>0$). The other two cases, while interesting
mathematically, are less likely within the scope of the model: the
second case requires $\beta>0$, which implies that fundamental
investors are bullish about higher than average returns, but
$\alpha+\beta<0$, which indicates these investors dominate the
technical traders, who now have mean reverting expectations about
returns. Nonetheless, this case serves to demonstrate that if at
least one of the investor classes believes that high and rising
returns are a signal of higher returns in the future, and that that
class of agent dominates, then bubbles are likely outcomes. The
third case occurs if the two classes of traders have equal strength,
 ($\beta+\alpha=0$), with the technical traders having mean reverting expectations, and the fundamental investors being bullish about
 higher than average returns ($\alpha<0$, $\beta>0$).

In all these cases, the limiting random variable is path dependent,
so it follows that the initial behaviour of the market determines
whether there is a bubble or a crash. This picture is consistent
with the mechanism proposed for the formation of bubbles with those
formed in models of mimetic contagion, first introduced by
Orl\'ean~\cite{orlean}.

From a modelling and time series perspective, the behaviour in the
''non--bubble'' case when $a<0$ and $a+b\leq 0$, or $a=0$ and $b<0$
is also of interest. The former corresponds to the situation where
$\alpha+\beta<0$, $\beta\leq 0$, in which the fundamental investors
have mean reverting expectations, and either dominate the technical
investors (if they have trend chasing expectations) or the technical
traders also have mean reverting expectations themselves.
In this case as we observed the size of the largest fluctuations of
the process is given by $\sigma/\sqrt{2|\alpha+\beta|}$. Thus as the
process is actually mean reverting in this scenario it is in the
interests of the trend chasing traders to ensure that $\alpha+\beta$
is as close to zero as possible so that the the process undergoes as
large fluctuations as possible. This phenomenon is observed in
financial markets, i.e. when there is a large proportion of
uninformed investors in a market then the volatility of the market
tends to be higher than in their absence c.f. e.g. De Long et al. \cite{delong}.  
If however the uninformed investors where to force $\alpha+\beta>0$ then this,
as already observed, will result in the formation of an
uncontrollable bubble.

The case when $a=0$ and $b<0$ is consistent with solutions obeying
the law of the iterated logarithm, and so may be roughly associated
with Gaussian processes that are non--stationary, but possess
stationary increments. However, in the former case, not only (as we
have already pointed out) is $X$ is asymptotically indistinguishable
from an asymptotically stationary process, it can be shown that $X$
itself is asymptotically stationary (or {\it transiently
non--stationary}) ,i.e.
\[
    \lim_{t\to\infty}\text{Cov}(X(t),X(t+\Delta)) = \gamma(\Delta),
\]
for some function $\gamma:\R\to\R$. Moreover this limiting
autocovariance, as a function of $\Delta$, decays exponentially and
so is indicative of a short memory process. At the same time, we
have already seen that when $t$ is fixed and $\Delta\to \infty$,
then $\Delta \to \text{Cov}(X(t),X(t+\Delta))$ tends to zero at a
polynomial rate, and is indeed non--integrable when $b>0$. In a
sense therefore, the process exhibits ``long--memory'' and
``short--memory'' characteristics. Of course, it is not unheard of
that reversing the order of these limits leads to different answers,
and while this is an interesting mathematical example of this
phenomenon, it is otherwise not noteworthy. However, given that
there is considerable debate among empiricists in finance concerning
the presence or absence of long memory in certain financial time
series, it is interesting to note that this paper presents an
asymptotically stationary process in a (highly simplified, indeed
unrealistic) market model, which also possesses somewhat ambiguous
memory properties.

\subsection{Organisation of the paper and mathematical preliminaries}\label{sb:avprelim}
The article is organised as follows. In this section
(Section~\ref{sb:avprelim}), we formally introduce the equation
under scrutiny and define some notation. Section~\ref{sec.specialfn}
gives a detailed description of the decomposition of the solution of
the deterministic equation into special functions, and in particular
details the differing functions which are used depending on the
values of $a$ and $b$. In order to make our presentation
self--contained, various properties of these functions which are
needed in the analysis of the asymptotic behaviour, are listed.
Section~\ref{sect:rec} deals with recurrent dynamics of $X$, with
Subsection~\ref{sect:recpath} giving results on the almost sure
pathwise asymptotic behaviour of the process, while
Subsection~\ref{sect:recauto} discusses the memory properties when
$X$ has these recurrent dynamics. Section~\ref{sect:trans} gives
results concerning transient dynamical behaviour of the process.
Proofs of the results are deferred to
Section~\ref{sect:avcntsproof}.

Let $\mathbb{R}$ denote the real numbers. If $x\in\mathbb{R}$, then
$\lceil x\rceil$, or the \emph{ceiling} of $x$ is the smallest
integer greater than or equal to $x\in\mathbb{R}$. The Wronskian for
two any functions $x_1$ and $x_2$, is defined as $\mathcal{W}(t) =
x_1(t)x_2'(t)-x_1'(t)x_2(t)$. We define the Gamma function
$\Gamma:\mathbb{C}\to\mathbb{C}$ according to
$\Gamma(z)=\int_{0}^{\infty}s^{z-1}\e^{-s}\rd s$  for $\Re(z)>0$ .
When $\Re(z)\leq0$, $\Gamma(z)$ is defined by analytic continuation.
We employ the standard Landau notation: if
$f:\mathbb{C}\to\mathbb{C}$ and $g:\mathbb{C}\to\mathbb{R}$, we
write $f=O(g)$ as $|z|\to\infty$ if there exist $z_0>0$ and $M>0$
such that $|f(z)|\leq M|g(z)|$ for all $|z|>z_0$. while $f\sim g$ as
$z\to z_0$ is equivalent to $\lim_{z\to z_0}f(z)/g(z)=1$.

Let us fix a complete probability space
$(\Omega,\mathcal{F},\mathbb{P})$ with a filtration
$\{\mathcal{F}(t)\}_{t\geq 0}$ satisfying the usual conditions and
let $B=\{B(t):\,t\geq 0\}$ be a one--dimensional Brownian motion
adapted to $\{\mathcal{F}(t)\}_{t\geq 0}$ on this space. The
probability measure induces an expectation $\mathbb{E}$ in the usual
manner, in the sense that if $Y$ is an $\mathcal{F}$--measurable
random variable such that $\int_{\Omega} |Y(\omega)| \,\rd
\mathbb{P}\{\omega\}<+\infty$, then $\mathbb{E}[Y]=\int_{\Omega}
Y(\omega) \,\rd \mathbb{P}\{\omega\}$. In this paper, the
abbreviation \emph{a.s.} stands for ``almost sure'' or ``almost
surely''.

We consider the affine scalar stochastic functional differential
equation with an average functional
\begin{subequations}\label{eq.longmemorysdde}
\begin{gather} \label{eq.longmemorysddeeq}
dX(t)=\left(aX(t)+b\frac{1}{1+t}\int_{-1}^t X(s)\,ds\right)\,dt + \sigma\,dB(t), \quad t\geq0;\\
\label{eq.longmemorysddeic} X(t)=\psi(t), \quad t\in[-1,0],
\end{gather}
\end{subequations}
Here $\sigma>0$, $a$, $b\in\mathbb{R}$ and $\psi\in
C([-1,0],\mathbb{R})$. Then by Berger and Mizel~\cite{BergMiz} or
Mao~\cite[Theorem~2.3.1]{Mao2} there is a unique continuous adapted
process which obeys  \eqref{eq.longmemorysdde}, hereinafter referred
to as the \emph{solution} of \eqref{eq.longmemorysdde} and denoted
$X$. There is also a unique continuous solution of
    \begin{subequations} \label{eq.detaverage}
    \begin{gather} \label{eq.detaveragedde}
        x'(t) = a x(t) + b\frac{1}{1+t} \int_{-1}^t x(s)\,ds,\quad t\geq 0, \\
        \label{eq.detaverageinitcondn}
        x(t)  =  \psi(t),\quad t \in [-1,0].
    \end{gather}
    \end{subequations}
The differential resolvent $r$ associated with \eqref{eq.detaverage}
is defined according to
\begin{subequations} \label{eq.resolv}
\begin{gather} \label{eq.resolveq}
\frac{\partial r}{\partial t}(t,s)=a\,r(t,s)+b\frac{1}{1+t}\int_{s}^t r(u,s)\,du, \quad t>s; \\
\label{eq.resolvic} r(t,s)=0, \quad t<s; \quad r(s,s)=1.
\end{gather}
\end{subequations}
Then with $x$ being the solution of \eqref{eq.detaverage}, the
solution of \eqref{eq.longmemorysdde} has a variation of parameters
representation.
\begin{lemma}\label{lm:avXsol}
Suppose that  $\psi\in C([-1,0];\mathbb{R})$. Let $X$ be the unique
solution of \eqref{eq.longmemorysdde}, $x$ the unique solution of
\eqref{eq.detaverage} and $r$ the unique solution of
\eqref{eq.resolv}. Then $X$ is a Gaussian process and obeys
\begin{equation} \label{eq.xrep}
X(t)=x(t)+\sigma \int_0^t r(t,s)\,dB(s), \quad t\geq 0.
\end{equation}
\end{lemma}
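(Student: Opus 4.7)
The plan is to define the candidate process
\[
Z(t) := x(t) + \sigma \int_0^t r(t,s)\,dB(s), \quad t \geq 0,
\]
(with $Z(t)=\psi(t)$ for $t\in[-1,0]$) and show it satisfies the SFDE \eqref{eq.longmemorysdde}. The conclusion then follows from the pathwise uniqueness of solutions of \eqref{eq.longmemorysdde} guaranteed by Berger--Mizel or Mao, and Gaussianity is immediate since $x$ is deterministic and $s \mapsto r(t,s)$ is deterministic so $\int_0^t r(t,s)\,dB(s)$ is a Wiener integral against Brownian motion, making the finite-dimensional distributions of $Z$ jointly Gaussian.

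The main technical step is computing the differential $dU(t)$ where $U(t) := \int_0^t r(t,s)\,dB(s)$. Since $r(s,s)=1$ and $r$ is $C^1$ in $t$ on $\{t>s\}$ by \eqref{eq.resolveq}, a standard differentiation-under-the-stochastic-integral argument gives
\[
dU(t) = r(t,t)\,dB(t) + \left(\int_0^t \frac{\partial r}{\partial t}(t,s)\,dB(s)\right)dt = dB(t) + \left(\int_0^t \frac{\partial r}{\partial t}(t,s)\,dB(s)\right)dt.
\]
Substituting the resolvent equation \eqref{eq.resolveq} into the drift term and then applying the stochastic Fubini theorem to swap the order of integration in
\[
\int_0^t \int_s^t r(u,s)\,du\,dB(s) = \int_0^t \int_0^u r(u,s)\,dB(s)\,du = \int_0^t U(u)\,du,
\]
one obtains
\[
dU(t) = \left(aU(t) + \frac{b}{1+t}\int_0^t U(s)\,ds\right)dt + dB(t).
\]

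To complete the verification, I would combine this with the deterministic equation \eqref{eq.detaveragedde} satisfied by $x$. Since $Z(s) = \psi(s) = x(s)$ on $[-1,0]$ and $Z(s) = x(s) + \sigma U(s)$ on $[0,t]$,
\[
\int_{-1}^t Z(s)\,ds = \int_{-1}^t x(s)\,ds + \sigma \int_0^t U(s)\,ds.
\]
Adding $dx(t)$ and $\sigma\,dU(t)$ and grouping the instantaneous and averaged terms then yields exactly
\[
dZ(t) = \left(aZ(t) + \frac{b}{1+t}\int_{-1}^t Z(s)\,ds\right)dt + \sigma\,dB(t),
\]
with $Z|_{[-1,0]} = \psi$, so by uniqueness $Z \equiv X$ a.s.

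The only genuine subtlety is justifying the interchange of the $\partial/\partial t$ and $dB(s)$ operations and the application of stochastic Fubini; both require that $r$ and $\partial r/\partial t$ be bounded on compact $t$-intervals uniformly in $s$, which I would read off from the integral-equation form of \eqref{eq.resolv} by a Gronwall-type estimate. Everything else is bookkeeping, so I expect this integrability check to be the only real step where care is needed.
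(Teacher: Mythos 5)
Your proposal is correct and follows essentially the same route as the paper: verify that the variation-of-constants candidate satisfies the equation via the stochastic Fubini theorem, then invoke pathwise uniqueness. The only presentational difference is that the paper works with $Z=X-x$ and checks the \emph{integrated} form of the equation directly, which sidesteps your "differentiation under the stochastic integral" step (note $U(t)=\int_0^t r(t,s)\,dB(s)$ is not a semimartingale, so that differential identity is itself only justified by the very same stochastic Fubini computation in integrated form); your integrability remark at the end correctly identifies what is needed to make that step rigorous.
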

A proof of the validity of this representation is provided in
Section~\ref{sect:avproof}.

Using the representation \eqref{eq.xrep} for $X$, we deduce formulae
for the mean and autocovariance of $X$. By considering for $t\geq 0$
fixed and $\tau\geq 0$ the process
\[
M(\tau)=\int_0^\tau r(t,s)\,dB(s), \quad \tau \geq 0,
\]
we can see that $M$ is a martingale and moreover a Gaussian process,
so therefore $X(t)=x(t)+M(t)$ is Gaussian distributed. Since
$\mathbb{E}[M(\tau)^2]<+\infty$ for all $\tau\geq 0$, we have that
$\mathbb{E}[M(\tau)]=0$ for all $\tau\geq 0$, and hence
$\mathbb{E}[M(t)]=0$. Hence
\begin{equation}\label{eq.xmean}
\mathbb{E}[X(t)]=x(t), \quad t\geq 0.
\end{equation}
Since $\mathbb{E}[X(t)^2]$ is finite for all $t\geq 0$, it follows
that $\text{Cov}(X(t),X(t+\Delta))$ is well--defined for all $t\geq
0$ and $\Delta\geq 0$. We also see that
\begin{align*}
\text{Cov}(X(t),X(t+\Delta))&=
\sigma^2\mathbb{E}[M(t)M(t+\Delta)]\\
&=\sigma^2\mathbb{E}[\int_0^{t+\Delta} r(t,s)\chi_{[0,t]}(s)\,dB(s)
\int_0^{t+\Delta} r(t+\Delta,s)\,dB(s)].
\end{align*}
Considering $t$ and $\Delta$ as fixed, we may apply It\^{o}'s
isometry to obtain the variance of
\begin{multline*}
V_1:=\int_0^{t+\Delta} r(t+\Delta,s)\,dB(s), \quad V_2:=\int_0^{t+\Delta} r(t,s)\chi_{[0,t]}(s)\,dB(s) \quad \text{and}\\
\int_0^{t+\Delta}
\{r(t,s)\chi_{[0,t]}+r(t+\Delta,s)\}\,dB(s)=V_1+V_2,
\end{multline*}
and using the fact that
$2\text{Cov}(V_1,V_2)=\text{Var}[V_1+V_2]-\text{Var}[V_1]-\text{Var}[V_2]$,
we obtain
\begin{equation} \label{eq.autocovarianceaverage}
\Cov(X(t),X(t+\Delta))=\sigma^2 \int_0^t r(t,s)r(t+\Delta,s)\,ds,
\quad t\geq 0, \quad \Delta\geq 0.
\end{equation}

We have already seen that mean and resolvent obey functional
differential equations involving an average functional. This also
holds true for the autocovariance function, and the result is
recorded below.
\begin{proposition}\label{lm:acf}
Suppose that  $\psi\in C([-1,0];\mathbb{R})$. Let $X$ be the unique
solution of \eqref{eq.longmemorysdde} and $r$ the unique solution of
\eqref{eq.resolv}. Fix $t\geq0$ and define
    \begin{equation}\label{eq.acfdef}
        \g_{t}(\dl):=\sigma^2\int_{0}^{t}r(t,s)r(t+\dl,s) \, ds, \quad \dl\geq-t.
    \end{equation}
    If $\dl\geq0$, then $\g_{t}(\dl)=\Cov(X(t),X(t+\dl))$
    and, 
    \begin{align}\label{eq.acfdde}
        \g_{t}'(\dl)&=a\g_{t}(\dl) + \frac{b}{1+t+\dl}\int_{-t}^{\dl}\g_{t}(w)\,dw, \quad \dl\geq0, \\
      \g_{t}'(\dl)&=a\g_{t}(\dl) + \frac{b}{1+t+\dl}\int_{-t}^{\dl}\g_{t}(w)\,dw + \sigma^2r(t,t+\Delta), \quad -t\leq\dl<0.
      \label{eq.acfdde2}
    \end{align}
\end{proposition}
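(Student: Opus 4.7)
The identification $\gamma_t(\Delta) = \operatorname{Cov}(X(t), X(t+\Delta))$ for $\Delta \geq 0$ is essentially already in hand: this is exactly the content of display (\ref{eq.autocovarianceaverage}), which was derived above via It\^o's isometry applied to the Gaussian representation (\ref{eq.xrep}). So the first statement requires only a brief citation of that computation.

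For the integro-differential equations, the plan is to differentiate the defining integral (\ref{eq.acfdef}) in $\Delta$, substitute the resolvent equation (\ref{eq.resolveq}) for $\partial r(t+\Delta,s)/\partial \Delta$, and then apply Fubini to recognise the inner integral as $\int_{-t}^\Delta \gamma_t(w)\,dw$. In the case $\Delta \geq 0$ the integrand in (\ref{eq.acfdef}) is supported on $s \in [0,t]$ with $s \leq t \leq t+\Delta$, so (\ref{eq.resolveq}) applies pointwise and differentiation under the integral gives
\[
\gamma_t'(\Delta) = a\gamma_t(\Delta) + \sigma^2 \frac{b}{1+t+\Delta} \int_0^t r(t,s) \int_s^{t+\Delta} r(u,s)\,du\,ds.
\]
Using $r(u,s) = 0$ for $u < s$, the inner integral can be extended to $[0,t+\Delta]$; Fubini and the substitution $w = u - t$ then convert the double integral into $\sigma^{-2}\int_{-t}^\Delta \gamma_t(w)\,dw$, yielding (\ref{eq.acfdde}). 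The interchanges are routine since $r$ is continuous on the (closed, bounded) region of integration.

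For $-t \leq \Delta < 0$, the same idea works but one must track the support carefully: since $r(t+\Delta,s) = 0$ for $s > t+\Delta$, the integral (\ref{eq.acfdef}) is effectively $\sigma^2 \int_0^{t+\Delta} r(t,s)r(t+\Delta,s)\,ds$. Differentiating in $\Delta$ produces a boundary term from the moving upper limit and an interior term. The boundary term equals $\sigma^2 r(t,t+\Delta) \cdot r(t+\Delta,t+\Delta) = \sigma^2 r(t,t+\Delta)$ by the initial condition (\ref{eq.resolvic}). The interior term is handled exactly as before, except that after the change of variables $w = u-t$ the variable $u$ ranges over $[0,t+\Delta]$, i.e.\ $w$ over $[-t,\Delta]$, and one needs the identity $\sigma^2 \int_0^{t+\Delta} r(t,s) r(u,s)\,ds = \gamma_t(u-t)$ for $u \in [0,t+\Delta]$, which follows because $r(u,s) = 0$ for $s > u$ truncates the $\sigma^2 \int_0^t$ integral defining $\gamma_t$ down to $\int_0^u$, and $u \leq t+\Delta$.

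The only genuinely delicate point is the bookkeeping for $\Delta < 0$: one must correctly identify the effective domain of integration imposed by the support conditions on $r$, recognise that the boundary contribution arises from the moving upper limit $t+\Delta$ and not from the fixed endpoint $0$, and verify that after the change of variables the reduced integral still represents $\gamma_t$ faithfully. Once these support/truncation issues are unwound, both identities (\ref{eq.acfdde}) and (\ref{eq.acfdde2}) drop out of the same Fubini computation.
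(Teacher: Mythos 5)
Your proposal is correct and follows essentially the same route as the paper: differentiate \eqref{eq.acfdef} under the integral sign, substitute the resolvent equation \eqref{eq.resolveq}, and use Fubini, the substitution $w=u-t$, and the support properties $r(u,s)=0$ for $u<s$ to identify the inner integral with $\int_{-t}^{\Delta}\gamma_t(w)\,dw$, with the moving upper limit $t+\Delta$ producing the extra term $\sigma^2 r(t,t+\Delta)$ when $\Delta<0$. Your bookkeeping (extending the inner integral and applying Fubini directly) is in fact slightly tidier than the paper's, which splits the region and separately verifies that a residual double integral vanishes, but the argument is the same.
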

This result is proven in Section~\ref{sect:avcntsproof}. The
differential equation \eqref{eq.acfdde} may be thought of as a
Yule--Walker--type representation of the autocovariance function.

In this work, we could equally have studied the equation
\[
dX(t)=\left(aX(t)+b\frac{1}{t}\int_0^t X(s)\,ds\right)\,dt +
\sigma\,dB(t), \quad t\geq 0; \quad X(0)=\xi.
\]
However, this equation is more delicate to analyse, on account of
the potential singularity in the average functional at $t=0$. We
obviate such complications by considering an equation with an
initial history on a non--trivial compact interval. Taking this to
be  $[-1,0]$ leads to \eqref{eq.longmemorysdde}.


\section{Formulae and Asymptotic Behaviour of  Solutions of \eqref{eq.detaverage} and \eqref{eq.resolv}} \label{sec.specialfn}
The solution of \eqref{eq.detaverage} can be rewritten as the
solution of an initial value problems for a second--order
differential equation. The equation is
\begin{subequations} \label{eq:secondorderplusic}
\begin{gather}\label{eq:secondorder}
x''(t)+\left(\frac{1}{1+t}-a\right)x'(t)-\frac{a+b}{1+t}x(t)=0, \quad t\geq0; \\
\label{eq:secondorderic} x(0)=\psi(0), \quad
x'(0)=a\psi(0)+b\int_{-1}^0\psi(s)\,\rd s.
\end{gather}
\end{subequations}
There are three cases to consider: $a<0$, $a>0$ and $a=0$. We
discuss each case and their subcases, conditioned by $b$, in turn.
In the case when $b=0$, the stochastic differential equation
\eqref{eq.longmemorysdde} reduces to an Ornstein--Uhlenbeck SDE, and
so the behaviour of $x$, $r$, and indeed $X$, are well--understood.
Therefore, we exclude the case $b=0$ from our analysis.
In the exposition below the asymptotic behaviour of the solution of (6.2.1) is deduced from
the known asymptotic behaviour of certain functions. It is here observed however that a general
theory concerning the asymptotic behaviour of linear second order equations with analytic
coefficients may be found in e.g. \cite[Ch. 7.1 and 7.2]{olver:asy}.


\subsection{$a<0$}
When $a<0$, the solution of \eqref{eq:secondorderplusic} can be
expressed in terms of two linearly independent confluent
hypergeometric functions, according to:
\begin{equation}\label{eq:detspec1}
        x(t) = c_{1}r_{1}(t) + c_{2}r_{2}(t) \quad \text{ for $a<0$ and $b/a\not\in\{1,2,...\}$} \\
\end{equation}
where
\begin{align*}
    r_{1}(t) = \e^{at}U(-b/a,1,-a(1+t)), \quad r_{2}(t) = \e^{at}M(-b/a,1,-a(1+t)).
\end{align*}
Here $U(\alpha,\beta,\cdot)$ and $M(\alpha,\beta,\cdot)$ are two
linearly independent solutions of \emph{Kummer's differential
equation} which is given by
\[
    z w''(z) + (\beta-z)w'(z) - \alpha w(z)=0,
\]
where $\alpha$ and $\beta$ are real and $z$ a complex number. $M$ is
sometimes referred to as Kummer's function (of the first kind) or a
\emph{confluent hypergeometric function}, while $U$ is sometimes
called the Tricomi confluent hypergeometric function. See
\cite[Chapter 13.2.1]{NIST} and following sections.

To see that $r_1$ and $r_2$ are solutions of \eqref{eq:secondorder},
observe that as $z\mapsto U(\alpha,\beta,z)$ is a solution of
Kummer's equation then $t\mapsto U(-b/a,1,-a(1+t))$ satisfies
\begin{align*}
    &-a(1+t)U''(-b/a,1,-a(1+t)) + (1+a(1+t))U'(-b/a,1,-a(1+t)) \\
    &\qquad+ \ba U(-b/a,1,-a(1+t)) =0.
\end{align*}
Therefore
\begin{align*}
r_1''&(t) + \left(\frac{1}{1+t}-a\right)r'(t) -\frac{a+b}{1+t}r_1(t) \\
&= a^2\e^{at}U''(-\ba,1,-a(1+t)) - 2a^2\e^{at}U'(-\ba,1,-a(1+t)) \\
&\qquad + a^2\e^{at}U(-\ba,1,-a(1+t)) - \frac{a+b}{1+t}U(-\ba,1,-a(1+t)) \\
&\qquad+ \left(\frac{1}{1+t}-a\right)\left(-a\e^{at}U'(-\ba,1,-a(1+t))+a\e^{at}U(-\ba,1,-a(1+t))\right) \\
&=\frac{-a\,\e^{at}}{1+t}\biggl[ -a(1+t)U''(-\ba,1,-a(1+t)) + (1+a(1+t))U'(-\ba,1,-a(1+t)) \\
&\qquad + \ba U(-\ba,1,-a(1+t)) \biggr] = 0,
\end{align*}
as required. A similar calculation shows that $r_2$ is a solution of
\eqref{eq:secondorder}.

As we are chiefly interested in the long--run behaviour of $X$ it is
necessary to have information on the asymptotic behaviour of both
$U$ and $M$. This is given by \cite[13.1.4 \& 13.1.8]{abramstegun},
or
\begin{subequations}\label{eq:chg3}
\begin{align}\label{eq:chg3a}
    M(\alpha,\beta,t) &= \frac{\Gamma(\beta)}{\Gamma(\alpha)}\e^{t}t^{\alpha-\beta}[1+O(t^{-1})], \quad \text{as $t\to\infty$,} \\
    U(\alpha,\beta,t) &= t^{-\alpha}[1+O(t^{-1})], \quad \text{as $t\to\infty$}. \label{eq:chg3b}
\end{align}
\end{subequations}
This immediately gives asymptotic information about $r_1$ and $r_2$:
\begin{align} \label{eq.r1asy}
r_1(t)&\sim  e^{at} |a|^{b/a}t^{b/a}, \text{ as $t\to\infty$},
\\
r_2(t)& \sim \frac{1}{\Gamma(-b/a)} e^{-a} |a|^{-b/a-1} t^{-b/a-1},
\text{ as $t\to\infty$} \label{eq.r2asy}
\end{align}

To determine the asymptotic behaviour of $x$, we need values for
$c_1$ and $c_2$ in \eqref{eq:detspec1} in terms of the initial
conditions of \eqref{eq:secondorder}. As usual, by using
\eqref{eq:secondorderic}, these values are obtained by solving
\begin{equation} \label{eq.c1c2eqn}
    c_1 r_1(0) + c_2 r_2(0) = \psi(0), \quad c_1 r_1'(0) + c_2 r_2'(0)= a\psi(0)+b\int_{-1}^0\psi(s)\,\rd s.
\end{equation}
Clearly, these values can be expressed in terms of the Wronskian of
$r_1$ and $r_2$, evaluated at $t=0$, as well as the derivatives of
$r_1$ and $r_2$. Since $r_1$ and $r_2$ depend on $M$ and $U$, it is
of value to have a general formula for the Wronskian and the
derivatives of $U$ and $M$. A formula for the Wronskian, $W$, of $M$
and $U$ is given by \cite[13.2.34]{NIST}:
\begin{equation}\label{eq:chg4}
    W\{M(\alpha,\beta,z),U(\alpha,\beta,z)\}= -\Gamma(\beta)z^{-\beta}\e^{z}/\Gamma(\alpha).
\end{equation}
Expressions for the derivatives of $U$ and $M$ are given by
\cite[13.3.15 \& 13.3.22]{NIST}:
\begin{align}\label{eq:dMU}
    M'(\alpha,\beta,z) = \frac{\alpha}{\beta}M(\alpha+1,\beta+1,z), \quad   U'(\alpha,\beta,z) = -\alpha U(\alpha+1,\beta+1,z).
\end{align}
Using these results, we obtain the following formulae for $c_1$ and
$c_2$:
\begin{align}
    c_{1}&= \Gamma(-\ba)\e^{a} b\left( \psi(0)M(1-\ba,2,-a) - \int_{-1}^{0}\psi(s) \rd s \, M(-\ba,1,-a) \right), \nonumber\\
\label{eq.c2}
    c_{2}&= \Gamma(-\ba)\e^{a} b\left( \psi(0)U(1-\ba,2,-a) + \int_{-1}^{0}\psi(s) \rd s \, U(-\ba,1,-a) \right).
\end{align}


We now consider the case when $b/a\in\{1,2,\dots\}$. As alluded to
earlier, in this case
    $t\mapsto M(-b/a,1,-a(1+t))$ and $t\mapsto U(-b/a,1,-a(1+t))$ are linearly dependent, and therefore the representation \eqref{eq:detspec1} for $x$ is not valid. It is however known that $t\mapsto U(-b/a,1,-a(1+t))$ is a polynomial in $|a|(1+t)$ of degree $b/a$.
We even have an explicit formula for this polynomial. Indeed, for
$n\in\{0,1,2,...\}$, we have from \cite[13.2.7]{NIST} that
\begin{equation}\label{eq:chg5}
    U(-n,1,z) = (-1)^{n}\sum_{j=0}^{n}\frac{(n!)^2}{(n-j)!(j!)^2}(-z)^j.
\end{equation}
Note that $z\mapsto U(-n,1,z)$ is analytic, and so its  (at most
$b/a$) zeros are isolated. Therefore, the zeros of the real--valued
polynomial $t\mapsto U(-b/a,1,-a(1+t))$ are also isolated.

Suppose now we take $r_1(t)=e^{at}U(-b/a,1,-a(1+t))$ for $t\geq 0$.
 We know from standard theory (cf., e.g.~\cite{BoyceDiPrima})
that there exists a second solution, $\tilde{r}_2$, of
\eqref{eq:secondorder} which is linearly independent of $r_1$.
Next, 
 by Abel's Theorem (cf., e.g. \cite[Ch.3.3.2]{BoyceDiPrima}), the Wronskian of $r_1$ and $\tilde{r}_2$, which is  associated with
 \eqref{eq:secondorder} obeys
 \[
    \mathcal{W}(a,b,t) = 
    \mathcal{W}(a,b,0)\e^{at}(1+t)^{-1}, \quad t\geq 0,
 \]
 where $\mathcal{W}(a,b,0)=r_1(0)\tilde{r}_2'(0)-r_1'(0)\tilde{r}_2'(0)\neq 0$.
%

This expression is equivalent to
\[
    r_1(t)\tilde{r}'_2(t) - r'_1(t)\tilde{r}_2(t) = \mathcal{W}(a,b,0)\e^{at}(1+t)^{-1}, \quad t\geq0.
\]
We now wish to find a representation for $\tilde{r}_2$ which allows
us to deduce its asymptotic properties.

Notice that because $r_1$ has finitely many zeros, it must have a
maximal real zero. Let
$t_1=1+\max(0,\sup\{t\in\mathbb{R}:r_1(t)=0\})$, where we define
$\sup\{t\in\mathbb{R}:r_1(t)=0\}=-\infty$ if $r_1(t)\neq 0$ for all
$t\geq 0$. Then for $t\geq t_1$ we have
\begin{equation} \label{eq.r2firstorder}
    \tilde{r}'_2(t) - \frac{r'_1(t)}{r_1(t)}\tilde{r}_2(t) = \mathcal{W}(a,b,0)\frac{\e^{at}(1+t)^{-1}}{r_1(t)}, \quad t\geq t_1.
\end{equation}
Since $r_1(t)\neq 0$ for all $t\geq t_1$, we have that $t\mapsto
r_1'(t)/r_1(t)$ and $t\mapsto e^{at}(1+t)^{-1}/r_1(t)$ are
continuous on $[t_1,\infty)$, and therefore we may solve
\eqref{eq.r2firstorder} for $\tilde{r}_2$ to obtain the following
representation for $\tilde{r}_2$ on $[t_1,\infty)$:
 \begin{equation}\label{eq:linr2}
    \tilde{r}_{2}(t) = r_{1}(t) \frac{\tilde{r}_2(t_1)}{r_1(t_1)}
    + \mathcal{W}(a,b,0)r_{1}(t)\int_{t_1}^{t}\frac{\e^{as}(1+s)^{-1}}{r_{1}^{2}(s)} \rd s,
    \quad t\geq t_1.
\end{equation}
Since $t_1$ exceeds the maximal zero of $r_1$, the integral on the
right hand side of \eqref{eq:linr2} is well--defined
 for $t\geq t_1$.
Moreover, using l'H\^{o}pital's rule together with \eqref{eq:chg3b}
or \eqref{eq:chg5}, one may show that
\begin{equation}\label{eq:secr2}
    \lim_{t\to\infty} t^{1+\ba}\tilde{r}_2(t) = \mathcal{W}(a,b,0) |a|^{-1-\ba}, \quad a<0, \quad -\ba\in \{1,2,\ldots\}.
\end{equation}
Note that this recovers the asymptotic behaviour of $r_2$ above in
\eqref{eq.r2asy} the case $a<0$ and $b/a\not\in \{1,2,\ldots\}$.

It is also useful to determine some asymptotic information about
$\tilde{r}_2'$. Notice that $r_1(t)\sim e^{at}t^{b/a}|a|^{b/a}$ as
$t\to\infty$. Also we have
\[
\frac{r_1'(t)}{r_1(t)}-a =
\frac{r_1'(t)-ar_1(t)}{r_1(t)}=\frac{-aU'(-b/a,1,-a(1+t))}{U(-b/a,1,-a(1+t))},
\quad t\geq t_1,
\]
so using the fact that $t\mapsto U(-b/a,1,-a(1+t))$ is a polynomial
of degree $b/a\in \mathbb{N}$, we have that $\lim_{t\to\infty}
r_1'(t)/r_1(t)=a$. By \eqref{eq:secr2}, it follows that there is
$t_1'>0$ such that $\tilde{r}_2(t)\neq 0$ for all $t\geq t_1'$. Let
$t_1''=\max(t_1',t_1)$. Then we may rewrite \eqref{eq.r2firstorder}
for $t\geq t_1''$ to get
\begin{equation*}
    \frac{\tilde{r}'_2(t)}{\tilde{r}_2(t)} = \frac{r'_1(t)}{r_1(t)}
    + \mathcal{W}(a,b,0)\frac{\e^{at}(1+t)^{-1}}{r_1(t)\tilde{r}_2(t)}.
\end{equation*}
Using the fact that $r_1(t)\sim e^{at}t^{b/a}|a|^{b/a}$ as
$t\to\infty$ together with \eqref{eq:secr2} shows that the second
term has limit $|a|=-a$, and therefore
\begin{equation} \label{eq.r2tilasy}
\lim_{t\to\infty} \frac{\tilde{r}'_2(t)}{\tilde{r}_2(t)}=0.
\end{equation}
Finally, we see that the solution of \eqref{eq:secondorderplusic} is
given by
\begin{equation}\label{eq:detspec2}
    x(t)=\tilde{c}_{1} r_{1}(t) + \tilde{c}_{2} \tilde{r}_{2}(t), \quad t\geq0, \quad \text{ for $a<0$ and $b/a\in\{1,2,...\}$}
\end{equation}
where $\tilde{c}_1$ and $\tilde{c}_2$ are found using
\eqref{eq:secondorderic}.
Note that $\tilde{c}_2$ is known entirely in terms of $r_1$ and its
dependence on $\tilde{r}_2$ is solely through the value of the
Wronskian, because
\[
    \tilde{c}_2 =
    \frac{1}{\mathcal{W}(a,b,0)} \left( b\psi(0)\, U(1-\ba,2,|a|) + b\int_{-1}^{0}\psi(s)\,ds\, U(-\ba,1,|a|) \right).
\]
Note also that for $b=0$, \eqref{eq:detspec2} reduces to $x(t) =
\psi(0)\e^{at}$.

We now turn our attention to the representation of the resolvent $r$
defined by \eqref{eq.resolv}. In a manner similar to the treatment
of the solution $x$ of \eqref{eq.detaverage}, it can be shown for
every fixed $s\geq 0$, the solution $t\mapsto r(t,s)=:r_s(t)$ of the
resolvent equation \eqref{eq.resolv} is also the solution of the
second order differential equation
\begin{equation} \label{eq.rs2ode}
r_s''(t)+\left(\frac{1}{1+t}-a\right)r_s'(t)-\frac{a+b}{1+t}r_s(t)=0,
\quad t\geq s,
\end{equation}
with initial conditions $r_s(s)=1$ and $r_s'(s)=a$. It is to be
noted that \eqref{eq.rs2ode} is the same differential equation as
\eqref{eq:secondorder} apart from the fact that the argument of the
solution is restricted to the interval $[s,\infty)$, a subinterval
of the interval of existence of the equation \eqref{eq:secondorder}.
Therefore, $r(t,s)=r_s(t)$ can be represented as a linear
combination of the linearly independent solutions of
\eqref{eq:secondorder} according to
\begin{equation} \label{eq:resdecomp}
    r(t,s) =
    \begin{cases}
        d_1(s)r_{1}(t) + d_2(s)r_{2}(t), &\quad t\geq s\geq 0, \quad a<0, \quad b/a\not\in\{1,2...\}, \\
        \tilde{d}_1(s)r_{1}(t) + \tilde{d}_2(s)\tilde{r}_{2}(t), &\quad t\geq s\geq 0, \quad a<0, \quad b/a\in\{1,2...\}.
    \end{cases}
\end{equation}
The multipliers $d_1$, $d_2$ etc are $s$--dependent, because initial
data for the problem \eqref{eq.rs2ode} is specified at $s$.
Considering first the non--degenerate case when
$b/a\not\in\{1,2...\}$, it can be seen that expressions for the
coefficients $d_1$ and $d_2$ are obtained from the initial
conditions \eqref{eq.resolvic} and \eqref{eq:chg4}, i.e.
\begin{equation}   \label{eq.d1d2ic}
    d_1(s)r_1(s) + d_2(s)r_2(s)= 1, \quad d_1(s) r_1'(s) + d_2(s) r_2'(s) = a.
\end{equation}
From these equations, and using \eqref{eq:chg4} and \eqref{eq:dMU},
we obtain the formulae
\begin{align}
    d_{1}(s) &= \Gamma(-\ba)(1+s)\e^{a} b \, M(1-\ba,2,-a(1+s)), \label{eq:d1} \\
    d_{2}(s) &= \Gamma(-\ba)(1+s)\e^{a} b \, U(1-\ba,2,-a(1+s)) \label{eq:d2}.
\end{align}
Using the fact that $\Gamma(1-b/a)=-b/a \Gamma(-b/a)$ and employing
\eqref{eq:chg3}, we get
\begin{align}
 d_{1}(s) &\sim  b \, \frac{\Gamma(-\ba)}{\Gamma(1-\ba)} |a|^{-1-\ba}  e^{-as} s^{-\ba}
 =  
 |a|^{-\ba}  e^{-as} s^{-\ba}, \text{ as $s\to\infty$},
 \label{eq:d1asy} \\
 \label{eq:d2asy}
    d_{2}(s) &\sim \Gamma(-\ba)\e^{a} b |a|^{\ba-1}\, s^{\ba} , \text{ as $s\to\infty$}. 
\end{align}
In the degenerate case when $b/a\in\{1,2,...\}$, we have
\begin{align*}
    \tilde{d}_{1}(s) &= \frac{\tilde{r}_2'(s)-a\tilde{r}_2(s)}{\mathcal{W}(a,b,0)\e^{as}(1+s)^{-1}}
    =    -\frac{a}{\mathcal{W}(a,b,0)}\tilde{r}_2(s)(1+s)\e^{-as}\left(1+\frac{1}{-a}\frac{\tilde{r}_2'(s)}{\tilde{r}_2(s)}\right) , \\
    \tilde{d}_{2}(s) &= -\frac{r_1'(s)-ar_1(s)}{\mathcal{W}(a,b,0)\e^{as}(1+s)^{-1}}
    =\frac{1}{\mathcal{W}(a,b,0)}b(1+s)U(1-\ba,2,-a(1+s)). 
\end{align*}
We notice by \eqref{eq:secr2} and \eqref{eq.r2tilasy} that
\begin{equation}\label{eq.d1tilasy}
\tilde{d}_{1}(s) \sim |a|^{-\ba}
 s^{-b/a}\e^{-as}, \quad \text{as $s\to\infty$,}
\end{equation}
which mirrors the asymptotic behaviour for $d_1$ in \eqref{eq:d1asy}
in the non--degenerate case.
As to the asymptotic behaviour of $\tilde{d}_2$, we may use
\eqref{eq:chg3b} to obtain
\begin{equation}\label{eq.d2tilasy}
\tilde{d}_{2}(s) \sim  \frac{1}{\mathcal{W}(a,b,0)}b
|a|^{\ba-1}s^{\ba}
\quad \text{as $s\to\infty$,}
\end{equation}
and so $\tilde{d}_2$ has the same asymptotic behaviour as $d_2$
given in \eqref{eq:d2asy} in the non--degenerate case.
%

Using the fact that 
$\text{Cov}(X(t),X(t+\Delta))$ obeys
\eqref{eq.autocovarianceaverage} for $t\geq 0$ and $\Delta\geq 0$,
and $r(t,s)$ is given by \eqref{eq:resdecomp}, we have
\begin{equation} \label{eq:acvfspec1}
 \text{Cov}(X(t),X(t+\Delta))=
 \begin{cases}
        c_{1,t}r_{1}(t+\Delta) + c_{2,t}r_{2}(t+\Delta), &\quad a<0, \quad b/a\not\in\{1,2...\}, \\
        \tilde{c}_{1,t}r_{1}(t+\Delta) + \tilde{c}_{2,t}\tilde{r}_{2}(t+\Delta), &\quad a<0, \quad b/a\in\{1,2...\},
    \end{cases}
\end{equation}
for $t\geq 0$ and $\Delta\geq 0$, where
\begin{equation} \label{eq:c1c2t}
c_{1,t}=\sigma^2\int_0^t r(t,s)d_1(s)\,ds, \quad c_{2,t}=\sigma^2
\int_0^t r(t,s)d_2(s)\,ds,
\end{equation}
and
\begin{equation}  \label{eq.c1c2tildet}
\tilde{c}_{1,t}=\sigma^2\int_0^t r(t,s)\tilde{d}_1(s)\,ds, \quad
\tilde{c}_{2,t}=\sigma^2 \int_0^t r(t,s)\tilde{d}_2(s)\,ds.
\end{equation}

In order that certain limiting constants in our analysis are
non--zero, we find it useful to employ the following integral
representation of $U$:
\begin{equation}\label{eq:Uint}
    U(\alpha,\beta,t) = \frac{1}{\Gamma(\alpha)}\int_{0}^{\infty}\e^{-tu} u^{\alpha-1} (1+u)^{\beta-\alpha-1} du, \quad \alpha>0.
\end{equation}
 It appears as
\cite[13.4.4]{NIST}.

\subsection{$a>0$}
When $a>0$, the solution of \eqref{eq:secondorder} can be expressed
in terms of confluent hypergeometric functions, according to:
\begin{equation}\label{eq:detspec3}
        x(t) = c_{3}r_{3}(t) + c_{4}r_{4}(t) \quad \text{ for $a>0$ and $b/a\not\in\{-1,-2,...\}$} \\
\end{equation}
where
\begin{align}\label{eq:r3r4}
    r_{3}(t) = U(1+\ba,1,a(1+t)), \quad r_{4}(t) = M(1+\ba,1,a(1+t)).
\end{align}
Using \eqref{eq:chg3b}, we get
\begin{equation} \label{eq.r3asy}
r_3(t)\sim a^{-1-\ba}t^{-1-\ba}, \quad \text{as $t\to\infty$,
$a>0$},
\end{equation}
and using \eqref{eq:chg3a}, we obtain
\begin{equation}  \label{eq.r4asy}
r_4(t)
\sim \frac{1}{\Gamma(1+\ba)} e^a a^{\ba} e^{at} t^{\ba}, \quad
\text{ as $t\to\infty$, $a>0$, $\ba\not\in \{-1,-2,\ldots\}$}
\end{equation}

The initial conditions \eqref{eq:secondorderic} can be used to
determine $c_3$ and $c_4$; the relevant formulae are:
\begin{align}
    c_{3} &= \Gamma(1+\ba)\e^{-a} \left( b\psi(0)M(1+\ba,2,a) - b\int_{-1}^{0}\psi(s) \rd s \, M(1+\ba,1,a) \right), \nonumber\\
\label{eq.c4}
    c_{4} &= \Gamma(1+\ba)\e^{-a} \left( a\psi(0)U(1+\ba,2,a) + b\int_{-1}^{0}\psi(s) \rd s \, U(1+\ba,1,a) \right).
\end{align}
One may verify, as before, that $r_3$ and $r_4$ solve
\eqref{eq:secondorder}. In the determination of these formulae for
$c_3$ and $c_4$, we have used the fact that one may deduce from
Kummer's differential equation
the identities \cite[13.3.13 \& 13.3.14]{NIST}, which are 
\begin{align}
    &(\alpha+1)z M(\alpha+2,\beta+2,z) + (\beta+1)(\beta-z)M(\alpha+1,\beta+1,z) \notag\\
    &\qquad -\beta(\beta+1)M(\alpha,\beta,z) = 0 \label{eq:KM} \\
    &(\alpha+1)z U(\alpha+2,\beta+2,z) + (z-\beta)U(\alpha+1,\beta+1,z) - U(\alpha,\beta,z) = 0. \label{eq:KU}
\end{align}
Moreover, letting $\beta\to 0$ in \eqref{eq:KM} and \eqref{eq:KU}
gives
\begin{align}
    &(\alpha+1)z M(\alpha+2,2,z) - z M(\alpha+1,1,z)- \alpha z M(\alpha+1, 2,z) = 0, \label{eq:KM1}\\
    &(\alpha+1)z U(\alpha+2,2,z) + z U(\alpha+1,1,z) - z U(\alpha+1,2,z) = 0. \label{eq:KU1}
\end{align}
as \cite[13.2.5]{NIST} in conjunction with \cite[5.2.1]{NIST} gives
$\lim_{\beta\to0} \beta M(\alpha,\beta,z)= \alpha z M(\alpha+1,2,z)$
and \cite[13.2.11]{NIST} gives $U(\alpha,0,z) = z U(\alpha+1,2,z)$.

Again, for certain values of $a$ and $b$ (i.e., if $-b/a\in
\{1,2,3\ldots\}$), the two functions on the right--hand side of
\eqref{eq:detspec1} are no longer linearly independent. Nevertheless
the second--order equation \eqref{eq:secondorder} has two linearly
independent solutions $r_3$ (still given by \eqref{eq:r3r4}) and
$\tilde{r}_4$, and so the solution of \eqref{eq.detaverage} obeys
\begin{equation}\label{eq:detspec4}
    x(t)=\tilde{c}_{3} r_{3}(t) + \tilde{c}_{4} \tilde{r}_{4}(t), \quad t\geq0, \quad \text{ for $a>0$ and $b/a\in\{-1,-2,...\}$}.
\end{equation}
By \eqref{eq.r3asy}, $r_3(t)>0$ for all $t$ sufficiently large.
Therefore we may define
$t_2=1+\max(0,\sup\{t\in\mathbb{R}:r_3(t)=0\})$, where
$\sup\{t\in\mathbb{R}:r_3(t)=0\}:=-\infty$ if $r_3(t)\neq 0$ for all
$t\geq 0$. By considering the Wronskian of $r_3$ and $\tilde{r}_4$
for $t\geq t_2$ we have
\begin{equation} \label{eq.r4firstorder}
    \tilde{r}'_4(t) - \frac{r'_3(t)}{r_3(t)}\tilde{r}_4(t) = \mathcal{W}(a,b,0)\frac{\e^{at}(1+t)^{-1}}{r_3(t)}, \quad t\geq t_2,
\end{equation}
where $\mathcal{W}(a,b,0)\neq 0$ is the Wronskian of $r_3$ and
$\tilde{r}_4$ at $t=0$. \eqref{eq.r4firstorder}
yields the representation
\[
    \tilde{r}_{4}(t) = r_3(t)\frac{\tilde{r}_4(t_2)}{r_3(t_2)}
    + \mathcal{W}(a,b,0)r_{3}(t)\int_{t_2}^{t}\frac{\e^{as}(1+s)^{-1}}{r_{3}^{2}(s)} \rd s, \quad t\geq t_2
\]
for $\tilde{r}_{4}$.
By means of l'H\^{o}pital's rule and \eqref{eq.r3asy} we can deduce
from this representation for $\tilde{r}_4$ that
\begin{equation}\label{eq:secr4}
    \lim_{t\to\infty}\e^{-at}t^{-\ba}\tilde{r}_{4}(t) = \mathcal{W}(a,b,0) a^{\ba}.
\end{equation}
This is consistent with the asymptotic behaviour we established for
$r_4$ in \eqref{eq.r4asy}.

It is also useful to determine some asymptotic information about
$\tilde{r}_4'$. Notice that
$t\mapsto U(1+\ba,1,a(1+t))$ is a polynomial of degree $-1-b/a\in
\mathbb{N}$, and so $\lim_{t\to\infty} r_3'(t)/r_3(t)=0$. By
\eqref{eq:secr4}, it follows that $\tilde{r}_4(t)\neq 0$ for all
$t\geq t_3$. Letting $t_4=\max(t_2,t_3)$, we rewrite
\eqref{eq.r4firstorder} for $t\geq t_4$ to get
\begin{equation*}
    \frac{\tilde{r}'_4(t)}{\tilde{r}_4(t)} = \frac{r'_3(t)}{r_3(t)}
    + \mathcal{W}(a,b,0)\frac{\e^{at}(1+t)^{-1}}{r_3(t)\tilde{r}_4(t)}.
\end{equation*}
Using the fact that $r_3(t)\sim a^{-1-\ba}t^{-1-b/a}$ as
$t\to\infty$ together with \eqref{eq:secr4} shows that the second
term has limit $a$, and therefore
\begin{equation} \label{eq.r4tilasy}
\lim_{t\to\infty} \frac{\tilde{r}'_4(t)}{\tilde{r}_4(t)}=a.
\end{equation}

Since $r_3$ and $\tilde{r}_4$ are linearly independent, we can use
the representation \eqref{eq:detspec4} for $x$ to find $\tilde{c}_3$
and $\tilde{c}_4$ such that the initial conditions of
\eqref{eq:secondorderic} (or \eqref{eq.detaverage}) are satisfied.
In particular, $\tilde{c}_4$ can be expressed according to
\[
    \tilde{c}_4 = \frac{1}{\mathcal{W}(a,b,0)} \left( a\psi(0)\, U(1+\ba,2,a) + b\int_{-1}^{0}\psi(s)ds\, U(1+\ba,1,a) \right).
\]
An argument, which is identical in all respects to that used to
deduce the representation \eqref{eq:resdecomp} of the solution $r$
of the resolvent equation \eqref{eq.resolv} in the case when $a<0$,
can be used to justify the formulae
\begin{equation} \label{eq:resdecomp2}
    r(t,s) =
    \begin{cases}
        d_3(s)r_{3}(t) + d_4(s)r_{4}(t), &\quad a>0, \quad b/a\not\in\{-1,-2...\}, \\
        \tilde{d}_3(s)r_{3}(t) + \tilde{d}_4(s)\tilde{r}_{4}(t), &\quad a>0, \quad b/a\in\{-1,-2...\}.
    \end{cases}
\end{equation}
Conditions for $d_3$ and $d_4$, and for $\tilde{d}_3$ and
$\tilde{d}_4$, are obtained from the initial conditions
\eqref{eq.resolvic} and \eqref{eq:chg4}, just as was done to obtain
the equations \eqref{eq.d1d2ic} for $d_1$ and $d_2$ in the case when
$a<0$. Solving the corresponding equations to \eqref{eq.d1d2ic}, we
obtain
\begin{align}
    d_{3}(s) &= \Gamma(1+\ba) \e^{-a(1+s)}(1+s) b M(1+\ba,2,a(1+s)) , \notag\\
    d_{4}(s) &= \Gamma(1+\ba) \e^{-a(1+s)}(1+s) a U(1+\ba,2,a(1+s)). \label{eq:d4}
\end{align}
Proceeding in the same manner in the degenerate case when
$b/a\in\{-1,-2,...\}$ yields the expressions
\begin{align*}
    \tilde{d}_{3}(s) &= \frac{\tilde{r}_4'(s)-a\tilde{r}_4(s)}{\mathcal{W}(a,b,0)\e^{as}(1+s)^{-1}} =-\frac{a}{\mathcal{W}(a,b,0)}\tilde{r}_4(s)(1+s)\e^{-as}\left(1+\frac{1}{-a}\frac{\tilde{r}_4'(s)}{\tilde{r}_4(s)}\right) , \\
    \tilde{d}_{4}(s) &= -\frac{r_3'(s)-ar_3(s)}{\mathcal{W}(a,b,0)\e^{as}(1+s)^{-1}}
    =\frac{1}{\mathcal{W}(a,b,0)}\e^{-as}(1+s)aU(1+\ba,2,a(1+s)). 
\end{align*}
We now turn our attention to the asymptotic behaviour of $d_3$,
$d_4$ etc. Using \eqref{eq:chg3}, we can show that
\begin{align} \label{eq.d3asy}
d_3(s)&\sim ba^{b/a-1}s^{b/a}, \quad\text{ as $s\to\infty$},
\\
\label{eq.d4asy} d_4(s)&\sim \Gamma(1+b/a) e^{-a} a^{-b/a} s^{-b/a}
\e^{-as}, \quad \text{ as $s\to\infty$}.
\end{align}
In the degenerate case when $b/a\in\{-1,-2,...\}$, 
we may use \eqref{eq:secr4} and \eqref{eq.r4tilasy} to establish
that
\begin{equation}\label{eq.d3tilasy}
\tilde{d}_{3}(s) = o(s^{\ba+1}), \quad \text{as $s\to\infty$.}
\end{equation}
\eqref{eq.d3tilasy} is consistent with, but weaker than, the
asymptotic estimate obtained for $d_3$ in \eqref{eq.d3asy} in the
non--degenerate case.
As to the asymptotic behaviour of $\tilde{d}_4$, we may use
\eqref{eq:chg3b} to give
\begin{equation}\label{eq.d4tilasy}
\tilde{d}_{4}(s) \sim  \frac{1}{\mathcal{W}(a,b,0)}a^{-\ba}s^{-\ba}
\e^{-as}
\quad \text{as $s\to\infty$,}
\end{equation}
which is consistent with the asymptotic behaviour in
\eqref{eq.d4asy} in the non--degenerate case.

\subsection{$a=0$}
When $a=0$ and $b>0$, it transpires that the solution of
\eqref{eq:secondorder} can be expressed in terms of {\it modified
Bessel functions}. To be more precise, we have
\begin{equation}\label{eq:detspec5}
        x(t) = c_{5}r_{5}(t) + c_{6}r_{6}(t), \quad \text{for $t\geq 0$, when $a=0$ and $b>0$} \\
\end{equation}
where
\begin{align}\label{eq:r5r6}
    r_{5}(t) = I_0(2\sqrt{b(t+1)}), \quad r_{6}(t) = K_0(2\sqrt{b(t+1)})
\end{align}
and $I_\nu$ and $K_\nu$ are two linearly independent solutions of
{\it modified Bessel's equation}
\[
    z^2 w''(z) + z w'(z) - (z^2+\nu^2) w(z) =0,
\]
with $\nu$ a real parameter. See e.g.~\cite[Chapter~10.25.1]{NIST}
for details. $I_\nu$ and $K_\nu$ are referred to as {\it modified
Bessel functions of the first kind} and {\it second kind}
respectively. One may verify that $r_5$ and $r_6$ are linearly
independent solutions of \eqref{eq:secondorder} by a direct
calculation.

The constants $c_5$ and $c_6$ in \eqref{eq:detspec5} can be found
using the initial conditions \eqref{eq:secondorderic} or
\eqref{eq.detaverageinitcondn}. Doing this yields the formulae
\begin{align}\label{eq:cntsc5}
    c_{5} &= 2 \left( \psi(0)\sqrt{b}K_1(2\sqrt{b}) + b\int_{-1}^{0}\psi(s)ds K_0(2\sqrt{b}) \right), \\
    c_{6} &= 2 \left( \psi(0)\sqrt{b}I_1(2\sqrt{b}) - b\int_{-1}^{0}\psi(s)ds I_0(2\sqrt{b}) \right). \notag
\end{align}
In finding these expressions for $c_5$ and $c_6$, we have exploited
the fact that the Wronskian of $I_\nu$ and $K_\nu$ obeys the
identity
\begin{equation} \label{eq.Wmodbessel}
    W\{K_\nu(z),I_\nu(z)\} = 1/z
\end{equation}
(which appears as~\cite[10.28.2]{NIST}, for example) and the
derivatives of $I_0$ and $K_0$ obey
\begin{equation} \label{eq.derivmodbessel}
    I_0'(z)=I_1(z) , \quad K_0'(z)=-K_1(z).
\end{equation}
(cf., e.g.~\cite[10.29.3]{NIST}). We will also employ in the sequel
the asymptotic behaviour of $I_\nu$ and $K_\nu$. The relevant
results are  
\begin{align}\label{eq:modBesasy}
    I_\nu(t) = \frac{\e^{t}}{\sqrt{2\pi t}} \{1+O(t^{-1})\},
    \quad
    K_\nu(t) = \sqrt{\frac{\pi}{2t}} \e^{-t} \{1+O(t^{-1})\}, \quad \text{ as $t\to\infty$},
\end{align}
which appear as ~\cite[9.7.1 \& 9.7.2]{abramstegun}, for example.

As in the cases when $a<0$ or $a>0$, the solution to the resolvent
equation \eqref{eq.resolv} can be represented as the sum of products
of functions in $t$ and $s$. Indeed, $r(t,s)$ can be written in the
form
\begin{equation} \label{eq:resdecomp6}
    r(t,s) = d_5(s)r_{5}(t) + d_6(s)r_{6}(t), \quad \text{$t\geq s\geq 0$, for $a=0$ and $b>0$}.
\end{equation}
As in e.g., \eqref{eq.d1d2ic}, $d_5$ and $d_6$ may be found by
solving a pair of linear simultaneous equations formulated from
\eqref{eq.resolvic}. This leads to the formulae
\begin{align}\label{eq:cntsd5}
    d_5(s) = 2\sqrt{b(s+1)} K_1(2\sqrt{b(s+1)}), \quad
    d_6(s) = 2\sqrt{b(s+1)} I_1(2\sqrt{b(s+1)}),
\end{align}
by making use of the identities \eqref{eq.Wmodbessel} and
\eqref{eq.derivmodbessel}.

In the case when $a=0$ and $b<0$, it turns out that the solution of
\eqref{eq:secondorder} can be expressed in terms of {\it Bessel
functions}. Indeed, we have
\begin{equation}\label{eq:detspec7}
        x(t) = c_{7}r_{7}(t) + c_{8}r_{8}(t) \quad \text{for $t\geq 0$, when  $a=0$ and $b<0$}
\end{equation}
where
\begin{align}\label{eq:r7r8}
    r_{7}(t) = J_0(2\sqrt{-b(t+1)}), \quad r_{8}(t) = Y_0(2\sqrt{-b(t+1)})
\end{align}
and $J_\nu$ and $Y_\nu$ are two linearly independent solutions of
{\it Bessel's Equation}
\[
    z^2 w''(z) + z w'(z) + (z^2-\nu^2) w(z) =0,
\]
where $\nu$ is a real parameter (cf.,
e.g.~\cite[Chapter~10.2.1]{NIST} for details). $J_\nu$ and $Y_\nu$
are referred to as the {\it Bessel functions of the first kind} and
{\it second kind} respectively. We remark that the Bessel functions
are oscillatory, convergent to zero and real--valued for positive
arguments. Moreover as the argument $z\to +\infty$, $Y_\nu(z)$ and
$J_\nu(z)$ share the same amplitude, and are out of phase by
$\frac{1}{2}\pi$, \cite[pp.242, Ch.7.5.1]{olver:asy}. We make this
precise in \eqref{eq:Besasy} below. One may verify by direct
calculation that $r_7$ and $r_8$ are linearly independent solutions
of \eqref{eq:secondorder}.

From \eqref{eq:detspec7} and \eqref{eq:secondorderic}, we can find
expressions for the constants $c_7$ and $c_8$. In fact, one obtains
\begin{align}\label{eq:cntsc7}
    c_{7} &= \pi\left( \psi(0)\sqrt{|b|}Y_1(2\sqrt{|b|}) - b\int_{-1}^{0}\psi(s)ds\, Y_0(2\sqrt{|b|}) \right), \\
    c_{8} &= \pi\left( \psi(0)\sqrt{|b|}J_1(2\sqrt{|b|}) + b\int_{-1}^{0}\psi(s)ds\, J_0(2\sqrt{|b|}) \right). \label{eq:cntsc8}
\end{align}
In deducing these formulae, we have used the fact that the Wronskian
of $J_\nu$ and $Y_\nu$ obeys
\[
    W\{J_\nu(z),Y_\nu(z)\} = 2/(\pi z)
\]
(cf., e.g.,~\cite[10.5.2]{NIST}) and also that the derivatives of
$J_\nu$ and $Y_\nu$ obey
\[
    J_0'(z)=-J_1(z) , \quad Y_0'(z)=Y_1(z)
\]
cf., e.g.~\cite[10.6.3]{NIST}. In asymptotic analysis of the
solution of the stochastic equation, we will need information about
the asymptotic behaviour of $J_\nu(t)$ and $Y_\nu(t)$ as
$t\to\infty$. The required asymptotic information is furnished
by~\cite[9.2.1, 9.2.2, 9.2.5, 9.2.6]{abramstegun}, which we record
now for convenience:
\begin{subequations}\label{eq:Besasy}
\begin{align}
    J_\nu(t) &=\sqrt{2/(\pi t)} \{\cos(t-\frac{1}{2}\nu\pi -\frac{1}{4}\pi) + O(t^{-1}) \}, \quad \text{ as $t\to\infty$},  \\
    Y_\nu(t) &=\sqrt{2/(\pi t)} \{\sin(t-\frac{1}{2}\nu\pi -\frac{1}{4}\pi) + O(t^{-1}) \}, \quad \text{ as $t\to\infty$}.
\end{align}
\end{subequations}
Once again the solution to the resolvent equation \eqref{eq.resolv}
can be written as a sum of products of functions depending on $t$
and $s$. Indeed, $r(t,s)$ can be written in the form
\begin{equation} \label{eq:resdecomp8}
    r(t,s) = d_7(s)r_{7}(t) + d_8(s)r_{8}(t), \quad t\geq s\geq 0,\quad a=0, \quad b<0,
\end{equation}
and expressions for $d_7$ and $d_8$ may be obtained from this
representation and \eqref{eq.resolvic}.
This yields
\begin{align}\label{eq:cntsd78}
    d_7(s)
    = \pi \sqrt{|b|(1+s)} Y_{1}(2\sqrt{|b|(s+1)}), \quad
    d_8(s)
    =\pi\sqrt{|b|(s+1)}\, J_{1}(2\sqrt{|b|(s+1)}),
\end{align}
upon use of the identities for the Wronskian of $J_0$ and $Y_0$ and
formulae for the derivatives of $J_0$ and $Y_0$.

\section{Recurrent Asymptotic Behaviour}\label{sect:rec}
\subsection{Pathwise asymptotic stationary behaviour}\label{sect:recpath}
The asymptotic behaviour of \eqref{eq.longmemorysdde} in the case
when $a<0$ and $a+b<0$ is very similar to the Ornstein--Uhlenbeck
process $U$ given by
\begin{equation} \label{eq.OU}
dU(t)=aU(t)\,dt + \sigma\,dB(t), \quad t\geq 0; \quad U(0)=0.
\end{equation}
There is a unique continuous adapted process which obeys
\eqref{eq.OU} and it is given by
\begin{equation}\label{eq:urep}
    U(t)=e^{at}\int_0^t \sigma e^{-as}\,dB(s),\quad t\geq 0.
\end{equation}
\begin{theorem}\label{thm.averagegaussian}
Let $a<0$ and $a+b\leq 0$. Suppose that $\psi\in
C([-1,0];\mathbb{R})$. Let $X$ be the unique continuous adapted
process which obeys \eqref{eq.longmemorysdde} and let $U$ be the
unique continuous adapted process which obeys \eqref{eq.OU}. Then:
\begin{itemize}
\item[(i)] $X$ obeys
\begin{equation} \label{eq.ldXave}
\limsup_{t\to\infty} \frac{X(t)}{\sqrt{2\log
t}}=\frac{\sigma}{\sqrt{2|a|}}, \quad \liminf_{t\to\infty}
\frac{X(t)}{\sqrt{2\log t}}=-\frac{\sigma}{\sqrt{2|a|}},
\quad\text{a.s.}
\end{equation}
\item[(ii)] In the case that $a+b<0$, we have
\begin{align} \label{eq.ouaveasyequal1}
    \lim_{t\to\infty} \{ X(t)-U(t) \} =0, \quad\text{a.s.}
\end{align}
and 
\begin{equation} \label{eq.ave0}
\lim_{t\to\infty} \frac{1}{1+t}\int_{-1}^t X(s)\,ds = 0, \quad \text{a.s.}
\end{equation}
\item[(iii)] In the case that $a+b=0$, we have
\begin{align} \label{eq.ouaveasyequal2}
    \lim_{t\to\infty} \{ X(t)-U(t) \} = L, \quad\text{a.s.}
\end{align}
where $L$ is a proper Gaussian random variable with mean and
variance given by
\begin{align*}
    \mathbb{E}[L] &= b^2 \Gamma(-\ba)\left(\int_{-1}^{0}\psi(u)du\right)\int_{0}^{\infty}
     \, U(1-\ba,2,-a(1+s))\,ds\\
    &\qquad+ b^2  \Gamma(-\ba) \psi(0)\int_{u=0}^{\infty}\e^{au}
    \int_{s=u}^{\infty}  \, U(1-\ba,2,-a(1+s))\,ds\,du, \\
    \Var[L]&= \sigma^2\int_{0}^{\infty} \e^{-2au}
    \left(\int_{u}^{\infty}\e^{aw}\int_{w}^{\infty} b^2  \Gamma(-\ba)  \, U(1-\ba,2,-a(1+s))\,ds\,dw
     \right)^2\,du,
\end{align*}
and 
\begin{equation} \label{eq.aveL}
\lim_{t\to\infty} \frac{1}{1+t}\int_{-1}^t X(s)\,ds = L, \quad \text{a.s.}
\end{equation}
\end{itemize}
\end{theorem}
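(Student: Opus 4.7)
The plan is to base everything on the variation-of-parameters identity $X(t) = x(t) + \sigma\int_0^t r(t,s)\,dB(s)$ of Lemma~\ref{lm:avXsol} combined with the resolvent decomposition \eqref{eq:resdecomp}, which in the non-degenerate subcase $\ba\notin\{1,2,\dots\}$ gives
\[
X(t) \;=\; x(t) + \sigma r_1(t) I_1(t) + \sigma r_2(t) I_2(t),
\]
where $I_j(t) := \int_0^t d_j(s)\,dB(s)$ are continuous Gaussian martingales. In the degenerate subcase (which can only arise under (ii) since $a+b=0$ forces $\ba=-1\notin\{1,2,\ldots\}$) one replaces $r_2,d_2$ by $\tilde r_2,\tilde d_2$ and uses \eqref{eq:secr2}, \eqref{eq.d1tilasy}, \eqref{eq.d2tilasy}; the rest of the argument is then essentially identical.

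For (iii) the computations simplify considerably: $\ba=-1$ forces $1-\ba=2$ so $r_2(t) = \e^{at}M(1,1,-a(1+t))\equiv\e^{-a}$ (a constant), while $d_2\in L^2(0,\infty)$ by \eqref{eq:d2asy}. Hence $I_2(t)\to I_2(\infty)$ a.s.\ by martingale convergence, $\sigma r_2(t) I_2(t) \to \sigma\e^{-a}I_2(\infty)$ a.s., and $x(t) = c_1 r_1(t)+c_2 r_2(t)\to c_2\e^{-a}$ since \eqref{eq.r1asy} gives $r_1(t)\to 0$ exponentially. The key estimate is for
\[
G(t) := \sigma r_1(t) I_1(t) - U(t) = \sigma\int_0^t\bigl[r_1(t)d_1(s) - \e^{a(t-s)}\bigr]\,dB(s).
\]
Using \eqref{eq.r1asy} and \eqref{eq:d1asy} together with a Taylor expansion of $(t/s)^\ba$ about $s=t$, I will show $\Var[G(t)] = O(t^{-2})$, after which Borel--Cantelli at integer times $t_n=n$ combined with a Gaussian modulus-of-continuity bound on $[n,n+1]$ gives $G(t)\to 0$ a.s. Summing yields $X(t) - U(t) \to L$ a.s.\ with $L = c_2\e^{-a} + \sigma\e^{-a}I_2(\infty)$; the formulae for $\mathbb{E}[L]$ and $\Var[L]$ stated in the theorem then follow from this explicit $L$ using the integral representation \eqref{eq:Uint} of $U$, a Fubini interchange, and the identity $U(2,2,z) = -1/z - U(1,1,z)$ (and its iterated versions), which is elementary integration by parts. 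The proof of (ii) is parallel: now $\ba>-1$ so \eqref{eq.r2asy} gives $r_2(t)\to 0$ at a polynomial rate and $x(t)\to 0$ follows immediately; to get $\sigma r_2(t) I_2(t)\to 0$ a.s.\ I split on whether $d_2\in L^2(0,\infty)$ (martingale convergence closes it) or not (Dambis--Dubins--Schwarz, writing $I_2(t) = \beta(\int_0^t d_2^2)$, then the Brownian LIL bound $|I_2(t)| = O(\sqrt{\langle I_2\rangle_t\log\log\langle I_2\rangle_t})$ combined with the polynomial decay of $r_2$ kills the product); the same argument for $G(t)$ as in (iii) applies verbatim.

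Part (i) then follows immediately from the decomposition $X(t) = U(t) + W(t)$ with $W(t)$ a.s.\ convergent to $0$ (resp.\ $L$) and the classical pathwise law of the iterated logarithm for the asymptotically stationary Ornstein--Uhlenbeck process \eqref{eq.OU}, namely $\limsup_{t\to\infty} U(t)/\sqrt{2\log t} = \sigma/\sqrt{2|a|}$ and $\liminf_{t\to\infty} U(t)/\sqrt{2\log t} = -\sigma/\sqrt{2|a|}$ a.s.; adding an a.s.\ convergent perturbation changes neither. For \eqref{eq.ave0} and \eqref{eq.aveL}, integrating \eqref{eq.OU} pathwise gives $a\int_0^t U(s)\,ds = U(t) - \sigma B(t)$, and the LIL bound on $U$ together with the SLLN $B(t)/t\to 0$ forces $(1+t)^{-1}\int_0^t U(s)\,ds \to 0$ a.s.; applying Ces\`aro to the a.s.\ limit $X(s)-U(s)\to 0$ (resp.\ $L$) then completes the proof. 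The main technical obstacle is upgrading from $L^2$- to almost-sure convergence of $G(t)$, where the polynomial $O(t^{-2})$ variance bound is just fast enough for Borel--Cantelli on $t_n=n$ to suffice once it is paired with a sharp Gaussian maximal inequality over short intervals.
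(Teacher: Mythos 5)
Your route is viable but genuinely different from the paper's. You work directly with the stochastic convolution, writing $X(t)=x(t)+\sigma r_1(t)I_1(t)+\sigma r_2(t)I_2(t)$ and isolating the cancellation in $G(t)=\sigma\int_0^t[r_1(t)d_1(s)-e^{a(t-s)}]\,dB(s)$. The paper instead subtracts at the level of the equation: it sets $Y$ to be the OU process started from $\psi(0)$, so that $Z=X-Y$ satisfies the \emph{pathwise deterministic} Volterra equation \eqref{eq.diffave} with forcing $f(t)=\frac{b}{1+t}(\int_{-1}^0\psi+\int_0^tY)$, whose a.s.\ decay rate $O(t^{-1/2}\sqrt{\log\log t})$ comes from the LIL; then $Z(t)=r_1(t)\int_0^td_1f+r_2(t)\int_0^td_2f$ is handled by ordinary calculus path by path, and only in case (iii) is an a.s.\ limit theorem for anticipating stochastic integrals needed (Lemma~\ref{lm:Lvar}, via Theorem~7 of \cite{appdanlimop}). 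The paper's device buys freedom from exactly the issue that is the crux of your proof: upgrading the $L^2$ bound $\Var[G(t)]=O(t^{-2})$ (which is correct, via $r_1(t)d_1(s)=e^{a(t-s)}(\tfrac{1+t}{1+s})^{\ba}(1+O(t^{-1}))(1+O(s^{-1}))$) to a.s.\ convergence on the continuum. Your "Gaussian modulus-of-continuity bound on $[n,n+1]$" is the one step you must carry out with care: $G$ is not a martingale, and its two constituents $\sigma r_1(t)I_1(t)$ and $U(t)$ each oscillate by $O(\sqrt{\log n})$ and $O(1)$ over $[n,n+1]$, so no bound obtained by splitting $G$ into these pieces can work. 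You must compute the increment variance of $G$ itself, using $\partial_t h(t,s)=a\,h(t,s)+[r_1'(t)-ar_1(t)]d_1(s)$ with $r_1'(t)-ar_1(t)=-b\,e^{at}U(1-\ba,2,-a(1+t))=O(e^{at}t^{\ba-1})$ to get $\mathbb{E}[(G(t)-G(t'))^2]=O(|t-t'|/n^2)$ on $[n,n+1]$; chaining plus Borel--Cantelli then closes the argument. (Alternatively, your $G$ satisfies the hypotheses of the same Theorem~7 the paper cites.) Two smaller points: the identity you quote should read $U(2,2,z)=z^{-1}-U(1,1,z)=-\frac{d}{dz}U(1,1,z)$ (your sign is wrong, though the reconciliation of your $L=c_2e^{-a}+\sigma e^{-a}I_2(\infty)$ with the stated iterated-integral formulae for $\mathbb{E}[L]$ and $\Var[L]$ does go through using $\int_{|a|}^\infty U(2,2,v)\,dv=U(1,1,|a|)$); and in the degenerate case $\ba\in\{1,2,\dots\}$ the paper only records leading-order asymptotics for $\tilde d_1$, not the $O(t^{-1})$ correction rate your variance bound for $G$ requires, so "essentially identical" there needs you to extract that rate from \eqref{eq:linr2}.
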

The result \eqref{eq.ave0} shows that, when $a < 0$ and $a+b < 0$, the sample mean of the process
$X$ tends to zero, i.e. the fluctuations of $X$, which are of order
$\sqrt{\log t}$, occur symmetrically
about zero. The result \eqref{eq.aveL} however shows that, when $a < 0$ and $a + b = 0$, the
fluctuations of $X$ occur about the level $L$ (which is random and so will be different
for each sample path).

It is of interest to ask if we provide an upper bound on the a.s.
rate of convergence of $X-U$ to zero when $a+b<0$. Of course the
case when $a+b=0$ is excluded, because in that case $X-U$ tends to a
non--trivial limit. We show that in all cases, the bound on the
closeness decays polynomially.
\begin{theorem}\label{thm:XUrate}
Let $a<0$ and $a+b< 0$. Suppose that $\psi\in C([-1,0];\mathbb{R})$.
Let $X$ be the unique continuous adapted process which obeys
\eqref{eq.longmemorysdde} and let $U$ be the unique continuous
adapted process which obeys \eqref{eq.OU}. Then:
\begin{itemize}
    \item[(i)] If $a+b<0$ and $2b+a>0$, then
    \[
        \limsup_{t\to\infty}\frac{|X(t)-U(t)|}{t^{-1-\ba}} \in[0,\infty), \quad\text{a.s.}
    \]
    \item[(ii)] If $2b + a < 0$
    \[
     \limsup_{t\to\infty} \frac{|X(t)-U(t)|}{t^{-1/2}\sqrt{\log\log t}} \in[0,\infty), \quad\text{a.s.}
    \]
    \item[(iii)] If $2b+a=0$, then
    \[
        \limsup_{t\to\infty} \frac{|X(t)-U(t)|}{t^{-1/2}\log t \sqrt{\log\log t}} \in[0,\infty), \quad\text{a.s.}
    \]
\end{itemize}
\end{theorem}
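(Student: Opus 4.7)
The plan is to combine the variation--of--constants representations of $X$ and $U$ and then to exploit the special--function decomposition of Section~\ref{sec.specialfn} to make the leading--order cancellation between $r(t,s)$ and $\e^{a(t-s)}$ explicit. By Lemma~\ref{lm:avXsol} and \eqref{eq:urep},
\[
X(t)-U(t) \;=\; x(t) \;+\; \sigma\int_{0}^{t}\bigl[r(t,s)-\e^{a(t-s)}\bigr]\,dB(s).
\]
I would then use $x(t)=c_1 r_1(t)+c_2 r_2(t)$ from \eqref{eq:detspec1} and $r(t,s)=d_1(s)r_1(t)+d_2(s)r_2(t)$ from \eqref{eq:resdecomp} (with the obvious tilde analogues when $b/a\in\{1,2,\ldots\}$; their asymptotics coincide via \eqref{eq:secr2}, \eqref{eq.d1tilasy} and \eqref{eq.d2tilasy}, so this subcase is absorbed). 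Setting $N_2(t) := \int_{0}^{t} d_2(s)\,dB(s)$, the decomposition becomes
\[
X(t)-U(t) \;=\; c_2 r_2(t) + \sigma r_2(t) N_2(t) + \mathcal{E}(t),
\]
where $\mathcal{E}(t) := c_1 r_1(t) + \sigma\int_{0}^{t}\bigl[d_1(s)r_1(t)-\e^{a(t-s)}\bigr]\,dB(s)$.

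The first substantive step is to show $\mathcal{E}(t) = O(t^{-1})$ almost surely. Since $r_1(t)$ decays exponentially by \eqref{eq.r1asy}, so does $c_1 r_1(t)$, and the task reduces to controlling the stochastic integral. Using the refined expansions of $M(1-b/a,2,\cdot)$ and $U(-b/a,1,\cdot)$ coming from \eqref{eq:chg3} one obtains, uniformly for $s$ bounded away from $0$,
\[
d_1(s) r_1(t) - \e^{a(t-s)} \;=\; \e^{a(t-s)}\Bigl\{\bigl[(1+t)/(1+s)\bigr]^{b/a} - 1\Bigr\} + \e^{a(t-s)}\,O(1/t),
\]
while the range of $s$ close to $0$ is negligible thanks to the exponential factor $\e^{a(t-s)}$ with $a<0$. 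Squaring and integrating, the substitution $u=t-s$ leads to $\int_{0}^{t}[d_1(s)r_1(t)-\e^{a(t-s)}]^{2}\,ds = O(t^{-2})$. A Gaussian tail estimate and Borel--Cantelli along $t_n=n$, together with a routine increment bound on $[n,n+1]$, then yield $\mathcal{E}(t)=O(t^{-1})$ a.s., which is dominated by each of the three bounds in the theorem.

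The dominant contribution is $r_2(t)\bigl[\sigma N_2(t)+c_2\bigr]$, analysed according to the growth of $\langle N_2\rangle_t=\int_{0}^{t} d_2(s)^{2}\,ds$. By \eqref{eq:d2asy}, $d_2(s)^{2}\sim C\, s^{2b/a}$ as $s\to\infty$, and by \eqref{eq.r2asy}, $r_2(t)\sim C'\, t^{-1-b/a}$. Since $a<0$, the conditions $2b+a>0$, $2b+a<0$, $2b+a=0$ correspond to $2b/a<-1$, $2b/a>-1$, $2b/a=-1$ respectively. In case~(i), $\langle N_2\rangle_\infty<\infty$, the $L^{2}$--bounded continuous martingale $N_2$ converges almost surely to a Gaussian limit, and multiplying by $r_2(t)$ yields $|X(t)-U(t)|=O(t^{-1-b/a})$. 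In case~(ii), $\langle N_2\rangle_t\sim C t^{1+2b/a}\to\infty$, and the law of the iterated logarithm for continuous martingales gives $|N_2(t)| = O(\sqrt{\langle N_2\rangle_t \log\log\langle N_2\rangle_t})= O(\sqrt{t^{1+2b/a}\log\log t})$; multiplication by $|r_2(t)|\sim t^{-1-b/a}$ produces exactly $O(t^{-1/2}\sqrt{\log\log t})$. In case~(iii), $\langle N_2\rangle_t\sim C\log t$ is borderline; a Gaussian Borel--Cantelli argument applied to $N_2$ along a sequence such as $t_n=n$, together with a maximal increment bound across each $[t_n,t_{n+1}]$, delivers $|N_2(t)|=O(\log t\sqrt{\log\log t})$ a.s., and multiplying by $r_2(t)\sim C'\, t^{-1/2}$ yields the stated bound.

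The main obstacle is expected to lie in the second step: obtaining the $O(t^{-2})$ variance bound on the $r_1$--piece requires carrying along the first--order error in the asymptotics of $M(1-b/a,2,z)$ and $U(-b/a,1,z)$ and combining those error terms with the exponential cutoff $\e^{a(t-s)}$, particularly for $s$ near $0$ where the polynomial prefactors misbehave. A secondary technical point is the borderline behaviour in case~(iii), where a direct application of the LIL would give a strictly sharper bound, while the stated cruder bound is obtained by a Borel--Cantelli route that trades sharpness for simplicity.
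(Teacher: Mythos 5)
Your proposal establishes the stated bounds by a genuinely different route from the paper. The paper does not subtract $U$ inside the stochastic integral at all: it introduces the auxiliary Ornstein--Uhlenbeck process $Y$ with initial value $\psi(0)$, notes $Y-U\to 0$ exponentially, and observes that $Z=X-Y$ solves the \emph{deterministic} forced equation \eqref{eq.diffave} with forcing $f(t)=\frac{b}{1+t}\int_{-1}^0\psi+\frac{b}{1+t}\int_0^t Y$, which by the LIL satisfies the pathwise bound $|f(t)|\leq K(\omega)(1+t)^{-1/2}\sqrt{\log\log(t+e)}$. Variation of constants then gives $Z(t)=r_1(t)\int_0^t d_1 f+r_2(t)\int_0^t d_2 f$, and the three cases follow from elementary estimates of these \emph{Riemann} integrals; no stochastic integral with a $t$--dependent kernel ever appears. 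Your route instead keeps $X(t)-U(t)=x(t)+\sigma\int_0^t[r(t,s)-e^{a(t-s)}]\,dB(s)$ and isolates the cancellation $d_1(s)r_1(t)-e^{a(t-s)}$ directly. Your analysis of the dominant piece $r_2(t)N_2(t)$ is correct in all three regimes (and in case (iii) the LIL applied to $\langle N_2\rangle_t\sim C\log t$ would in fact yield the sharper rate $t^{-1/2}\sqrt{\log t\,\log\log\log t}$, which still implies the stated bound), so what each approach buys is: yours, potentially sharper constants/rates; the paper's, a complete avoidance of the hardest analytic step in yours.

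That hardest step is where your write-up has a genuine gap. The process $t\mapsto\int_0^t[d_1(s)r_1(t)-e^{a(t-s)}]\,dB(s)$ is \emph{not} a martingale; it is the difference $r_1(t)N_1(t)-e^{at}N_0(t)$ of two terms each of pathwise size $O(\sqrt{\log t})$ (the second is exactly the OU fluctuation). A pointwise variance bound $O(t^{-2})$ plus Borel--Cantelli along $t_n=n$ gives only $|\mathcal{E}(n)|=O(n^{-1}\sqrt{\log n})$ at integer times, and the ``routine increment bound on $[n,n+1]$'' you invoke cannot be applied to the two martingale pieces separately, since each has unit-interval oscillations of order $\sqrt{\log n}$; the cancellation must be preserved through the interpolation. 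This can be repaired --- e.g.\ by writing $r_1(t)N_1(n)-e^{at}N_0(n)=e^{a(t-n)}\bigl[r_1(n)N_1(n)-e^{an}N_0(n)\bigr]+O(n^{-1})\,r_1(n)|N_1(n)|$ for $t\in[n,n+1]$ and handling $\int_n^t$ by integration by parts, or by appealing to an admissibility result of the type the paper uses in Lemma~\ref{lm:Lvar} --- but as written this step is asserted rather than proved, and it is precisely the point at which the argument could fail if done carelessly. (The resulting bound is $O(t^{-1}\sqrt{\log t})$ rather than $O(t^{-1})$, which is still dominated by all three comparison rates, so the conclusion is unaffected.)
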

While we conjecture that these estimates are sharp, i.e. the limits
superior in Theorem~\ref{thm:XUrate} are positive, such an analysis
would involve, amongst other things, a sharper analysis of the
leading order terms in the expansions in \eqref{eq:chg3}, as well as
lower estimates of certain integrals in the proof. Such analysis
goes beyond the scope of the present work.
\subsection{Asymptotic behaviour of the autocovariance function}\label{sect:recauto}
Theorem~\ref{thm.averagegaussian} shows that $X$ is a Gaussian
process which is asymptotically close to the asymptotically
stationary Gaussian process $U$ (for $b=0$, $X$ is itself an
Ornstein-Uhlenbeck process). Since $U$ is given by \eqref{eq:urep},
its autocovariance function may be shown to obey
\[
\Cov(U(t),U(t+\Delta))=\sigma^2 e^{a\Delta} e^{2at} \int_0^t
e^{-2as} \,ds= e^{a\Delta} \sigma^2
\frac{1}{2|a|}\left(1-e^{2at}\right).
\]
Therefore, for each fixed $t>0$ we have $\Delta\mapsto
\Cov(U(t),U(t+\Delta))$ decays \emph{exponentially} to zero as
$\Delta\to\infty$. It is therefore reasonable to expect that the
autocovariance function of $X$ defined by
\eqref{eq.autocovarianceaverage} to behave according to
$\lim_{\Delta\to\infty} \text{Cov}(X(t),X(t+\Delta))=0$ for every
$t\geq 0$. However, as is shown below, although $X(t)-U(t)\to 0$ as
$t\to\infty$ a.s., for each fixed $t>0$, the autocovariance
$\Delta\mapsto \text{Cov}(X(t),X(t+\Delta))$ decays
\emph{polynomially} to zero as $\Delta\to\infty$.

We have already seen in \eqref{eq:acvfspec1} that it is possible to
represent the autocovariance function in terms of $r_1, r_2, d_1,
d_2$ etc. Using the information about the asymptotic behaviour of
these functions, we can readily how rapidly the autocovariance
function decays in the time lag $\Delta$.
\begin{theorem}\label{thm:acfmem}
    Suppose that $a<0$ and $a+b\leq 0$. Suppose that $\psi\in C([-1,0];\mathbb{R})$. Let $X$ be the unique continuous
adapted process which obeys \eqref{eq.longmemorysdde}.  Let $t\geq0$
be fixed.
    Then
    \begin{equation}
\label{eq.polydecayacvf}
\lim_{\dl\to\infty}\frac{\Cov(X(t),X(t+\Delta))}{\dl^{-(1+\ba)}}=c_{t}(a,b),
    \end{equation}
    where $c_{t}=c_t(a,b)$ is given by
    \begin{align}\label{eq.limacvf2}
         c_t(a,b)=\sigma^2 b |a|^{-1-b/a} \int_0^t r(t,s)(1+s)U(1-b/a,2,-a(1+s))\,ds.
    \end{align}
\end{theorem}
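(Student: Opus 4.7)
The plan is to start from the decomposition \eqref{eq:acvfspec1} of $\Cov(X(t),X(t+\Delta))$ that was derived from the variation-of-parameters formula and the representation of the resolvent. Specifically, when $b/a\notin\{1,2,\ldots\}$ I have
\[
\Cov(X(t),X(t+\Delta))= c_{1,t}\,r_1(t+\Delta) + c_{2,t}\,r_2(t+\Delta),
\]
with $c_{1,t}$, $c_{2,t}$ given by \eqref{eq:c1c2t}; in the degenerate case ($b/a\in\{1,2,\ldots\}$) there is an analogous identity with $r_2$, $c_{2,t}$ replaced by $\tilde r_2$, $\tilde c_{2,t}$. Once the representation is in hand, the proof reduces to a careful extraction of leading-order behaviour in $\Delta$, followed by identification of the numerical factor.

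First I would dispose of the $r_1$ contribution. Since $a<0$, the asymptotic estimate \eqref{eq.r1asy} gives $r_1(t+\Delta)\sim e^{a(t+\Delta)}|a|^{b/a}(t+\Delta)^{b/a}$, which decays exponentially in $\Delta$. Because $c_{1,t}$ is a finite constant (the integrand in \eqref{eq:c1c2t} is continuous on the compact interval $[0,t]$), the product satisfies $\Delta^{1+b/a}c_{1,t}\,r_1(t+\Delta)\to 0$ as $\Delta\to\infty$, so this term is negligible on the scale $\Delta^{-(1+b/a)}$. Next I would apply \eqref{eq.r2asy} to see that
\[
\Delta^{1+b/a}\,r_2(t+\Delta) \longrightarrow \frac{1}{\Gamma(-b/a)}\,e^{-a}\,|a|^{-b/a-1}
\qquad \text{as }\Delta\to\infty.
\]
Substituting the explicit expression for $d_2$ from \eqref{eq:d2} into the definition \eqref{eq:c1c2t} of $c_{2,t}$, the product $c_{2,t}\cdot\Gamma(-b/a)^{-1}e^{-a}|a|^{-b/a-1}$ produces exactly the quantity \eqref{eq.limacvf2}: the factors $\Gamma(-b/a)\,e^{a}$ from $d_2$ cancel the factors $\Gamma(-b/a)^{-1}\,e^{-a}$ coming from the asymptotics of $r_2$, leaving $\sigma^2 b\,|a|^{-1-b/a}\int_0^t r(t,s)(1+s)\,U(1-b/a,2,-a(1+s))\,ds$.

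For the degenerate case $b/a\in\{1,2,\ldots\}$ I would repeat the argument using \eqref{eq:secr2}, which yields
\[
\Delta^{1+b/a}\,\tilde r_2(t+\Delta)\longrightarrow \mathcal{W}(a,b,0)\,|a|^{-1-b/a}.
\]
The multiplier $\tilde c_{2,t}$ has the representation recorded just below \eqref{eq.c1c2tildet}; because $\tilde d_2(s)=\mathcal{W}(a,b,0)^{-1}b(1+s)U(1-b/a,2,-a(1+s))$, the Wronskian factor cancels against $\mathcal{W}(a,b,0)$ from the $\tilde r_2$ asymptotic, producing the same formula \eqref{eq.limacvf2}. This uniformity across the two cases is the only mildly delicate point: it is consistent precisely because the asymptotic constants of $r_2$ and $\tilde r_2$ have been computed in such a way that their product with the respective coefficients $d_2$ and $\tilde d_2$ always reduces to the same clean expression.

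The main obstacle is really organisational rather than analytical. All the asymptotic ingredients have been established in Section~\ref{sec.specialfn}, so nothing genuinely new has to be proved; what must be done is to verify that the exponential term $c_{1,t}r_1(t+\Delta)$ truly is $o(\Delta^{-(1+b/a)})$, to justify passing to the limit inside the (finite) integrals defining $c_{2,t}$ and $\tilde c_{2,t}$, and to check that the cancellations involving $\Gamma(-b/a)$ in the generic case and $\mathcal{W}(a,b,0)$ in the degenerate case deliver one and the same limit. Finiteness of $c_t(a,b)$ is automatic from continuity of the integrand on $[0,t]$, and if one wishes $c_t(a,b)\neq 0$ one may argue by the integral representation \eqref{eq:Uint} of $U$, which shows that $(1+s)U(1-b/a,2,-a(1+s))$ does not change sign on $[0,t]$ when $-b/a>0$, so that the integral cannot vanish except on a thin exceptional set of parameters.
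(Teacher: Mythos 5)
Your proposal is correct and follows essentially the same route as the paper: both start from the decomposition \eqref{eq:acvfspec1}, discard the $c_{1,t}r_1(t+\Delta)$ term because $r_1$ decays exponentially for $a<0$, use \eqref{eq.r2asy} (resp.\ \eqref{eq:secr2}) to extract the limit $c_{2,t}\Gamma(-b/a)^{-1}e^{-a}|a|^{-1-b/a}$ (resp.\ $\tilde c_{2,t}\mathcal{W}(a,b,0)|a|^{-1-b/a}$), and then substitute the formulae for $d_2$ and $\tilde d_2$ to obtain the common expression \eqref{eq.limacvf2}. The only superfluous point is the worry about ``passing to the limit inside the integrals'': $c_{2,t}$ and $\tilde c_{2,t}$ do not depend on $\Delta$, so no such interchange is needed.
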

Hence the process $X$ defined by \eqref{eq.longmemorysdde} is a long
memory process (i.e., for each fixed $t$,
$\int_{0}^{\infty}\text{Cov}(X(t),X(t+\Delta)\,d\dl = +\infty$) when
$a<0$, $b>0$ and $a+b<0$.

In the case when $a+b=0$, the covariance does not tend to zero as
$\Delta\to\infty$; instead
\begin{equation} \label{eq.covdellimconstaplusb0}
\lim_{\Delta\to\infty}\text{Cov}(X(t),X(t+\Delta))=c_t(a,b).
\end{equation}

In the special case $a<0$ and $b=0$, equation
\eqref{eq.longmemorysdde} reduces to an Ornstein-Uhlenbeck equation
and hence its autocovariance function is decays exponentially. This
is consistent with the result of Theorem \ref{thm:acfmem}, because
the value of $c_{t}$ is zero in \eqref{eq.limacvf2}. This leads us
to question under what conditions will the limit obtained in Theorem
\ref{thm:acfmem} be nonzero.

\begin{proposition}\label{prop:posacf}
    Let $b>0$. Then $\text{Cov}(X(t),X(t+\Delta))>0$ for all $\Delta>0$.
\end{proposition}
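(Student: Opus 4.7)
The plan is to reduce the claim to strict positivity of the resolvent $r$ and then establish that positivity by a first-zero argument based on the resolvent equation \eqref{eq.resolveq}. By the covariance formula \eqref{eq.autocovarianceaverage}, for any $t>0$ and $\Delta>0$,
\[
\Cov(X(t),X(t+\Delta)) = \sigma^2\int_0^t r(t,s)\,r(t+\Delta,s)\,ds,
\]
so it is enough to show that $r(t,s)>0$ for every $t\geq s\geq 0$ whenever $b>0$; given this, the two factors in the integrand are continuous and strictly positive on $[0,t]$, and the integral is strictly positive.

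To prove the positivity of $r$, fix $s\geq 0$ and write $r_s(t):=r(t,s)$ for $t\geq s$. Then $r_s$ is continuous with $r_s(s)=1$, and by \eqref{eq.resolveq} the derivative $r_s'$ exists and is continuous on $(s,\infty)$. Suppose for contradiction that $r_s$ has a zero beyond $s$, and put
\[
T_0:=\inf\{t>s : r_s(t)=0\}.
\]
By continuity and the initial value $r_s(s)=1$, we have $T_0>s$, $r_s(T_0)=0$, and $r_s(t)>0$ for all $t\in[s,T_0)$. Evaluating \eqref{eq.resolveq} at $t=T_0$ gives
\[
r_s'(T_0) = a\,r_s(T_0) + \frac{b}{1+T_0}\int_s^{T_0} r_s(u)\,du = \frac{b}{1+T_0}\int_s^{T_0} r_s(u)\,du > 0,
\]
because $b>0$ and $r_s>0$ on $(s,T_0)$. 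On the other hand, a mean value argument on $[T_0-h,T_0]$ with $h\downarrow 0$ produces points $c_h\in(T_0-h,T_0)$ with $r_s'(c_h)=-r_s(T_0-h)/h\leq 0$, and letting $h\to 0^+$ together with continuity of $r_s'$ yields $r_s'(T_0)\leq 0$. This contradicts the strict inequality above, so no such $T_0$ exists and $r_s(t)>0$ for every $t\geq s$.

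The only substantive step is the sign comparison at a putative first zero of $r_s$, which combines the strictly positive forcing term $\frac{b}{1+T_0}\int_s^{T_0}r_s(u)\,du$ (here the hypothesis $b>0$ is essential) with the elementary left-side inequality. Note that the argument uses neither the sign of $a$ nor any of the special-function information collected in Section~\ref{sec.specialfn}, so the conclusion is valid for every $a\in\mathbb{R}$ once $b>0$.
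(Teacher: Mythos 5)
Your proposal is correct and follows essentially the same route as the paper's proof: reduce the claim via \eqref{eq.autocovarianceaverage} to strict positivity of the resolvent $r(t,s)$, then rule out a first zero of $t\mapsto r(t,s)$ by evaluating \eqref{eq.resolveq} there and noting that the forcing term $\frac{b}{1+t}\int_s^{t}r(u,s)\,du$ is strictly positive while the left-hand derivative at a first zero must be nonpositive. Your mean-value justification of the inequality $r_s'(T_0)\leq 0$ is merely a more explicit rendering of a step the paper asserts without comment.
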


\begin{proposition}\label{thm:specfnexp2}
    If $a<0$, $b>0$ and $a+b<0$,
    then the limiting constant in \eqref{eq.limacvf2} obeys $c_{t}(a,b)>0$.
\end{proposition}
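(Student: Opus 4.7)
The plan is to read the sign of each factor in the explicit formula \eqref{eq.limacvf2}. The prefactor $\sigma^2 b\,|a|^{-1-b/a}$ is obviously strictly positive under the hypotheses $\sigma>0$, $b>0$, $a<0$, so the whole task reduces to showing that the integral
\[
    I(t) := \int_0^t r(t,s)\,(1+s)\,U(1-\ba,\,2,\,-a(1+s))\,ds
\]
is strictly positive. I will handle the two non-trivial factors in the integrand separately.

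First I would verify that $U(1-\ba,2,-a(1+s))>0$ for every $s\ge 0$. Under the assumption $a<0$, $b>0$ one has $-\ba>0$, hence $\alpha:=1-\ba>1>0$, so the integral representation \eqref{eq:Uint} applies. With $z:=-a(1+s)>0$, every factor in the integrand $u\mapsto \e^{-zu}u^{\alpha-1}(1+u)^{\beta-\alpha-1}$ is strictly positive on $(0,\infty)$, giving $U(1-\ba,2,-a(1+s))>0$. (The extra condition $a+b<0$ is not used here; it enters only through Theorem \ref{thm:acfmem} to guarantee that $c_t$ is indeed the correct limit.)

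Next, I would prove that $r(t,s)>0$ for all $t>s\ge 0$ when $b>0$, by a minimum-zero argument applied directly to \eqref{eq.resolv}. Fix $s\ge 0$. Since $r(s,s)=1$, continuity gives $r(\cdot,s)>0$ on some right-neighbourhood of $s$. Suppose for contradiction that $t_0:=\inf\{t>s:r(t,s)=0\}$ is finite. Then $r(u,s)>0$ on $[s,t_0)$ and $r(t_0,s)=0$, so from the left the derivative $\partial_t r(t_0,s)\le 0$. On the other hand, \eqref{eq.resolveq} evaluated at $t=t_0$ yields
\[
    \frac{\partial r}{\partial t}(t_0,s)
    = a\cdot 0 + \frac{b}{1+t_0}\int_s^{t_0} r(u,s)\,du > 0,
\]
because $b>0$ and the integrand is strictly positive on a set of positive measure. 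This contradiction establishes $r(t,s)>0$ for all $t>s\ge 0$.

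Combining the two sign observations, the integrand defining $I(t)$ is strictly positive on $(0,t)$ and continuous there, so $I(t)>0$; multiplying by the positive prefactor gives $c_t(a,b)>0$, as required. The only step that requires any care is the positivity of the resolvent $r(t,s)$; the positivity of $U$ is immediate from the integral representation, and the rest is arithmetic.
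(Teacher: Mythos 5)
Your proposal is correct and follows essentially the same route as the paper: the positivity of $r(t,s)$ is obtained by the identical minimum--zero argument that the paper gives in the proof of Proposition~\ref{prop:posacf}, and the positivity of $U(1-\ba,2,-a(1+s))$ comes from the integral representation \eqref{eq:Uint} exactly as in the paper's proof, with the positive prefactor handled by inspection. Your side remark that $a+b<0$ enters only through Theorem~\ref{thm:acfmem} (to guarantee $c_t$ is the relevant limit) is also consistent with the paper's argument.
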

The case when $b<0$ is more delicate to analyse. However, it can be
shown that if $t$ is sufficiently large, then $c_t(a,b)$ is
negative. We can also show that $c_t(a,b)\to 0$ a $t\to\infty$ in
the case when $b>0$ and that $c_t(a,b)\to-\infty$ as $t\to\infty$ in
the case that $b<0$. We also see that $\lim_{t\to\infty} c_t(a,b)$
is nontrivial in the case when $a+b=0$, and its limit will be of
interest later in this section. Accordingly, the asymptotic
behaviour of $c_t$ is recorded in the next result.
\begin{proposition} \label{prop.shortlong}
Suppose that $a<0$ and $a+b\leq 0$ and let $c_t(a,b)$ be defined by
\eqref{eq.limacvf2}.
\begin{itemize}
\item[(a)] If $b<0$ and $a+b<0$, then
\begin{equation}  \label{eq.ctasyblt0}
\lim_{t\to\infty} \frac{c_t(a,b)}{t^{b/a}} =\sigma^2   b |a|^{-3}
\frac{|b|+|a|}{2|b|+|a|}<0,
\end{equation}
and so $c_t\to-\infty$ as $t\to\infty$.
\item[(b)] If $b>0$ and $a+b<0$, then $c_t\to 0$ as $t\to\infty$. Furthermore
\begin{enumerate}
\item[(i)] If $2b+a>0$, then
\begin{equation} \label{eq.ctasybgt01}
\lim_{t\to\infty} \frac{c_t(a,b)}{t^{-b/a-1}} =
 \frac{\sigma^2b^2}{|a|^{2+2b/a}}  \int_0^\infty   (1+s)^2U^2\left(1-\frac{b}{a},2,-a(1+s)\right)\,ds>0;
\end{equation}
\item[(ii)] If $2b+a=0$, then
\begin{equation} \label{eq.ctasybgt02}
\lim_{t\to\infty} \frac{c_t(a,b)}{t^{-1/2} \log t} =  \sigma^2
\frac{1}{4}|a|^{-2}>0;
\end{equation}
\item[(iii)] If $2b+a<0$, then $c_t$ obeys \eqref{eq.ctasyblt0} with the limit on the righthand side being positive.
\end{enumerate}
\item[(c)] If $a+b=0$, then
\begin{equation} \label{eq.ctasyaplusb0}
\lim_{t\to\infty}
c_t(a,b)=\sigma^2\frac{b^2}{|a|^{2+2b/a}}\int_0^\infty (1+s)^2
U(1-b/a,2,|a|(1+s))^2\,ds.
\end{equation}
\end{itemize}
\end{proposition}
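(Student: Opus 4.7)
The plan is to use the resolvent decomposition \eqref{eq:resdecomp} together with an algebraic simplification. From \eqref{eq:d2} we have $(1+s)U(1-b/a,2,-a(1+s)) = d_2(s)/[\Gamma(-b/a)\,e^a\,b]$, so
\[
c_t(a,b) \;=\; \frac{\sigma^2 |a|^{-1-b/a}}{\Gamma(-b/a)\,e^a}\int_0^t r(t,s)\,d_2(s)\,ds
\;=\; K\bigl[r_1(t)A(t) + r_2(t)B(t)\bigr],
\]
where $A(t):=\int_0^t d_1(s)d_2(s)\,ds$, $B(t):=\int_0^t d_2(s)^2\,ds$ and $K:=\sigma^2|a|^{-1-b/a}/[\Gamma(-b/a)e^a]$. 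The degenerate case $b/a\in\mathbb{N}$, confined to case (a) with $b<0$, is handled via the analogous expression with $\tilde d_1,\tilde d_2,\tilde r_2$; by \eqref{eq.d1tilasy} and \eqref{eq.d2tilasy} the leading asymptotics are identical and the argument passes through unchanged.

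Using \eqref{eq:d1asy}--\eqref{eq:d2asy}, $d_1(s)d_2(s)\sim \Gamma(-b/a)e^a b |a|^{-1}e^{-as}$, and since $a<0$, $\int_0^t e^{-as}\,ds\sim e^{-at}/|a|$. Combined with \eqref{eq.r1asy} this gives $Kr_1(t)A(t)\sim \sigma^2 b|a|^{-3}\,t^{b/a}$ uniformly across all subcases. The behaviour of $B$ depends on the integrability of $d_2(s)^2\sim[\Gamma(-b/a)e^a b]^2|a|^{2b/a-2}\,s^{2b/a}$: it is integrable when $2b/a<-1$ (cases (b)(i), (c)), yielding $B(\infty)\in(0,\infty)$; for $2b/a=-1$ (case (b)(ii)), $B(t)\sim\text{const}\cdot\log t$; and for $2b/a>-1$ (cases (a), (b)(iii)), $B(t)\sim\text{const}\cdot t^{2b/a+1}/(2b/a+1)$. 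Combined with $r_2(t)\sim\text{const}\cdot t^{-b/a-1}$ from \eqref{eq.r2asy}, the $r_2B$ contribution (after multiplication by $K$) is of order $t^{-b/a-1}$ in (b)(i), $t^{-b/a-1}\log t$ in (b)(ii), and $t^{b/a}$ in (a) and (b)(iii).

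Comparing rates via $b/a-(-b/a-1)=(2b+a)/a$, the $r_2B$ term strictly dominates $r_1A$ when $2b+a>0$, producing \eqref{eq.ctasybgt01} after evaluation; in (b)(ii) it dominates by a factor of $\log t$, yielding \eqref{eq.ctasybgt02}. In cases (a) and (b)(iii) both contributions have order $t^{b/a}$ and must be summed: after simplification using $\Gamma(1-b/a)=-(b/a)\Gamma(-b/a)$, the limit of $c_t/t^{b/a}$ reduces to $\sigma^2 b|a|^{-3}\cdot(a+b)/(a+2b)$, which equals the expression in \eqref{eq.ctasyblt0} when $b<0$ (and is negative) and is positive when $b>0$ with $2b+a<0$. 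Case (c) is special: the identity $M(1,1,z)=e^z$ yields $r_2(t)\equiv e^{|a|}$, so $Kr_2(t)B(t)\to Ke^{|a|}B(\infty)$, while $Kr_1(t)A(t)\sim \sigma^2 b|a|^{-3}t^{-1}\to 0$; direct evaluation of $B(\infty)$ then recovers \eqref{eq.ctasyaplusb0}.

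The main technical obstacle is the additive step in cases (a) and (b)(iii): the factor $(2b/a+1)^{-1}$ from $B(t)$ must be combined with $r_1A$ while carefully tracking signs (since $a<0$ makes $|a|(2b/a+1)=-(2b+a)$), and the Gamma-function identities must be applied to cancel the $\Gamma(-b/a)e^a$ from $K$. A secondary subtlety arises in the logarithmic case (b)(ii), where one must verify that the $O(s^{2b/a-1})$ corrections to the integrand of $B$ contribute only $O(1)$ terms, so that the $\log t$ leading behaviour is not altered.
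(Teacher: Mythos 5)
Your proposal is correct and follows essentially the same route as the paper's proof: the same resolvent decomposition into an $r_1\int_0^t d_1d_2$ piece and an $r_2\int_0^t d_2^2$ piece (the paper writes the integrands as $d_i(s)(1+s)U(1-b/a,2,-a(1+s))$, which is just your $d_id_2/[\Gamma(-b/a)e^ab]$), the same trichotomy on $2b/a$ versus $-1$, and the same summation of the two $t^{b/a}$ contributions in cases (a) and (b)(iii). Your explicit remark that the absolute-value form of \eqref{eq.ctasyblt0} is literally valid only for $b<0$, and your use of $M(1,1,z)=e^z$ to make $r_2$ exactly constant in case (c), are minor refinements of the same argument.
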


%

In Theorem~\ref{thm:acfmem} we held the starting time, $t$, fixed
and observed the behaviour of the auto--covariance function as the
time lag, $\Delta$ tended to infinity. However it is perhaps more
typical, when testing for long memory (c.f. e.g. \cite{adk:2012}),
to fix the time lag and let the starting time tend to infinity. It
is then observed that this limiting auto--covariance function
depends only on the time lag $\Delta$ (so that the process is
transiently non--stationary) and the limiting autocovariance
function is integrable over $\Delta$, so that $X$ does not have long
memory.
\begin{theorem}\label{thm:acfshortmem}
Suppose that $a<0$ and $a+b\leq 0$. Suppose that $\psi\in
C([-1,0];\mathbb{R})$. Let $X$ be the unique continuous adapted
process which obeys \eqref{eq.longmemorysdde}.
Then, for all $\Delta\geq0$,
\begin{itemize}
\item[(a)] If $a+b<0$, then
\begin{equation} \label{eq.covaplusblt0}
    \lim_{t\to\infty}\text{Cov}(X(t),X(t+\Delta)) =
        \frac{\sigma^2}{2|a|}\e^{a\Delta}.
\end{equation}
\item[(b)] If $a+b=0$, then
\begin{multline} \label{eq.covaplusbeq0}
    \lim_{t\to\infty}\text{Cov}(X(t),X(t+\Delta))
    \\= \frac{\sigma^2}{2|a|}\e^{a\Delta}
    + \sigma^{2} \frac{b^2}{|a|^{2+2\ba}} \int_{0}^{\infty} (1+s)^2 U(1-\ba,2,|a|(1+s))^2 \,ds.
\end{multline}
\end{itemize}
\end{theorem}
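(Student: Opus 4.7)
The natural starting point is the covariance representation \eqref{eq.autocovarianceaverage} combined with the decomposition \eqref{eq:resdecomp} of the resolvent as a linear combination $r(t,s)=d_1(s)r_1(t)+d_2(s)r_2(t)$ in the non-degenerate case $b/a\notin\{1,2,\ldots\}$. Substituting this into \eqref{eq.autocovarianceaverage} and expanding the product produces
\begin{align*}
\Cov(X(t),X(t+\Delta))
&=\sigma^{2}r_1(t)r_1(t+\Delta)\int_0^t d_1(s)^2\,ds\\
&\quad+\sigma^{2}\bigl[r_1(t)r_2(t+\Delta)+r_2(t)r_1(t+\Delta)\bigr]\int_0^t d_1(s)d_2(s)\,ds\\
&\quad+\sigma^{2}r_2(t)r_2(t+\Delta)\int_0^t d_2(s)^2\,ds.
\end{align*}
The plan is to analyse each of these three terms separately as $t\to\infty$ with $\Delta\geq 0$ fixed, using the asymptotic estimates \eqref{eq.r1asy}, \eqref{eq.r2asy}, \eqref{eq:d1asy} and \eqref{eq:d2asy} collected in Section~\ref{sec.specialfn}.

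For the first term, $d_1(s)^2\sim|a|^{-2b/a}\e^{-2as}s^{-2b/a}$, so a Laplace-type estimate (the integrand is dominated by contributions near $s=t$ since $-a>0$) gives $\int_0^t d_1(s)^2\,ds\sim|a|^{-2b/a-1}\e^{-2at}t^{-2b/a}/2$, while $r_1(t)r_1(t+\Delta)\sim\e^{a(2t+\Delta)}|a|^{2b/a}t^{2b/a}$; multiplying yields limit $\sigma^{2}\e^{a\Delta}/(2|a|)$, independently of whether $a+b<0$ or $a+b=0$. The cross term is shown to decay at rate $1/t$ by a direct estimate of $\int_0^t d_1(s)d_2(s)\,ds\sim \e^a b|a|^{-2}\e^{-at}$ combined with $r_1(t)r_2(t+\Delta)\sim C\e^{at}t^{b/a}\cdot t^{-b/a-1}$. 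The third term is the interesting one: with $r_2(t)r_2(t+\Delta)\sim C\,t^{-2b/a-2}$ and $\int_0^t d_2(s)^2\,ds$ behaving like $t^{2b/a+1}$ when $2b/a>-1$, like $\log t$ when $2b/a=-1$ and like a finite limit when $2b/a<-1$, a case analysis using the constraint $b/a\geq-1$ (which follows from $a<0$ and $a+b\leq 0$) shows that when $a+b<0$ the third term decays to $0$; whereas when $a+b=0$, so $b/a=-1$ exactly, the exponents cancel, $r_2(t)r_2(t+\Delta)\to \e^{-2a}$, and the third term tends to
\[
    \sigma^{2}\e^{-2a}\int_0^{\infty}d_2(s)^2\,ds=\sigma^{2}b^{2}\int_0^{\infty}(1+s)^2 U(2,2,|a|(1+s))^2\,ds,
\]
upon substituting the explicit form of $d_2$ from \eqref{eq:d2} and simplifying using $\Gamma(1)=1$, $1-b/a=2$, and $|a|^{2+2b/a}=1$. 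Summing the three contributions gives \eqref{eq.covaplusblt0} in case (a) and \eqref{eq.covaplusbeq0} in case (b).

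In the degenerate situation $b/a\in\{1,2,\ldots\}$, which can only arise in case (a) (since $a+b=0$ forces $b/a=-1$), the decomposition \eqref{eq:resdecomp} uses $\tilde r_2,\tilde d_1,\tilde d_2$ in place of $r_2,d_1,d_2$. The asymptotic identities \eqref{eq:secr2}, \eqref{eq.d1tilasy} and \eqref{eq.d2tilasy} show that each of these shares exactly the same leading-order behaviour as its non-degenerate counterpart, so the computation above transfers verbatim. The principal technical obstacle is ensuring that the Laplace-type asymptotic of $\int_0^t d_1(s)^2\,ds$ is sharp enough to cancel the factor $\e^{2at}$ from $r_1(t)r_1(t+\Delta)$ to leading order, and that the cross-term integral $\int_0^t d_1(s)d_2(s)\,ds$ is estimated precisely enough to guarantee the $1/t$ decay; once these are established the remainder of the argument is essentially bookkeeping of the resulting constants.
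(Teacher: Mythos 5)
Your proposal is correct and follows essentially the same route as the paper: substitute the decomposition \eqref{eq:resdecomp} into \eqref{eq.autocovarianceaverage}, compute the limit of the $r_1r_1\int d_1^2$ term (which yields $\sigma^2\e^{a\Delta}/(2|a|)$), show the cross terms vanish, and run the case analysis on $2b/a$ versus $-1$ for the $r_2r_2\int d_2^2$ term, with the degenerate case $b/a\in\{1,2,\ldots\}$ handled by the matching asymptotics of $\tilde{r}_2,\tilde{d}_1,\tilde{d}_2$. The only (inconsequential) blemish is a dropped factor of $\Gamma(-b/a)$ in your stated asymptotic for $\int_0^t d_1(s)d_2(s)\,ds$, which does not affect the conclusion since that term tends to zero regardless.
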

It is interesting to remark that the differing rates of decay of the
autocovariance function recorded for the solution of
\eqref{eq.longmemorysdde} when $a<0$ and $a+b<0$ in the limits
\eqref{eq.covaplusblt0} and \eqref{eq.polydecayacvf} are \emph{not}
generally seen in autonomous affine differential equations. We show
below for asymptotically stationary scalar affine SFDEs which are
either finite delay or of Volterra type, that one is in a position
to characterise short or long memory by means of a single limiting
autocovariance function. Therefore, in the case of autonomous affine
equations, it does not matter whether one takes $\Delta\to\infty$ or
$t\to\infty$: as both limits leads to the same function, both give
the same classification to the process as being short or long
memory.

To make this claim more precise, and to find notation to connect
the behaviour of the autocovariance function of the solution of
\eqref{eq.longmemorysdde} with autocovariance functions of solutions
of such autonomous affine SFDEs, and to also contrast these
behaviours, we start by examining, for example, the solution $X$ of
an affine SFDE with finite delay. Such a process $X$ would be the
solution of
\begin{equation} \label{eq.affsde}
dX(t)=L(X_t)\,dt + \sigma\,dB(t), \quad t\geq 0; \quad X(t)=\psi(t),
\quad \text{$t\in [-\tau,0]$},
\end{equation}
where $L:C([-\tau,0];\mathbb{R})\to\mathbb{R}$ is a linear
functional and $\psi\in C([-\tau,0];\mathbb{R})$. Suppose that $r$
is the differential resolvent is given by
\[
r'(t)=L(r_t),\quad t>0; \quad r(0)=1; \quad r(t)=0 \text{ for $t\in
[-\tau,0)$}.
\]
We now summarise the situation in the following claim.
\begin{remark} \label{rem.acfSFDE}
If $X$ is the solution of \eqref{eq.affsde}, and the differential
resolvent $r$ associated with the drift of \eqref{eq.affsde} obeys
$r(t)\to 0$ as $t\to\infty$ and $r(t)$ is of one sign for all $t$
sufficiently large, then there are functions $\gamma$ and $c$ such
that
\begin{subequations} \label{eq.equallimacvfaffine}
\begin{gather} \label{eq.equallimacvfaffinet}
\lim_{t\to\infty} \frac{\text{Cov}(X(t),X(t+\Delta))}{\gamma(\Delta)}=1,\\
\label{eq.equallimacvfaffinedel}
\lim_{\Delta\to\infty} \frac{\text{Cov}(X(t),X(t+\Delta))}{\gamma(\Delta)}=c_t, \\
\label{eq.ctaffine} \lim_{t\to\infty} c_t=1.
\end{gather}
\end{subequations}
\end{remark}
A similar result pertains to Volterra equations with slowly decaying
autocovariance function. For instance, if $X$ is the solution of
\begin{equation} \label{eq.affineVol}
dX(t)=\left(-aX(t)+\int_0^t k(t-s)X(s)\,ds \right)\,dt
+\sigma\,dB(t), \quad t\geq 0; \quad X(0)=\xi,
\end{equation}
and we suppose that $k$ is a continuous, positive and integrable
function. Let the differential resolvent $r$ be the solution of
\[
r'(t)=-ar(t)+\int_0^t k(t-s)r(s)\,ds, \quad t\geq 0; \quad r(0)=1.
\]
\begin{remark} \label{rem.acfvol}
Suppose that $k$ is a positive, continuous and integrable function
which is subexponential and asymptotic to a decreasing function, and
moreover obeys $a>\int_0^\infty k(s)\,ds$. Then the autocovariance
function of the solution $X$ of \eqref{eq.affineVol} obeys
\eqref{eq.equallimacvfaffine}.
\end{remark}
We are now in a position to compare and contrast the situation with
\eqref{eq.equallimacvfaffine}, which pertains for solutions of
affine autonomous equations. For the average equation the
autocovariance function obeys
\begin{subequations} \label{eq.equallimacvfave}
\begin{gather} \label{eq.equallimacvavet}
\lim_{t\to\infty} \frac{\text{Cov}(X(t),X(t+\Delta))}{\gamma_1(\Delta)}=1,\\
\label{eq.equallimacvavedel}
\lim_{\Delta\to\infty} \frac{\text{Cov}(X(t),X(t+\Delta))}{\gamma_2(\Delta)}=c_t, \\
\label{eq.ctave}
 \lim_{t\to\infty} c_t=
 \left\{\begin{array}{cc}
 0, & b>0, \\
 -\infty, & b<0
 \end{array}
 \right.
\end{gather}
\end{subequations}
where $\gamma_1(\Delta)=\sigma^2/2|a| \cdot e^{a\Delta}$ and
$\gamma_2(\Delta)=\Delta^{-(1+b/a)}$. Therefore, the situation in
\eqref{eq.equallimacvfave} differs from the case in
\eqref{eq.equallimacvfaffine}, because there are two different rates
of decay in $\Delta$ in \eqref{eq.equallimacvavet} and
\eqref{eq.equallimacvavedel} and the function $c_t$ in
\eqref{eq.ctave} does not tend to a non--trivial finite limit as
$t\to\infty$.

Theorem~\ref{thm:acfshortmem} part (a) is consistent with
Theorem~\ref{thm.averagegaussian} part (b), because in the case when
$a+b<0$, the latter result shows that $X$ is pathwise asymptotic to
a process whose limiting autocovariance function is given in part
(a). The result of part (b) is also consistent with
Theorem~\ref{thm.averagegaussian}, because when $a+b=0$, we know
from part (c) of Theorem~\ref{thm.averagegaussian} that the solution
is asymptotic to $U$ plus a non--trivial limiting random variable,
whose presence is suggested by the form of the limiting
autocovariance function in part (b).

It is tempting to remark that when $b>0$,
Proposition~\ref{prop.shortlong} part (a) may be thought of as
partly reconciling the differing asymptotic behaviour of
$\text{Cov}(X(t),X(t+\Delta))$ recorded in Theorem~\ref{thm:acfmem}
and \ref{thm:acfshortmem} according as to whether $\Delta\to\infty$
or $t\to\infty$. This is because $c_t(a,b)\to 0$ as $t\to\infty$, so
that the ``long memory'' recorded in \eqref{eq.polydecayacvf}
becomes ever weaker as the start time $t$ becomes greater, and
therefore becomes closer to the ``short memory'' or exponential
decay in $\Delta$ in the limiting autocovariance function determined
in part (a) of Theorem~\ref{thm:acfshortmem}.

This heuristic explanation of the reconciliation of the asymptotic
behaviour of the autocovariance must however be taken with caution.
In particular, in the case when $b<0$, it is harder to forward with
equal confidence the same explanation as to the differing asymptotic
behaviour recorded in Theorem~\ref{thm:acfmem} and
\ref{thm:acfshortmem}. In this case,
Proposition~\ref{prop.shortlong} part (b) shows that $c_t(a,b)\to
-\infty$ as $t\to\infty$, suggesting that the polynomial decay in
the autocovariance function given in \eqref{eq.polydecayacvf} tends
to become \emph{stronger} as the start time is chosen to be very
large. On the other hand, the fact that $|c_t|$ has power law growth
which is less rapid as $t\to\infty$ (at a rate $t^{b/a}$ according
to \eqref{eq.ctasyblt0}) compared to the power law decay of
$\Cov(X(t),X(t+\Delta))$ as $\Delta\to\infty$ (which is at the rate
$\Delta^{-(1+b/a)}$) may point to a weakening overall correlation.

One situation in which it does not seem to matter in which order
limits are taken is when $a+b=0$. Taking the limit as
$\Delta\to\infty$ in \eqref{eq.covaplusbeq0} leads to
\[
\lim_{\Delta\to\infty}\lim_{t\to\infty}\text{Cov}(X(t),X(t+\Delta))
=
    \sigma^{2} \frac{b^2}{|a|^{2+2\ba}} \int_{0}^{\infty} (1+s)^2 U(1-\ba,2,|a|(1+s))^2
    \,ds.
    \]
    On the other hand, by \eqref{eq.covdellimconstaplusb0} and \eqref{eq.ctasyaplusb0}
    we have that
    \[
\lim_{t\to\infty}\lim_{\Delta\to\infty}\text{Cov}(X(t),X(t+\Delta))
=
    \sigma^{2} \frac{b^2}{|a|^{2+2\ba}} \int_{0}^{\infty} (1+s)^2 U(1-\ba,2,|a|(1+s))^2
    \,ds,
    \]
so the limits are equal.

%
\subsection{Non-stationary asymptotic behaviour}
In the case when $a<0$ and $b<0$, we have already seen that the solution of \eqref{eq.longmemorysdde} is asymptotically 
stationary, and when $a>0$ and $b<0$, the solution exhibits a.s. exponential growth. Therefore, we expect to see intermediate 
asymptotic behaviour on the boundary of these two parameter regions, where $a=0$ and $b<0$. In broad terms, we can establish that 
the solution behaves in some ways like a standard Brownian motion, in the sense that the solution is a Gaussian process which has 
asymptotically vanishing mean, variance which grows linearly in time, and experiences a.s. large fluctuations which satisfy the Law of 
the iterated logarithm. 
\begin{theorem}\label{thm.meanvarbessel}
Suppose that $\psi\in C([-1,0];\mathbb{R})$. Let $X$ be the unique
continuous adapted process which obeys \eqref{eq.longmemorysdde}. If
$a=0$ and $b<0$, then $\mathbb{E}[X(t)]\to 0$ as $t\to\infty$ and 
\begin{equation*}
    \lim_{t\to\infty} \frac{\text{Var}[X(t)]}{t} =\frac{1}{3}\sigma^2. 
\end{equation*}
\end{theorem}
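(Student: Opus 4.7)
The plan is to exploit the variation-of-parameters representation \eqref{eq.xrep} directly, so that
\[
    \mathbb{E}[X(t)]=x(t), \qquad \Var[X(t)]=\sigma^{2}\int_{0}^{t} r(t,s)^{2}\,ds,
\]
and then to insert the explicit Bessel-function decompositions \eqref{eq:detspec7} and \eqref{eq:resdecomp8} together with the large-argument asymptotics \eqref{eq:Besasy}.

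For the mean, the representation \eqref{eq:detspec7} gives $x(t)=c_{7}r_{7}(t)+c_{8}r_{8}(t)$, while \eqref{eq:Besasy} applied to $u(t):=2\sqrt{|b|(t+1)}$ yields
\[
    r_{7}(t),\,r_{8}(t)=O\bigl((t+1)^{-1/4}\bigr)\quad\text{as } t\to\infty.
\]
Since $c_{7},c_{8}$ are deterministic constants (given by \eqref{eq:cntsc7}, \eqref{eq:cntsc8}), this immediately forces $\mathbb{E}[X(t)]\to 0$.

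For the variance, I expand
\[
    r(t,s)^{2}=r_{7}(t)^{2}d_{7}(s)^{2}+2r_{7}(t)r_{8}(t)d_{7}(s)d_{8}(s)+r_{8}(t)^{2}d_{8}(s)^{2},
\]
so that
\begin{align*}
    \int_{0}^{t}r(t,s)^{2}\,ds
    &=r_{7}(t)^{2}\!\int_{0}^{t}\!d_{7}(s)^{2}\,ds+2r_{7}(t)r_{8}(t)\!\int_{0}^{t}\!d_{7}(s)d_{8}(s)\,ds\\
    &\quad+r_{8}(t)^{2}\!\int_{0}^{t}\!d_{8}(s)^{2}\,ds.
\end{align*}
Writing $\phi(u)=2\sqrt{|b|(u+1)}-\pi/4$ and $\theta(u)=2\sqrt{|b|(u+1)}-3\pi/4$, the asymptotics \eqref{eq:Besasy} give
\[
    r_{7}(t)\sim\pi^{-1/2}(|b|(t+1))^{-1/4}\cos\phi(t),\qquad r_{8}(t)\sim\pi^{-1/2}(|b|(t+1))^{-1/4}\sin\phi(t),
\]
and from \eqref{eq:cntsd78},
\[
    d_{7}(s)\sim\pi^{1/2}(|b|(s+1))^{1/4}\sin\theta(s),\qquad d_{8}(s)\sim\pi^{1/2}(|b|(s+1))^{1/4}\cos\theta(s).
\]
Thus $d_{7}(s)^{2}+d_{8}(s)^{2}\sim\pi\sqrt{|b|(s+1)}$. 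The key step is then to show via the substitution $u=2\sqrt{|b|(s+1)}$ and integration by parts applied to the oscillatory corrections (using that $\int^{U}u^{2}\cos(2u)\,du=O(U^{2})$) that
\[
    \int_{0}^{t}d_{7}(s)^{2}\,ds\sim\int_{0}^{t}d_{8}(s)^{2}\,ds\sim\tfrac{\pi\sqrt{|b|}}{3}(t+1)^{3/2},\qquad \int_{0}^{t}d_{7}(s)d_{8}(s)\,ds=O(t+1).
\]
Combined with $r_{7}(t)^{2}+r_{8}(t)^{2}\sim(\pi\sqrt{|b|(t+1)})^{-1}$ and $r_{7}(t)r_{8}(t)=O((t+1)^{-1/2})$, the two diagonal terms collapse, using $\cos^{2}\phi(t)+\sin^{2}\phi(t)=1$, to
\[
    \frac{t+1}{3}\bigl(\cos^{2}\phi(t)+\sin^{2}\phi(t)\bigr)=\frac{t+1}{3},
\]
while the cross term is $O(\sqrt{t})$, and so negligible after division by $t$.

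The delicate point, and the main obstacle, is controlling the oscillatory remainders, specifically showing that the integrals $\int_{0}^{t}d_{j}(s)^{2}\,ds$ are equivalent (not merely of the same order) to $\frac{\pi\sqrt{|b|}}{3}(t+1)^{3/2}$ with oscillatory corrections that are $o((t+1)^{3/2})$, and that the cross integral $\int_{0}^{t}d_{7}d_{8}\,ds$ is genuinely of lower order. Both are handled by expressing the squared and mixed Bessel-function products via $\sin^{2}=\tfrac12(1-\cos 2\cdot)$ and $2\sin\cos=\sin 2\cdot$, then integrating the resulting polynomial-times-trigonometric terms by parts in the variable $u=2\sqrt{|b|(s+1)}$; each integration by parts gains a factor of $u^{-1}$, which translates back to a factor $(s+1)^{-1/2}$ saving relative to the leading term. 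Pulling everything together, $\Var[X(t)]/t\to \sigma^{2}/3$ as $t\to\infty$.
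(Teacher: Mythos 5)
Your proposal is correct and follows essentially the same route as the paper: the same expansion of $\int_0^t r(t,s)^2\,ds$ via \eqref{eq:resdecomp8}, the same substitution $u=2\sqrt{|b|(s+1)}$ to evaluate the oscillatory integrals (the paper's Lemma~\ref{lm:intsub}), and the same final grouping through $r_7^2(t)+r_8^2(t)\sim \pi^{-1}|b|^{1/2}t^{-1/2}$ so that $\cos^2+\sin^2=1$ eliminates the oscillation. The only stylistic caution is that writing $r_7(t)\sim \pi^{-1/2}(|b|(t+1))^{-1/4}\cos\phi(t)$ is not a genuine asymptotic equivalence since the right-hand side vanishes infinitely often; the paper avoids this by working with additive remainders $r_7=g_1+R_7$, $R_7(t)=O(t^{-3/4})$, which is what your error-control paragraph implicitly does anyway.
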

We now state the result which deals with the magnitude of the large fluctuations of $X$.
\begin{theorem}\label{thm:Bes}
Suppose that $\psi\in C([-1,0];\mathbb{R})$. Let $X$ be the unique
continuous adapted process which obeys \eqref{eq.longmemorysdde}. If
$a=0$ and $b<0$, then
\begin{align*}
    \limsup_{t\to\infty} \frac{X(t)}{\sqrt{2t\log\log t}}=\frac{1}{\sqrt{3}}\sigma,
    \quad
    \liminf_{t\to\infty} \frac{X(t)}{\sqrt{2t\log\log t}}=-\frac{1}{\sqrt{3}}\sigma, \quad\text{a.s.} 
\end{align*}
\end{theorem}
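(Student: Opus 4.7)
The plan is to write $X(t) = x(t) + M(t)$ where $x(t) = c_7 r_7(t) + c_8 r_8(t)$ is the deterministic mean given by \eqref{eq:detspec7} and $M(t) := \sigma\int_0^t r(t,s)\,dB(s)$ is a centered Gaussian process. Since $J_0$ and $Y_0$ have amplitude of order $z^{-1/2}$ as $z\to\infty$ by \eqref{eq:Besasy}, one has $r_7(t), r_8(t) = O((1+t)^{-1/4})$, whence $x(t)\to 0$, and it suffices to prove the LIL-type result for $M$. By Theorem~\ref{thm.meanvarbessel}, $\Var(M(t)) = \Var(X(t))(1+o(1)) \sim \sigma^2 t/3$, which already fixes the conjectured constant $\sigma/\sqrt{3}$.

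For the upper bound I would argue along the integer sequence $t = n$. Since $M(n)$ is Gaussian with variance $\sim \sigma^2 n/3$, the standard Gaussian tail estimate combined with Borel--Cantelli gives $\limsup_n |M(n)|/\sqrt{2n\log\log n} \leq \sigma/\sqrt{3}$ almost surely. To pass to the continuous $\limsup$ I would control the increments $M(t) - M(n)$ for $t\in[n,n+1]$ via It\^o isometry applied to the splitting $M(t) - M(n) = \sigma\int_0^n [r(t,s) - r(n,s)]\,dB(s) + \sigma\int_n^t r(t,s)\,dB(s)$. Using $r(t,s) = d_7(s)r_7(t) + d_8(s)r_8(t)$ from \eqref{eq:resdecomp8} together with the bounds $|r_j'(t)| = O((1+t)^{-3/4})$ and $|d_j(s)| = O((1+s)^{1/4})$, both of which follow from \eqref{eq:Besasy}, one checks that $\sup_{t\in[n,n+1]}\Var(M(t) - M(n)) = O(1)$. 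A Borell--type maximal inequality for Gaussian processes then yields $\sup_{t\in[n,n+1]}|M(t) - M(n)| = o(\sqrt{n\log\log n})$ a.s., completing the upper bound.

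For the matching lower bound I would take a sparse geometric sequence $t_n = q^n$ with $q$ large and split $M(t_n) = \mu_n + Z_n$ with $\mu_n := \sigma\int_0^{t_{n-1}} r(t_n, u)\,dB(u)$, which is $\mathcal{F}_{t_{n-1}}$--measurable, and $Z_n := \sigma\int_{t_{n-1}}^{t_n} r(t_n, u)\,dB(u)$, which is independent of $\mathcal{F}_{t_{n-1}}$. The key variance estimates are $\Var(Z_n) \sim (1-q^{-3/2})\sigma^2 t_n/3$ and $\Var(\mu_n) \sim q^{-3/2}\sigma^2 t_n/3$, obtained by inserting the Bessel asymptotics into $r(t_n,u)^2$ and averaging the resulting oscillating factors whose mean is $1/2$. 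Borel--Cantelli~I applied to the Gaussian variable $\mu_n$ gives $|\mu_n| \leq (1+\eta)\sigma q^{-3/4}\sqrt{2t_n\log\log t_n/3}$ eventually a.s. A conditional Gaussian lower tail estimate for $Z_n$ combined with L\'evy's conditional form of Borel--Cantelli~II, applied to the events $\{M(t_n) > (1-\epsilon)\sigma\sqrt{2t_n\log\log t_n/3}\}$, then delivers $\limsup_n M(t_n)/\sqrt{2t_n\log\log t_n} \geq [(1-\epsilon) - (1+\eta)q^{-3/4}]\sigma/\sqrt{3}$; sending $\eta,\epsilon\to 0$ and $q\to\infty$ matches the upper bound. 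The $\liminf$ statement follows by applying the $\limsup$ result to $-X$, which solves the same equation with $B$ replaced by $-B$ and $\psi$ by $-\psi$.

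The main technical obstacle is the rigorous variance asymptotic $\int_{a}^{b} r(t_n, u)^2\,du \sim (b-a)/3$ when $0 \le a < b \le t_n$ are both of order $t_n$. Writing $r(t_n,u)$ as a linear combination of products of Bessel functions in $t_n$ and $u$, the squared integrand splits into a slowly varying part proportional to $A(t_n)^2 B(u)^2$ with $A(t_n)^2 = (\pi\sqrt{|b|(1+t_n)})^{-1}$ and $B(u)^2 = \pi\sqrt{|b|(1+u)}$ (which gives the coefficient $1/3$ after averaging $\cos^2$ to $1/2$), together with rapidly oscillating pieces of the form $A(t_n)^2 B(u)^2\cos(2\theta(u)\pm \mathrm{const})$ where $\theta(u) = 2\sqrt{|b|(1+u)} - \pi/4$. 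Showing these oscillatory contributions are of lower order requires integration by parts in $u$, exploiting $\theta'(u) = \sqrt{|b|/(1+u)}$; controlling the remainder in the Bessel expansions uniformly over $[t_{n-1}, t_n]$ is where most of the work lies.
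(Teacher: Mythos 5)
Your decomposition $X=x+M$ and the reduction to the centred Gaussian process $M(t)=\sigma\int_0^t r(t,s)\,dB(s)$ with $\text{Var}[M(t)]\sim\sigma^2 t/3$ is fine, and your lower bound is essentially sound: the variance splits $\Var[Z_n]\sim(1-q^{-3/2})\sigma^2 t_n/3$ and $\Var[\mu_n]\sim q^{-3/2}\sigma^2 t_n/3$ are correct (they follow from $\int_0^T r(t,u)^2\,du\sim \tfrac13 T^{3/2}t^{-1/2}$, which incidentally contradicts the blanket claim ``$\int_a^b r(t_n,u)^2\,du\sim(b-a)/3$'' in your last paragraph), the $Z_n$ are genuinely independent, and the second Borel--Cantelli lemma applies because $\sum_n (n\log q)^{-(1-\epsilon)^2}=\infty$. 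This is a classical blocking argument and is a genuinely different route from the paper, which instead extracts the leading oscillatory terms $g_1,\dots,g_4$ of the Bessel functions, evaluates the process along phase--locked sequences $t_n^{(k)}$ on which $g_1(t_n)\int_0^{t_n}g_2\,dB+g_3(t_n)\int_0^{t_n}g_4\,dB$ collapses to a single martingale, applies a discrete Gaussian LIL (Tomkins), and then densifies over $k$.

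The genuine gap is in your upper bound. For the events $A_n=\{|M(n)|>(1+\epsilon)\sqrt{2\Var[M(n)]\log\log n}\}$ the Gaussian tail estimate gives $\mathbb{P}(A_n)\asymp(\log n)^{-(1+\epsilon)^2}/\sqrt{\log\log n}$, and $\sum_n(\log n)^{-p}=+\infty$ for every $p>0$; so the first Borel--Cantelli lemma does \emph{not} yield $\limsup_n|M(n)|/\sqrt{2n\log\log n}\leq\sigma/\sqrt3$ along the integers. This is precisely the point at which every proof of an iterated--logarithm upper bound must exploit dependence: one needs geometric blocks $t_n=q^n$ (so that the tails become $\asymp(n\log q)^{-(1+\epsilon)^2}$, which are summable) together with a maximal inequality controlling $\sup_{t\in[t_n,t_{n+1}]}M(t)$ over an interval whose length is of order $t_n$, not $O(1)$. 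Your increment bound $\sup_{t\in[n,n+1]}\Var[M(t)-M(n)]=O(1)$ is correct but irrelevant to this obstruction, since it only patches between consecutive integers. A repaired version is feasible --- because $M$ is not a martingale in $t$ one cannot use Doob's inequality, but a Borell--TIS bound on each block $[q^n,q^{n+1}]$, with $\mathbb{E}[\sup_{\text{block}}M]=O(\sqrt{t_{n+1}})=o(\sqrt{t_n\log\log t_n})$ and maximal variance $\sim q\sigma^2t_n/3$, gives $\limsup_{t\to\infty}M(t)/\sqrt{2t\log\log t}\leq\sqrt{q}\,\sigma/\sqrt3$ and one lets $q\downarrow1$ --- but this argument, with its attendant entropy or chaining estimate for $\mathbb{E}[\sup_{\text{block}}M]$, is missing from what you wrote. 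The paper avoids this issue entirely: its sequence-based discrete LIL supplies matching upper and lower bounds along each $\{t_n^{(k)}\}$, and the continuous-time upper bound is recovered by letting the sequences become dense ($k\to\infty$) while the interpolation error $\sup_{t_n\leq t\leq t_{n+1}}|\tilde Y(t)-\tilde Y(t_n)|$ contributes only $O(2^{-k})$ to the normalised limit superior.
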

\begin{remark} Both Theorems~\ref{thm.meanvarbessel} and \ref{thm:Bes} show that, asymptotically, $X$ has behaviour is
somewhat akin to standard Brownian motion. In particular it is observed that the limiting
constant in Theorem~\ref{thm.meanvarbessel} is the square of that in Theorem~\ref{thm:Bes}. We are then drawn to
conjecture that the increments of $X$, under the hypotheses of Theorems~\ref{thm.meanvarbessel} and \ref{thm:Bes},
are asymptotically stationary.
\end{remark}

\section{Transient Asymptotic Behaviour}\label{sect:trans}
From \eqref{eq.xrep} we see that as $X$ depends upon $x$, we then
expect the asymptotic behaviour of $X$ to also depend upon $x$,
especially in the case when $|x(t)|\to\infty$ as $t\to\infty$. This
arises in two main situations: when $a<0$ and $a+b>0$, and when
$a>0$. We deal with the first of these cases first, and establish
that $|X(t)|\to\infty$ as $t\to\infty$ like a power of $t$. In fact,
$X$ can tend to $+\infty$ or to $-\infty$, each with positive
probability. Moreover, the choice of which limit is attained depends
on the path of the Brownian motion driving $X$, with the increments
of $B$ earlier in the path generally proving to be more influential
in deciding which limit is attained. The key to the proof of this
result, and to the others in this Section, hinge on the
representation of the solution $X$ of \eqref{eq.longmemorysdde} in
terms of the resolvent $r$ and mean $x$, as well as the asymptotic
analysis of these functions  given in Section~\ref{sec.specialfn}.
\begin{theorem}\label{cor:stochexp}
 Suppose that $a<0$, $a+b>0$. Suppose also that $\psi\in C([-1,0];\mathbb{R})$. Let $X$ be the unique continuous
adapted process which obeys \eqref{eq.longmemorysdde}. Then
\begin{itemize}
\item[(a)] There exists an $\mathcal{F}^B(\infty)$ measurable normal random variable $C$ such that
    \begin{equation}\label{eq:stochpolygrowth}
    \lim_{t\to\infty}\frac{X(t)}{t^{-(1+\ba)}}=C, \quad\text{a.s.}
    \end{equation}
\item[(b)] $C$ is given by
\begin{multline*}
C=|a|^{-1-\ba}b\left\{\psi(0)\,U\left(1-\ba,2,|a|\right)
      +\int_{-1}^{0}\psi(s)ds\, U\left(-\ba,1,|a|\right)\right\}\\
      +\sigma \int_0^\infty \frac{b}{|a|^{1+\ba}} (1+s) U(1-\ba,2,|a|(1+s))\, dB(s).
\end{multline*}
\item[(c)] The mean and variance of $C$ are given by
\begin{align}\label{eq.limconflu2}
    \mathbb{E}[C]&= |a|^{-1-\ba}b\left\{\psi(0)\,U\left(1-\ba,2,|a|\right)
      +\int_{-1}^{0}\psi(s)ds\, U\left(-\ba,1,|a|\right)\right\},\\
\label{eq:varCt}
    \Var[C] 
    &= \sigma^2 \frac{b^2}{|a|^{2+2\ba}} \int_{0}^{\infty}(1+s)^2 U(1-\ba,2,|a|(1+s))^2 ds>0.
\end{align}
\item[(d)] The mean and variance of $X$ obey
\begin{align*}
\lim_{t\to\infty} \frac{\mathbb{E}[X(t)]}{t^{-1-\ba}}=\mathbb{E}[C],
\quad \lim_{t\to\infty}
\frac{\text{Var}[X(t)]}{t^{-2-2\ba}}=\text{Var}[C].
\end{align*}
\end{itemize}
\end{theorem}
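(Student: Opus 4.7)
The plan is to exploit the variation of parameters formula \eqref{eq.xrep} together with the decomposition \eqref{eq:resdecomp} of the resolvent. Under the hypothesis $a<0$, $a+b>0$ we have $b/a<-1$, and in particular $b/a\notin\{1,2,\ldots\}$, so $r(t,s)=d_{1}(s)r_{1}(t)+d_{2}(s)r_{2}(t)$ for $t\geq s\geq 0$. Setting $I_{i}(t):=\int_{0}^{t}d_{i}(s)\,dB(s)$ for $i=1,2$, the representation \eqref{eq.xrep} becomes
\[
X(t)=x(t)+\sigma\, r_{1}(t)I_{1}(t)+\sigma\, r_{2}(t)I_{2}(t),
\]
so it suffices to determine the asymptotic behaviour of each of the three summands after division by $t^{-(1+\ba)}$.

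First I would show that $I_{2}(t)$ converges almost surely and in $L^{2}$ to a centred Gaussian random variable $I_{2}(\infty)$. By \eqref{eq:d2asy}, $d_{2}(s)^{2}\sim\mathrm{const}\cdot s^{2\ba}$ as $s\to\infty$; since $2\ba<-2$, $d_{2}\in L^{2}(\R_{+})$, so by It\^o's isometry the martingale $I_{2}$ is $L^{2}$-bounded and a.s.\ convergence follows from Doob's martingale convergence theorem, with $I_{2}(\infty)\sim N\bigl(0,\int_{0}^{\infty}d_{2}(s)^{2}\,ds\bigr)$. For the $r_{1}$-term, \eqref{eq:d1asy} yields $d_{1}(s)^{2}\sim|a|^{-2\ba}e^{-2as}s^{-2\ba}$, so
\[
\langle I_{1}\rangle(t)=\int_{0}^{t}d_{1}(s)^{2}\,ds\sim\tfrac{1}{2}|a|^{-1-2\ba}e^{-2at}t^{-2\ba},\quad t\to\infty.
\]
The Dambis--Dubins--Schwarz theorem represents $I_{1}$ as a time change of a standard Brownian motion, and the Brownian law of the iterated logarithm then gives $|I_{1}(t)|=O\bigl(e^{-at}t^{-\ba}\sqrt{\log t}\bigr)$ a.s. Combining this with \eqref{eq.r1asy} yields $r_{1}(t)I_{1}(t)=O(\sqrt{\log t})$ a.s., which is $o(t^{-1-\ba})$ since $-1-\ba>0$.

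For the remaining two summands, the decomposition $x(t)=c_{1}r_{1}(t)+c_{2}r_{2}(t)$ from \eqref{eq:detspec1} gives $c_{1}r_{1}(t)=O(e^{at}t^{\ba})=o(t^{-1-\ba})$ and, using \eqref{eq.r2asy},
\[
\frac{c_{2}r_{2}(t)}{t^{-1-\ba}}\longrightarrow c_{2}\cdot\frac{e^{-a}|a|^{-1-\ba}}{\Gamma(-\ba)};
\]
substituting formula \eqref{eq.c2} for $c_{2}$ produces exactly the non-stochastic part of $C$ stated in (b). Likewise $\sigma r_{2}(t)I_{2}(t)/t^{-1-\ba}\to\sigma\cdot\frac{e^{-a}|a|^{-1-\ba}}{\Gamma(-\ba)}\cdot I_{2}(\infty)$, and substituting \eqref{eq:d2} into $I_{2}(\infty)$ causes the factor $\Gamma(-\ba)e^{a}b$ to cancel cleanly and produces the stochastic integral stated in (b). This establishes (a) and (b). Part (c) then follows by taking $\mathbb{E}$ and, via It\^o's isometry, $\Var$ in (b); positivity of $\Var[C]$ uses the integral representation \eqref{eq:Uint} to see that $U(1-\ba,2,z)>0$ for $z>0$. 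For (d), \eqref{eq.xmean} gives $\mathbb{E}[X(t)]=x(t)$, while \eqref{eq.xrep} and It\^o's isometry give $\Var[X(t)]=\sigma^{2}\int_{0}^{t}r(t,s)^{2}\,ds$; expanding the square via the decomposition of $r$ yields three terms of orders $O(1)$, $O(t^{-1})$ and $\sim r_{2}(t)^{2}\int_{0}^{\infty}d_{2}(s)^{2}\,ds\sim\mathrm{const}\cdot t^{-2-2\ba}$ respectively, the last dominating since $-2-2\ba>0$, and its ratio with $t^{-2-2\ba}$ coincides with $\Var[C]$.

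The main obstacle is not conceptual but computational: careful bookkeeping of Gamma-function and exponential constants through \eqref{eq.r1asy}--\eqref{eq.r2asy}, \eqref{eq:d1asy}--\eqref{eq:d2asy} and \eqref{eq.c2}, and verifying the precise dominance ordering of the three pieces of $\Var[X(t)]$. The probabilistic ingredients ($L^{2}$-martingale convergence for $I_{2}$ and an LIL bound for $I_{1}$) are standard.
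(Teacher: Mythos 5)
Your proposal is correct and follows essentially the same route as the paper: the same decomposition of $X$ via \eqref{eq:detspec1}, \eqref{eq:resdecomp} and \eqref{eq.xrep}, $L^2$-martingale convergence for the $d_2$-integral, and an iterated-logarithm bound showing the $r_1$-terms are $o(t^{-1-b/a})$. The only cosmetic difference is in part (d), where the paper invokes a general result on convergence of moments of Gaussian limits (Shiryaev) while you compute $\operatorname{Var}[X(t)]$ directly from the It\^{o} isometry and the three-term expansion of $\int_0^t r(t,s)^2\,ds$; both arguments are valid.
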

Once the formula \eqref{eq:varCt} is established, it is clear that
$C$ is a proper Gaussian random variable, because  $s\mapsto
U(1-\ba,2,|a|(1+s))^2$ is asymptotic to a positive function and so
is itself eventually positive. Thus we have $C\not=0$ a.s.

In the case when $a>0$, we show that $X$ grows to plus or minus
infinity at an exponential rate, with a power law correction growth
factor. Once again, there is a positive probability of each of the
events $\{\lim_{t\to\infty}X(t)=+\infty\}$ and
 $\{\lim_{t\to\infty}X(t)=-\infty\}$ occurring.
\begin{theorem}\label{thm.bubbleaveexp}
Suppose that $a>0$. Suppose also that $\psi\in
C([-1,0];\mathbb{R})$. Let $X$ be the unique continuous
adapted process which obeys \eqref{eq.longmemorysdde}. 
\begin{itemize}
\item[(a)] There exists a $\mathcal{F}^B(\infty)$ normal random variable $C$ such that
\begin{equation} \label{eq.Xaveexpbubble}
    \lim_{t\to\infty} \frac{X(t)}{e^{at}t^{b/a}}=C,
    \quad \text{a.s.}
\end{equation}
\item[(b)] $C$ is given by
\begin{multline*}
C=
a^{\ba}\left\{a\psi(0)U\left(1+\ba,2,a\right)+b\int_{-1}^{0}\psi(s)\rd
s \, U\left(1+\ba,1,a\right) \right\}
\\+\sigma a^{1+\ba}\int_0^\infty e^{-as}(1+s) U(1+\ba,2,a(1+s))\,dB(s).
\end{multline*}
\item[(c)] 
The mean and variance of $C$ are given by
\begin{equation} \label{eq.cbanotinZmin}
\mathbb{E}[C]=
a^{\ba}\left\{a\psi(0)U\left(1+\ba,2,a\right)+b\int_{-1}^{0}\psi(s)\rd
s \, U\left(1+\ba,1,a\right) \right\}.
\end{equation}
and 
\[
    \Var[C] =\sigma^2 a^{2+2\ba}\int_{0}^{\infty} \e^{-2as}(1+s)^2 U(1+\ba,2,a(1+s))^2 \,ds>0.
\]
\item[(d)] The mean and variance of $X$ obey
\begin{equation*}
\lim_{t\to\infty}
\frac{\mathbb{E}[X(t)]}{e^{at}t^{b/a}}=\mathbb{E}[C], \quad
\lim_{t\to\infty}
\frac{\text{Var}[X(t)]}{e^{2at}t^{2b/a}}=\text{Var}[C].
\end{equation*}
\end{itemize}
\end{theorem}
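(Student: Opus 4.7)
The plan is to leverage the variation of parameters formula \eqref{eq.xrep} together with the explicit decompositions of the mean $x$ and the resolvent $r$ in terms of the confluent hypergeometric functions $r_3, r_4$ (or $r_3, \tilde r_4$ in the degenerate case $b/a \in \{-1,-2,\ldots\}$) developed in Section~\ref{sec.specialfn}. The guiding principle is that among the two linearly independent building blocks, the ``growing'' solution $r_4$ (respectively $\tilde r_4$), which behaves like $e^{at}t^{b/a}$, dominates, while the ``decaying'' solution $r_3\sim a^{-1-b/a}t^{-1-b/a}$ contributes only negligibly after normalisation by $e^{at}t^{b/a}$.

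First I would treat the non--degenerate case $b/a\notin\{-1,-2,\ldots\}$. Substituting \eqref{eq:resdecomp2} into \eqref{eq.xrep} and using \eqref{eq:detspec3} gives
\[
    X(t) = c_3 r_3(t) + c_4 r_4(t) + \sigma r_3(t) N_3(t) + \sigma r_4(t) N_4(t),
\]
where $N_i(t):=\int_0^t d_i(s)\,dB(s)$ for $i=3,4$. From \eqref{eq.d4asy} we have $d_4(s)\sim \Gamma(1+b/a)e^{-a}a^{-b/a}s^{-b/a}e^{-as}$, so $d_4\in L^2([0,\infty))$, and hence the martingale $N_4$ converges a.s.\ (and in $L^2$) to a proper Gaussian limit $N_4(\infty)$. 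On the other hand, $d_3(s)\sim b a^{b/a-1}s^{b/a}$ grows only polynomially (by \eqref{eq.d3asy}), so $\langle N_3\rangle_t$ grows polynomially, and a standard martingale large--deviation bound (or the law of the iterated logarithm for time--changed Brownian motion) gives polynomial growth of $|N_3(t)|$; combined with $r_3(t)\sim a^{-1-b/a}t^{-1-b/a}$ and the exponential growth of the normaliser $e^{at}t^{b/a}$, the contribution $\sigma r_3(t)N_3(t)/(e^{at}t^{b/a})$ vanishes a.s. Using \eqref{eq.r4asy} one finds $r_4(t)/(e^{at}t^{b/a})\to e^a a^{b/a}/\Gamma(1+b/a)$, so
\[
    \frac{X(t)}{e^{at}t^{b/a}} \;\to\; \frac{e^a a^{b/a}}{\Gamma(1+b/a)}\bigl\{c_4 + \sigma N_4(\infty)\bigr\} =: C \quad\text{a.s.}
\]
Plugging in the explicit formulae \eqref{eq.c4} and \eqref{eq:d4} and simplifying yields exactly the expression for $C$ in part (b); the formula for $\mathbb{E}[C]$ follows because the stochastic integral has zero mean, and $\Var[C]$ is obtained from It\^o's isometry. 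Positivity of $\Var[C]$ is immediate: the integrand $s\mapsto e^{-as}(1+s)U(1+b/a,2,a(1+s))$ is continuous and, by the integral representation \eqref{eq:Uint}, strictly positive for all $s\geq 0$ whenever $1+b/a>0$; in the general case one uses the asymptotic \eqref{eq:chg3b} to exclude the possibility that the integrand vanishes identically.

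For the degenerate case $b/a\in\{-1,-2,\ldots\}$, one repeats the argument with $\tilde r_4$ and $\tilde d_4$ in place of $r_4$ and $d_4$. The crucial asymptotics \eqref{eq:secr4} and \eqref{eq.d4tilasy} show that $\tilde r_4(t)\sim \mathcal{W}(a,b,0)a^{b/a}e^{at}t^{b/a}$ and $\tilde d_4(s)\sim \mathcal{W}(a,b,0)^{-1}a^{-b/a}s^{-b/a}e^{-as}$; these are precisely the limits needed so that the product $\tilde r_4(t)\tilde d_4(s)/(e^{at}t^{b/a})$ yields the same limiting integrand. In particular the constants $\mathcal{W}(a,b,0)$ cancel, so the formula for $C$ is the same across both cases. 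The estimate for $\tilde d_3$ \eqref{eq.d3tilasy} is only $o(s^{b/a+1})$ which is still polynomial, so the corresponding ``small'' term $r_3(t)\tilde N_3(t)/(e^{at}t^{b/a})$ still vanishes exponentially.

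Part (d) is obtained directly: $\mathbb{E}[X(t)]=x(t)$ by \eqref{eq.xmean} and the above analysis of $x$ yields the mean asymptotics; for the variance, $\Var[X(t)]=\sigma^2\int_0^t r(t,s)^2\,ds$ follows from It\^o's isometry, and substituting the decomposition of $r$ and normalising by $e^{2at}t^{2b/a}$ reduces (after showing cross--terms and the $r_3^2$--term are lower order) to $\sigma^2 r_4(t)^2/(e^{2at}t^{2b/a})\cdot\int_0^t d_4(s)^2\,ds$, which tends to $\Var[C]$ by monotone convergence. The main technical obstacle is verifying quantitatively, rather than heuristically, that the ``small'' resolvent term $r_3(t)N_3(t)$ is $o(e^{at}t^{b/a})$ a.s.\ in both the degenerate and non--degenerate cases; this requires combining the polynomial asymptotics of $d_3$ (or $\tilde d_3$) with a sharp enough pathwise bound on the quadratic variation $\langle N_3\rangle_t$ to deduce the almost sure rate, together with the trivial observation that $e^{at}$ beats any power of $t$.
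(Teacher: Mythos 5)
Your proposal is correct and follows essentially the same route as the paper: the same decomposition $X=c_3r_3+c_4r_4+\sigma r_3\int_0^\cdot d_3\,dB+\sigma r_4\int_0^\cdot d_4\,dB$, martingale convergence for the $d_4$ integral since $d_4\in L^2(0,\infty)$, and the law of the iterated logarithm for the time-changed martingale $\int_0^t d_3\,dB$ to kill the $r_3$ term against the exponential normaliser, with the degenerate case $b/a\in\{-1,-2,\ldots\}$ handled by the tilded functions exactly as you describe. The only cosmetic difference is that the paper divides by $r_4(t)$ before passing to the limit and cites Shiryaev for the transfer of a.s.\ convergence of Gaussian variables to convergence of means and variances in part (d), whereas you argue via It\^o's isometry directly; both are sound.
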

It can be seen from part (b) of Theorem~\ref{thm.bubbleaveexp} that
the limiting random variable in \eqref{eq.Xaveexpbubble} is a linear
functional of (the increments of) the Brownian motion $B$.
 The formula for $\mathbb{E}[C]$, given in part (c) of Theorem~\ref{thm.bubbleaveexp} is discussed~\cite{App_Dan}, 
where it is shown that in certain regions of the parameter space $\mathbb{E}[C]$ is non--zero and
hence the continuous random variable $C$ is non-zero almost surely. While part (a) is also
 dealt with in \cite{App_Dan} we present an alternative
method of proof in this paper, with the chief difference being that
a simpler formula for $C$ is attained in this paper from the the
variation of parameters representation (rather using an
admissibility approach as in \cite{App_Dan}).

In the $ab$-parameter space the line $a=0$ and $b>0$ is bordered by
a region wherein $X$ undergoes polynomial growth (covered by
Theorem~\ref{cor:stochexp}) and a region of exponential growth
(which is described by Theorem~\ref{thm.bubbleaveexp}).
As neither the representation \eqref{eq:detspec1} nor
\eqref{eq:detspec3} of the resolvent $r$ are valid on this line, it
therefore seems somewhat apt that $X$ should have a rate of faster
then polynomial yet slower than exponential growth on this line. A
precise asymptotic result is recorded in the next theorem.
\begin{theorem}\label{thm:modBes}
Suppose that $a=0$ and $b>0$. Suppose also that $\psi\in
C([-1,0];\mathbb{R})$. Let $X$ be the unique continuous adapted
process which obeys \eqref{eq.longmemorysdde}. Then
\begin{itemize}
\item[(a)] There exists an $\mathcal{F}^B(\infty)$ measurable normal random variable $C$ such that
\begin{equation*}
    \lim_{t\to\infty} \frac{X(t)}{t^{-1/4}\e^{2\sqrt{bt}}}=C, \quad \text{a.s.}
\end{equation*}
\item[(b)] $C$ is given by
\begin{multline*}
C=\frac{1}{b^{1/4}\sqrt{\pi}} \left( \psi(0)\sqrt{b}K_1(2\sqrt{b}) +
b\int_{-1}^{0}\psi(s)ds K_0(2\sqrt{b}) \right)
\\+\frac{\sigma b^{1/4}}{\sqrt{\pi}} \int_{0}^{\infty}  \sqrt{s+1} K_1(2\sqrt{b(s+1)})\,dB(s) ,
\end{multline*}
where $K_0$ and $K_1$ are modified Bessel functions of the second
kind.
\item[(c)] The mean and variance of $C$ are given by
\begin{align*}
\mathbb{E}[C]&= \frac{1}{\sqrt{\pi}}\left( \psi(0) b^{1/4}K_1(2\sqrt{b}) + b^{3/4}\int_{-1}^{0}\psi(s)ds K_0(2\sqrt{b}) \right),\\
\text{Var}[C]&=\frac{\sigma^2 b^{1/2}}{\pi} \int_{0}^{\infty} (s+1)
K_1^2(2\sqrt{b(s+1)})\,ds>0.
\end{align*}
\item[(d)] The mean and variance of $X$ obey
\begin{equation*}
\lim_{t\to\infty}
\frac{\mathbb{E}[X(t)]}{t^{-1/4}\e^{2\sqrt{bt}}}=\mathbb{E}[C],
\quad \lim_{t\to\infty}
\frac{\text{Var}[X(t)]}{t^{-1/2}\e^{4\sqrt{bt}}}=\text{Var}[C].
\end{equation*}
\end{itemize}
\end{theorem}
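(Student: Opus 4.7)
The plan is to combine the variation-of-parameters formula \eqref{eq.xrep} with the explicit resolvent decomposition \eqref{eq:resdecomp6} so as to represent $X(t)$ as a linear combination of the two linearly independent solutions $r_5, r_6$ of the underlying second-order equation with stochastic coefficients, and then to extract the dominant contribution from the asymptotic behaviour of the modified Bessel functions recorded in \eqref{eq:modBesasy}. Substituting $x(t)=c_5 r_5(t)+c_6 r_6(t)$ and $r(t,s)=d_5(s)r_5(t)+d_6(s)r_6(t)$ into Lemma~\ref{lm:avXsol} gives
\[
X(t) = \bigl(c_5 + \sigma M_5(t)\bigr) r_5(t) + \bigl(c_6 + \sigma M_6(t)\bigr) r_6(t),
\]
where $M_i(t) = \int_0^t d_i(s)\,dB(s)$ for $i\in\{5,6\}$.

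Using \eqref{eq:modBesasy} together with $\sqrt{b(t+1)}-\sqrt{bt}\to 0$ yields the leading-order asymptotics
\[
r_5(t) \sim \frac{1}{2\sqrt{\pi}\,b^{1/4}}\,t^{-1/4}\e^{2\sqrt{bt}}, \qquad r_6(t) \sim \frac{\sqrt{\pi}}{2\,b^{1/4}}\,t^{-1/4}\e^{-2\sqrt{bt}},
\]
and, applied to \eqref{eq:cntsd5}, the analogous asymptotics $d_5(s)\sim \sqrt{\pi}\,b^{1/4}(s+1)^{1/4}\e^{-2\sqrt{b(s+1)}}$ and $d_6(s)\sim (b^{1/4}/\sqrt{\pi})(s+1)^{1/4}\e^{2\sqrt{b(s+1)}}$. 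The key observation is that the first implies $\int_0^\infty d_5(s)^2\,ds<\infty$, so the Gaussian martingale $M_5$ is $L^2$-bounded and therefore converges almost surely (and in $L^2$) to a centred Gaussian random variable $M_5(\infty)$. Combining this with the asymptotic for $r_5$ and substituting \eqref{eq:cntsc5} and \eqref{eq:cntsd5}, I would obtain
\[
\frac{(c_5+\sigma M_5(t))\,r_5(t)}{t^{-1/4}\e^{2\sqrt{bt}}} \to \frac{c_5+\sigma M_5(\infty)}{2\sqrt{\pi}\,b^{1/4}} \quad\text{a.s.,}
\]
which after algebraic simplification reproduces the formula for $C$ claimed in part (b). Parts (c) and (d) would then follow from It\^o isometry together with a routine uniform-integrability check: the deterministic part $x(t)$ accounts for $\E[C]$, while $\Var[X(t)] = \sigma^2\int_0^t r(t,s)^2\,ds$ splits via the decomposition of $r$ into a leading $r_5^2$-term whose integral limit is $\Var[C]$, plus cross and $r_6^2$-terms that are exponentially subdominant.

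The principal technical hurdle is showing that the subdominant term $(c_6+\sigma M_6(t))\,r_6(t)/(t^{-1/4}\e^{2\sqrt{bt}})$ vanishes almost surely. Since $r_6(t)=O(t^{-1/4}\e^{-2\sqrt{bt}})$, the deterministic part is immediate. For the martingale term, I would exploit the Dambis--Dubins--Schwarz representation $M_6(t)=W(\langle M_6\rangle_t)$ for a Brownian motion $W$, note that $\langle M_6\rangle_t=\int_0^t d_6(s)^2\,ds = O(t\,\e^{4\sqrt{bt}})$, and apply the classical law of the iterated logarithm for $W$ to conclude $|M_6(t)| = O(\sqrt{t}\,\e^{2\sqrt{bt}}\sqrt{\log t})$ almost surely. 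Multiplying by $r_6(t)$ and dividing by the scaling $t^{-1/4}\e^{2\sqrt{bt}}$ then gives a bound of order $t^{1/2}\sqrt{\log t}\,\e^{-2\sqrt{bt}}$, which tends to zero and completes the proof.
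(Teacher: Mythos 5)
Your proposal is correct and follows essentially the same route as the paper: the same decomposition $X(t)=(c_5+\sigma M_5(t))r_5(t)+(c_6+\sigma M_6(t))r_6(t)$, the martingale convergence theorem applied to $M_5$ via $d_5\in L^2(0,\infty)$, and the law of the iterated logarithm for continuous martingales (which is exactly your Dambis--Dubins--Schwarz argument) to kill the subdominant $r_6 M_6$ term, with the quadratic-variation estimate $\langle M_6\rangle_t=O(t\e^{4\sqrt{bt}})$ matching the paper's $v_3$. Parts (c) and (d) are likewise handled in the paper by the standard fact that a.s. convergence of Gaussian random variables implies convergence of their means and variances, which is the content of your It\^o-isometry and uniform-integrability step.
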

We see from part (c) that $C$ has positive variance, so we have that
$C\neq 0$ a.s. Therefore the limit in part (a) is nontrivial a.s.

\begin{remark}
    If one scales \eqref{eq.xrep} by $r_2$ then we have
    \[
        X(t)/r_2(t) = x(t)/r_2(t) + \sigma\int_{0}^{t}H(t,s)\,dB(s)
    \]
    where $H(t,s)=r(t,s)/r_2(t)$. Under the hypothesis of Theorem~\ref{cor:stochexp}, it is immediate from
    Theorem 4 in~\cite{appdanlimop} 
     that as the stochastic integral $\int_{0}^{t}H(t,s)dB(s)$ converges to $C$ almost surely then the convergence must take place in mean square also.
    Similarly each of the results of Theorems~\ref{thm.bubbleaveexp},\ref{thm:modBes},\ref{thm:Bes} for almost sure convergence hold true for mean square convergence also.
\end{remark}

\section{Proofs from Section~\ref{sb:avprelim} and \ref{sect:recauto}}  \label{sect:avproof}
\subsection{Proof of Lemma~\ref{lm:avXsol}} \label{sect:avcntsproof}
    Existence and uniqueness  of the solution of \eqref{eq.longmemorysdde} is known from general theory of SFDEs, c.f. e.g. \cite{BergMiz, Mao2}. Thus we need only demonstrate that the representation \eqref{eq.xrep} satisfies the SFDE \eqref{eq.longmemorysdde}.

    Firstly observe that the resolvent equation, \eqref{eq.resolv}, may be re--expressed as
    \[
        r(t,s) = 1 + a\int_{s}^{t}r(u,s)\,du + \int_{s}^{t}\frac{b}{1+u}\int_{s}^{u}r(w,s)\,dw \,du, \quad t\geq s.
    \]
    Defining $Z=X-x$, we have that $Z$ obeys
    \begin{subequations}\label{eq:Zlongmem}
        \begin{align}
            Z(t) &= a\int_{0}^{t}Z(s)ds + \int_{0}^{t}\frac{b}{1+s}\int_{0}^{s}Z(u)\,du\, ds + \sigma B(t), \quad t\geq0, \\
            Z(t) &= 0, \quad t\in[-1,0].
        \end{align}
    \end{subequations}
    From the definition of $Z$ it is apparent that demonstrating the validity of \eqref{eq.xrep} is equivalent to showing
    that $Z$ obeys
    \begin{equation}\label{eq.zrep}
        Z(t) = \sigma\int_{0}^{t}r(t,s) dB(s), \quad t\geq0.
    \end{equation}
    Let $Z^*(t)=\sigma\int_{0}^{t}r(t,s) dB(s)$, $t\geq0$ and so $Z^*(0)=0$ as required.
    Now using the stochastic Fubini theorem
    \begin{align*}
        &a\int_{0}^{t}Z^*(s)ds + \int_{0}^{t}\frac{b}{1+s}\int_{0}^{s}Z^*(u)\,du\, ds + \sigma B(t) \\
        &\,=a\sigma\int_{0}^{t}\int_{0}^{s}r(s,w)\,dB(w)\,ds \\
        &\qquad+ \int_{0}^{t}\frac{b}{1+s}\int_{0}^{s}\sigma\int_{w=0}^{u}r(u,w) dB(w)\,du\,ds + \sigma B(t) \\
        &= \sigma\int_{0}^{t} \left( a\int_{w}^{t}r(s,w)\,ds + \int_{w}^{t}\frac{b}{1+s}\int_{w}^{s}r(u,w)\,du\,ds \right)dB(w) + \sigma B(t) \\
        &= \sigma\int_{0}^{t}\left( r(t,w)-1 \right)\,dB(w) + \sigma B(t) = \sigma\int_{0}^{t}r(t,w)\,dB(w) = Z^{*}(t).
    \end{align*}
    As $Z$ is the unique solution of \eqref{eq:Zlongmem} we have $Z=Z^*$ and hence $X$ has the representation \eqref{eq.xrep}.
\subsection{Proof of Proposition~\ref{lm:acf}}
    Let $t\geq0$ and $\dl\geq0$.
    Differentiating \eqref{eq.acfdef} with respect to $\dl$, using \eqref{eq.resolveq}, and by exchanging the order of integration and decomposing the integral, we get
    \begin{align*}
        \g_t'(\dl)&=\sigma^2\int_{0}^{t}r(t,s)\frac{\partial}{\partial\dl}r(t+\dl,s)\,ds\\
        &=\sigma^2a\int_{0}^{t}r(t,s)r(t+\dl,s)\,ds + \sigma^2\frac{b}{1+t+\dl}\int_{0}^{t}\int_{s}^{t+\dl}r(t,s)r(u,s)\,du\,ds \\
        &= a\g_{t}(\dl) + \frac{b\sigma^2}{1+t+\dl}\int_{0}^{t}\int_{0}^{u}r(t,s)r(u,s)ds\,du \\
        &\quad + \frac{b\sigma^2}{1+t+\dl}\int_{t}^{t+\dl}\int_{0}^{t}r(t,s)r(u,s)ds\,du.
      \end{align*}
      Next, because $r(w,s)=0$ for $0\leq w<s$, we see that
      $\int_0^u r(t,s)r(u,s)\,ds=\int_0^t r(t,s)r(u,s)\,ds$ for $u\in [0,t]$. Hence the two integrals on the right hand side can be combined. By making the substitution $w=u-t$,
      and then splitting the integral, we get
        \begin{align*}
      \g_t'(\dl)  &=a\g_{t}(\dl)+\frac{b\sigma^2}{1+t+\dl}\int_{0}^{t+\dl}\int_{0}^{u}r(t,s)r(u,s)ds\,du \\
        &=a\g_{t}(\dl)+\frac{b\sigma^2}{1+t+\dl}\int_{-t}^{\dl}\int_{0}^{t}r(t,s)r(t+w,s)ds\,dw \\
        &\quad +\frac{b\sigma^2}{1+t+\dl}\int_{-t}^{\dl}\int_{t}^{w+t}r(t,s)r(t+w,s)ds\,dw\\
        &=a\g_{t}(\dl)+\frac{b\sigma^2}{1+t+\dl}\int_{-t}^{\dl}\gamma_t(w)\,dw \\
        &\quad +\frac{b\sigma^2}{1+t+\dl}\int_{-t}^{\dl}\int_{t}^{w+t}r(t,s)r(t+w,s)ds\,dw,
    \end{align*}
   where we have used the definition of $\gamma_t(w)$ at the last step. It now suffices to show that the last integral is zero.
   We first decompose it according to
    \begin{align*}
        \int_{-t}^{\dl}&\int_{t}^{w+t}r(t,s)r(t+w,s)ds\,dw \\
        &= \int_{-t}^{0}\int_{t}^{w+t}r(t,s)r(t+w,s)ds\,dw + \int_{0}^{\dl}\int_{t}^{w+t}r(t,s)r(t+w,s)ds\,dw \\
        &= \int_{-t}^{0}\int_{t}^{w+t}r(t,s)r(t+w,s)ds\,dw,
    \end{align*}
    where the last integral is zero as when $w>0$, $r(t,s)=0$ for $s\in(t,t+w]$. Since $t\geq0$ and $w\in[-t,0]$, we have that $s\in(t+w,t]$ in the remaining integral and therefore $r(t+w,s)=0$. Thus,
    \[
        \int_{-t}^{\dl}\int_{t}^{w+t}r(t,s)r(t+w,s)ds\,dw=0,
    \]
    which proves \eqref{eq.acfdde}.

For $t\geq0$ and $-t\leq\dl\leq0$, we prove \eqref{eq.acfdde2} in a
similar manner to \eqref{eq.acfdde}. However, since $\Delta\in
[-t,0]$, we can show that $\gamma_t$ can be written in the form
\[
    \gamma_t(\Delta) = \sigma^2\int_{0}^{t+\Delta}r(t,s)r(t+\Delta,s)\,ds, \quad \Delta\in [-t,0].
\]
The function on the righthand side is differentiable with respect to
$\Delta$ on $(-t,0)$, because $\Delta\mapsto r(t+\Delta,s)$ is
differentiable on $(-t,0)$. Now, differentiating with respect to
$\Delta$, we get
\[
\gamma_t'(\Delta) =
\sigma^2\int_{0}^{t+\Delta}r(t,s)\frac{\partial}{\partial\Delta}r(t+\Delta,s)\,ds
+\sigma^2 r(t,t+\Delta)r(t+\Delta,t+\Delta) , \quad \Delta\in
(-t,0),
\]
and proceeding in a manner similar to the proof of \eqref{eq.acfdde}
above, we establish \eqref{eq.acfdde2}.

\subsection{Proof of Theorem~\ref{thm:acfmem}}
In the case when $b/a\not\in \{1,2,\ldots\}$, from
\eqref{eq:acvfspec1}, we have
\[
\frac{\text{Cov}(X(t),X(t+\Delta))}{\Delta^{-(1+b/a)}}=c_{1,t}\frac{r_1(t+\Delta)}{\Delta^{-(1+b/a)}}
+c_{2,t}\frac{r_2(t+\Delta)}{(t+\Delta)^{-(1+b/a)}}\cdot\left(\frac{t+\Delta}{\Delta}\right)^{-(1+b/a)}.
\]
By \eqref{eq.r1asy} and \eqref{eq.r2asy} we have that
\[
\lim_{\Delta\to\infty}
\frac{\text{Cov}(X(t),X(t+\Delta))}{\Delta^{-(1+b/a)}}
=c_{2,t}\frac{1}{\Gamma(-b/a)}e^{-a}|a|^{-1-b/a}.
\]
Since $c_{2,t}$ is given by \eqref{eq:c1c2t} and $d_2$ by
\eqref{eq:d2}, we obtain
 \[
 \lim_{\dl\to\infty}\frac{\Cov(X(t),X(t+\Delta))}{\dl^{-(1+\ba)}}=c_{t}(a,b)
 \]
 where $c_t$ is given by \eqref{eq.limacvf2}. The proof in the case when  $b/a\in \{1,2,\ldots\}$ proceeds in the same manner, making use of \eqref{eq.r1asy} and \eqref{eq:secr2} to obtain
 \[
\lim_{\Delta\to\infty}
\frac{\text{Cov}(X(t),X(t+\Delta))}{\Delta^{-(1+b/a)}}
=\tilde{c}_{2,t} \mathcal{W}(a,b,0) |a|^{-1-b/a}.
\]
From this and the formula for $\tilde{c}_{2,t}$ in
\eqref{eq.c1c2tildet} we obtain the desired representation.
%
%

\subsection{Proof of Proposition~\ref{prop:posacf}}
Since $\Cov(X(t),X(t+\Delta)$ obeys \eqref{eq.autocovarianceaverage}
for $t\geq 0$ and $\Delta\geq 0$, we see that it suffices to show
that $r(t,s)>0$ for all $t\geq s>0$.

To this end, fix $s>0$ and write $r_{s}(t) = r(t,s)$ for $t\geq s$.
Then \eqref{eq.resolveq} and \eqref{eq.resolvic} are equivalent to
    \begin{equation*}
            r_{s}'(t) = ar_{s}(t)+b\frac{1}{1+t}\int_{s}^{t}r_{s}(u)\rd u, \quad t\geq s;\quad r_s(s)=1.
        \end{equation*}
    Note that $r_s\in \text{C}^{1}(s,\infty)$. Hence there exists some $\epsilon>0$ such that $r(t,s)>0$ for $t\in(s,s+\epsilon)$.
    Suppose there exists a minimal $t_0>s$ such that $r_{s}(t_0)=0$, but $r_{s}(t)>0$ for $s\leq t\leq t_0$. Then $r_{s}'(t)\leq0$ and
    \begin{equation*}
         0\geq r_{s}'(t_0) = ar_{s}(t_0) + b\frac{1}{1+t}\int_{s}^{t_0}r_{s}(u) \rd u
        = b\frac{1}{1+t}\int_{s}^{t_0}r_{s}(u) \rd u >0,
    \end{equation*}
    a contradiction. Hence $r(t,s)=r_{s}(t)>0$ for all $t\geq s$, and so
  $\text{Cov}(X(t),X(t+\Delta))>0$ for all $t>0$ and $\Delta\geq 0$.

\subsection{Proof of Proposition~\ref{thm:specfnexp2}}
Since $a<0$, by Theorem~\ref{thm:acfmem} we have that $c_t(a,b)$
obeys \eqref{eq.limacvf2}.
In the proof of Proposition~\ref{prop:posacf} we showed that
$r(t,s)>0$ for all $t\geq s>0$. Therefore, to show that $c_t(a,b)>0$
for all $t>0$, by examining the integral in \eqref{eq.limacvf2}, it
suffices to show that $U(1-b/a,2,|a|(1+t))>0$ for $t\geq 0$. Since
$a<0$ and $b>0$, we have $1-b/a>0$, so by the integral
representation \eqref{eq:Uint}, we have
\begin{align*}
    U(1-\ba,2,-a(1+t)) &= \frac{1}{\Gamma(1-\ba)}\int_{0}^{\infty}\e^{a(1+t)s}s^{-\ba}(1+s)^{\ba}\rd s, \quad \text{ for $t\geq 0$.} 
\end{align*}
Thus $ U(1-b/a,2,-a(1+t))>0$ for all $t\geq 0$ and $a<0<b$, and the
claim is proven.

\subsection{Proof of Proposition~\ref{prop.shortlong}}
Suppose that $b/a\not\in\{1,2,\ldots\}$. We estimate the asymptotic
behaviour of $c_t$ in \eqref{eq.limacvf2} by substituting
$r(t,s)=r_1(t)d_1(s)+r_2(t)d_2(s)$ and estimating the asymptotic
behaviour of each resulting integral in
\begin{multline} \label{eq.ctfirst}
 c_t/(\sigma^2   b |a|^{-1-b/a})= r_1(t) \int_0^t d_1(s)(1+s)U(1-b/a,2,-a(1+s))\,ds
 \\+  r_2(t) \int_0^t d_2(s)(1+s)U(1-b/a,2,-a(1+s))\,ds.
\end{multline}

We start with the first integral in \eqref{eq.ctfirst}. By
\eqref{eq:chg3b} we have that
\begin{equation} \label{eq.Ussasy}
(1+s)U(1-b/a,2,-a(1+s))\sim |a|^{b/a-1}s^{b/a} \text{ as
$s\to\infty$}.
\end{equation}
Therefore by  \eqref{eq:d1asy} we have that
\[
d_1(s)(1+s)U(1-b/a,2,-a(1+s))  \sim  \frac{1}{|a|}  e^{-as}   \text{
as $s\to\infty$}.
\]
Using the fact that $a<0$, by \eqref{eq.r1asy} we get 
\begin{equation} \label{eq.term1ctasy}
 r_1(t)\int_0^t d_1(s)(1+s)U(1-b/a,2,-a(1+s))\,ds \sim |a|^{b/a-2}  t^{b/a}, \quad \text{ as $t\to\infty$.}
\end{equation}

In the case when $b/a\in \{1,2,\ldots\}$, $c_t$ is given by
\begin{multline} \label{eq.cttildefirst}
 c_t/(\sigma^2   b |a|^{-1-b/a})= r_1(t) \int_0^t \tilde{d}_1(s)(1+s)U(1-b/a,2,-a(1+s))\,ds
 \\+  \tilde{r}_2(t) \int_0^t \tilde{d}_2(s)(1+s)U(1-b/a,2,-a(1+s))\,ds.
\end{multline}
Again, we estimate the asymptotic behaviour of the first integral.
By \eqref{eq.Ussasy} and \eqref{eq.d1tilasy} we have that
\[
\tilde{d}_1(s)(1+s)U(1-b/a,2,-a(1+s))  \sim |a|^{-1} \e^{-as},
\text{ as $s\to\infty$}.
\]
Using the fact that $a<0$ and that $r_1$ obeys \eqref{eq.r1asy}, we get 
\begin{equation} \label{eq.term1ctasytilde}
 r_1(t)\int_0^t \tilde{d}_1(s)(1+s)U(1-b/a,2,-a(1+s))\,ds \sim |a|^{b/a-2}  t^{b/a}, \quad \text{ as $t\to\infty$.}
\end{equation}

We next prepare estimates of the integrand in the second integral in
\eqref{eq.ctfirst} and \eqref{eq.cttildefirst}. When $b/a\not\in
\{1,2,\ldots\}$, we use \eqref{eq:d2asy} and \eqref{eq.Ussasy} to
obtain
\begin{equation} \label{eq.d2sUsasy}
d_2(s)(1+s)U(1-b/a,2,-a(1+s)) \sim \Gamma(-\ba)\e^{a} b
|a|^{2b/a-2}\, s^{2b/a}   \text{ as $s\to\infty$}.
\end{equation}
When $b/a\in \{1,2,\ldots\}$, we use \eqref{eq.d2tilasy} and
\eqref{eq.Ussasy} to obtain
\begin{equation} \label{eq.d2tilsUsasy}
\tilde{d}_2(s)(1+s)U(1-b/a,2,-a(1+s)) \sim
\frac{1}{\mathcal{W}(a,b,0)}b  |a|^{2b/a-2}s^{2b/a}
 \text{ as $s\to\infty$}.
\end{equation}

We now prove part (a). If $b<0$ and $b/a\not\in\{1,2,\ldots\}$, we
have that $2b/a>0$, so using \eqref{eq.d2sUsasy}
\[
\int_0^t d_2(s)(1+s)U(1-b/a,2,-a(1+s))\,ds \sim \Gamma(-\ba)\e^{a} b
|a|^{2b/a-2}\, t^{2b/a+1}\frac{1}{2b/a+1},
\]
as $t\to\infty$. Therefore by \eqref{eq.r2asy}, as $t\to\infty$, we
have that
\begin{equation} \label{eq.int2ctnondeg}
r_2(t)\int_0^t d_2(s)(1+s)U(1-b/a,2,-a(1+s))\,ds
 \sim b |a|^{b/a-3}\, \frac{1}{2b/a+1}  t^{b/a}. 
\end{equation}
In the case that $b<0$ and $b/a\in\{1,2,\ldots\}$ using
\eqref{eq.d2tilsUsasy} gives
\[
\int_0^t \tilde{d}_2(s)(1+s)U(1-b/a,2,-a(1+s))\,ds \sim
\frac{1}{\mathcal{W}(a,b,0)}b  |a|^{2b/a-2}t^{2b/a+1}\frac{1}{2b/a+1}, 
\]
as $t\to\infty$. Therefore by \eqref{eq:secr2} we have that
\begin{equation} \label{eq.int2ctdeg}
\tilde{r}_2(t)\int_0^t \tilde{d}_2(s)(1+s)U(1-b/a,2,-a(1+s))\,ds
\sim   b  |a|^{b/a-3} t^{b/a}\frac{1}{2b/a+1}, \quad \text{ as
$t\to\infty$}.
\end{equation}
Examining \eqref{eq.int2ctnondeg} and \eqref{eq.int2ctdeg}, we see
that the second integrals on the righthand sides of
\eqref{eq.ctfirst} and \eqref{eq.cttildefirst} have the same
asymptotic behaviour. Similarly, by \eqref{eq.term1ctasy} and
\eqref{eq.term1ctasytilde},  we see that the first integrals on the
righthand sides of \eqref{eq.ctfirst} and \eqref{eq.cttildefirst}
have the same asymptotic behaviour. Hence, if $b<0$, we have that
\[
 \frac{c_t}{\sigma^2   b |a|^{-1-b/a}} \sim |a|^{b/a-2} \left( b |a|^{-1}\, \frac{1}{2b/a+1}  + 1\right) t^{b/a}, \quad \text{ as $t\to\infty$},
\]
which implies \eqref{eq.ctasyblt0}.

We now prove part (b). In this case $b>0$. Therefore, $b/a\not\in
\{1,2\ldots\}$, so we estimate the asymptotic behaviour of each
integral on the right hand side of \eqref{eq.ctfirst}. In
particular, the estimate \eqref{eq.term1ctasy} holds for the fist
integral. To analyse the asymptotic behaviour of the second term, we
must consider three subcases: $2b/a<-1$, $2b/a=-1$ and $2b/a>-1$.

\textbf{Case 1: $2b/a<-1$.} If $2b/a<-1$, by  \eqref{eq.d2sUsasy} we
have
\begin{multline*}
\lim_{t\to\infty}
\int_0^t d_2(s)(1+s)U(1-b/a,2,-a(1+s))\,ds \\
= 
 \Gamma(-\ba)\e^{a} b\int_0^\infty   (1+s)^2U(1-b/a,2,-a(1+s))^2\,ds,
\end{multline*}
where we have used \eqref{eq:d2} to obtain the formula for the
limit.
Hence by \eqref{eq.r2asy} we have
\begin{multline*}
r_2(t) \int_0^t d_2(s)(1+s)U(1-b/a,2,-a(1+s))\,ds \\
\sim
 b |a|^{-b/a-1}  \int_0^\infty   (1+s)^2U(1-b/a,2,-a(1+s))^2\,ds \cdot  t^{-b/a-1}
\text{ as $t\to\infty$}.
\end{multline*}
Since $2b/a<-1$, we have that $b/a<-1-b/a<0$, so using the last
estimate, \eqref{eq.term1ctasy} and \eqref{eq.term1ctasy} we have
\eqref{eq.ctasybgt01}. Notice also that $c_t\to 0$ as $t\to\infty$.

\textbf{Case 2: $2b/a=-1$.} If $2b/a=-1$, by  \eqref{eq.d2sUsasy}
and  \eqref{eq.r2asy} we have
\begin{multline*}
r_2(t) \int_0^t d_2(s)(1+s)U(1-b/a,2,-a(1+s))\,ds \sim
 b|a|^{b/a-3} t^{-1/2} \log t
\\ =  \frac{1}{2}|a|^{-5/2} t^{-1/2} \log t
 , \quad \text{as $t\to\infty$.}
\end{multline*}
Using this estimate, \eqref{eq.ctfirst} and \eqref{eq.term1ctasy},
together with the fact that $b/a=-1/2$, we have
\eqref{eq.ctasybgt02}. Notice also that $c_t\to 0$ as $t\to\infty$.

\textbf{Case 3: $2b/a>-1$.} If $2b/a>-1$, then by
\eqref{eq.d2sUsasy} and \eqref{eq.r2asy} we have
\[
r_2(t)\int_0^t d_2(s)(1+s)U(1-b/a,2,-a(1+s))\,ds
 \sim
  b|a|^{b/a-3}   \, t^{b/a} \frac{1}{2b/a+1}   \text{ as $t\to\infty$}.
\]
Using this estimate, \eqref{eq.ctfirst} and \eqref{eq.term1ctasy},
we have \eqref{eq.ctasyblt0}. Since $b>0$ and $a<0$, we have $c_t\to
0$ as $t\to\infty$.

Finally we prove part (c), or \eqref{eq.ctasyaplusb0}, in the case
that $a+b=0$.
We consider the asymptotic behaviour of the first term on the right
hand side of \eqref{eq.ctfirst}. We can still apply
\eqref{eq.term1ctasy} so that
\begin{multline*}
r_1(t)\int_0^t d_1(s)(1+s) U(1-b/a,2,|a|(1+s)) \,ds\\
\sim |a|^{b/a-2}
t^{b/a}=|a|^{b/a-2} t^{-1}, \quad\text{ as $t\to\infty$}.
\end{multline*}
Therefore
\begin{equation} \label{eq.ctfirstaplusbeq01}
\lim_{t\to\infty} r_1(t)\int_0^t d_1(s)(1+s) U(1-b/a,2,|a|(1+s))
\,ds = 0.
\end{equation}
Since $a+b=0$ and $r_2$ obeys \eqref{eq.r2asy}, we have
$r_2(t)\to\frac{1}{\Gamma(-b/a)}e^{-a}|a|^{-1-b/a}$ as $t\to\infty$.
Since $d_2$ is given by \eqref{eq:d2}, we have that
\begin{multline*}
\int_0^t d_2(s)(1+s)U(1-b/a,2,|a|(1+s))\,ds \\= \e^{a} b
\Gamma(-\ba) \int_0^t (1+s)^2 U^2(1-b/a,2,|a|(1+s))\,ds.
\end{multline*}
By \eqref{eq:chg3b}, we have that $(1+s)^2U^2(1-b/a,2,|a|(1+s))\sim
(|a|s)^{2b/a}=(|a|s)^{-2}$ as $s\to\infty$.
Therefore it follows that the integral tends to a finite limit and
therefore
\begin{multline*}
\lim_{t\to\infty} r_2(t) \int_0^t d_2(s)(1+s)U(1-b/a,2,-a(1+s))\,ds
 \\=
 |a|^{-1-b/a} b  \int_0^\infty (1+s)^2 U^2(1-b/a,2,|a|(1+s))\,ds.
\end{multline*}
Combining this limit with \eqref{eq.ctfirstaplusbeq01} and taking
the limit as $t\to\infty$ in \eqref{eq.ctfirst}, we obtain
\eqref{eq.ctasyaplusb0}.

%

\subsection{Proof of Theorem \ref{thm:acfshortmem}}
Let $t\geq 0$ and $\Delta\geq 0$. Suppose first that $b/a\not\in
\{1,2,\ldots\}$. Using \eqref{eq.autocovarianceaverage} and
\eqref{eq:resdecomp} one obtains
    \begin{multline}  \label{eq.shortmemmaster}
       \text{Cov}(X(t),X(t+\Delta)) \\
       = \sigma^2 r_{1}(t)r_{1}(t+\Delta)\int_{0}^{t}d_{1}^2(s) \rd s
         + \sigma^2 r_{1}(t)r_{2}(t+\Delta)\int_{0}^{t}d_{1}(s)d_2(s) \rd s \\
          + \sigma^2 r_{2}(t)r_{2}(t+\Delta)\int_{0}^{t}d_{2}^2(s) \rd s
         + \sigma^2 r_{1}(t+\Delta)r_{2}(t)\int_{0}^{t}d_{1}(s)d_2(s) \rd s.
    \end{multline}
Our plan is now to determine the exact asymptotic behaviour of each
of the four terms in \eqref{eq.shortmemmaster} as $t\to\infty$ (for
fixed $\Delta\geq 0$).
Since $a<0$ from \eqref{eq:d1asy} we have 
\[
d_1^2(t)\sim |a|^{-2b/a}  e^{-2at} t^{-2b/a}, \quad \text{ as
$t\to\infty$}.
\]
Therefore, one can use the last limit and l'H\^opital's rule to show
that
\[
\int_0^t d_1^2(s)\,ds \sim \frac{1}{2|a|}\cdot |a|^{-2b/a}  e^{-2at}
t^{-2b/a}, \quad \text{ as $t\to\infty$}.
\]
By \eqref{eq.r1asy}, and the above limit, we have
\begin{align}
\lefteqn{\lim_{t\to\infty}
r_1(t)r_1(t+\Delta)\int_0^t d_1(s)^2\,ds}\nonumber\\
&= e^{a\Delta} \lim_{t\to\infty} \biggl\{\frac{r_1(t)}{e^{at}
|a|^{b/a}t^{b/a}}\frac{r_1(t+\Delta)}{e^{a(t+\Delta)}
|a|^{b/a}(t+\Delta)^{b/a}} e^{2at} |a|^{2b/a} t^{b/a}
(t+\Delta)^{b/a}
\nonumber\\
&\quad\times \frac{1}{2|a|}\cdot |a|^{-2b/a}  e^{-2at} t^{-2b/a}
\frac{\int_0^t d_1(s)^2\,ds}{\frac{1}{2|a|}\cdot |a|^{-2b/a}  e^{-2at} t^{-2b/a}}\biggr\}\nonumber\\
\label{eq.term1cov} &= \frac{1}{2|a|}e^{a\Delta} \lim_{t\to\infty}
\biggl\{
(t+\Delta)^{b/a}
\cdot 
t^{-b/a}
\biggr\} = \frac{1}{2|a|}e^{a\Delta}.
\end{align}


For the second and fourth terms in \eqref{eq.shortmemmaster}, we use
\eqref{eq:d1asy} and \eqref{eq:d2asy} to get
\[
    \int_{0}^{t}d_{1}(s)d_{2}(s) \rd s \sim |a|^{-2}\e^{a} b \, \Gamma(-\ba) \e^{-at}, \quad \text{as $t\to\infty$}.
\]
Thus, using \eqref{eq.r1asy} and \eqref{eq.r2asy}, we get
\begin{align}
   \lefteqn{\lim_{t\to\infty} r_{1}(t)r_{2}(t+\Delta)\int_{0}^{t}d_{1}(s)d_2(s) \rd s}\nonumber\\
     &=
     \lim_{t\to\infty}
     \biggl\{
     \frac{r_1(t)}{e^{at} |a|^{b/a}t^{b/a}}\frac{r_2(t+\Delta)}{\frac{1}{\Gamma(-b/a)} e^{-a} |a|^{-b/a-1} (t+\Delta)^{-b/a-1}} \nonumber\\
     &\quad \times e^{at} |a|^{b/a}t^{b/a} \frac{1}{\Gamma(-b/a)} e^{-a} |a|^{-b/a-1} (t+\Delta)^{-b/a-1}\nonumber\\
    &\qquad \times
     |a|^{-2}\e^{a} b \, \Gamma(-\ba) \e^{-at}
    \frac{\int_{0}^{t}d_{1}(s)d_2(s) \rd s}{|a|^{-2}\e^{a} b \, \Gamma(-\ba) \e^{-at}}\biggr\}\nonumber\\
    \label{eq.term2cov}
    &=b|a|^{-3} \lim_{t\to\infty}
     t^{b/a}
     (t+\Delta)^{-b/a-1}
     =0.
    \end{align}
Similarly, we can show that the fourth term on the righthand side of
\eqref{eq.shortmemmaster} obeys
\begin{equation} \label{eq.term4cov}
\lim_{t\to\infty}r_{1}(t+\Delta)r_{2}(t)\int_{0}^{t}d_{1}(s)d_2(s)
\rd s =0.
\end{equation}


Finally, we consider the third term on the righthand side of
\eqref{eq.shortmemmaster}. Using \eqref{eq:d2asy} we have
\[
d_{2}^2(s) \sim \Gamma(-\ba)\e^{2a} b^2 |a|^{2\ba-2}\, s^{2b/a},
\text{ as $s\to\infty$}.
\]
If $2b/a<-1$, we have that $d_2^2\in L^1(0,\infty)$. In the case
that $a+b<0$, we have that $r_2(t)\to 0$ as $t\to\infty$, so
\begin{equation}\label{eq.thirdtermshort}
\lim_{t\to\infty}r_{2}(t)r_{2}(t+\Delta)\int_{0}^{t}d_{2}^2(s) \rd
s=0.
\end{equation}
In the case that $2b/a<-1$ and $a+b=0$, we have from
\eqref{eq.r2asy} that $r_2(t)\to
\frac{1}{\Gamma(-b/a)}|a|^{-b/a-1}e^{-a}$ as $t\to\infty$. Then from
\eqref{eq:d2} we have
\begin{multline} \label{eq.thirdtermaplusbeq0}
\lim_{t\to\infty}r_{2}(t)r_{2}(t+\Delta)\int_{0}^{t}d_{2}^2(s) \rd s
= \frac{1}{\Gamma(-b/a)^2}|a|^{-2b/a-2}e^{-2a}\int_0^\infty d_{2}^2(s) \rd s\\
= b^2|a|^{-2b/a-2}\int_0^\infty (1+s)^2 U^2(1-\ba,2,|a|(1+s)) \rd s.
\end{multline}

If $2b/a=-1$, we have that
\[
\int_0^t d_2^2(s)\,ds \sim \Gamma(-\ba)\e^{2a} b^2 |a|^{2\ba-2}\,
\log t, \quad \text{as $t\to\infty$.}
\]
Since $b/a=-1/2$, we have that $r_2(t)\sim kt^{-3/2}$ as
$t\to\infty$ for some $k\neq 0$, and therefore
\eqref{eq.thirdtermshort} holds. If $2b/a>-1$, then
\[
\int_0^t d_{2}^2(s)\,ds \sim \Gamma(-\ba)\e^{2a} b^2 |a|^{2\ba-2}\,
t^{2b/a+1}\frac{1}{2b+a}, \quad\text{ as $t\to\infty$}.
\]
Using \eqref{eq.r2asy} we have
\[
r_2(t)r_2(t+\Delta) \int_0^t d_{2}^2(s)\,ds \sim
\frac{1}{\Gamma(-b/a)}  |a|^{-4} b^2  \frac{1}{2b+a} t^{-1},
\]
as $t\to\infty$. Hence \eqref{eq.thirdtermshort} holds.

Next, in the case when $b/a\not\in \{1,2,\ldots\}$ and $a+b<0$, by
taking the limit as $t\to\infty$
 on both sides of \eqref{eq.shortmemmaster}, using the limits  \eqref{eq.term1cov}, \eqref{eq.term2cov} and \eqref{eq.term4cov}
 on the first, second and fourth terms, and \eqref{eq.thirdtermshort} on the third term on the righthand side of
  \eqref{eq.shortmemmaster}, we obtain \eqref{eq.covaplusblt0}.

On the other hand, when $a+b=0$, by taking the limit as $t\to\infty$
 on both sides of \eqref{eq.shortmemmaster}, using the limits  \eqref{eq.term1cov}, \eqref{eq.term2cov} and \eqref{eq.term4cov}
 on the first, second and fourth terms, and  \eqref{eq.thirdtermaplusbeq0} on the third term on the righthand side of
  \eqref{eq.shortmemmaster}, we obtain \eqref{eq.covaplusbeq0}.

%

%
For the case when $b/a\in\{1,2,...\}$, then one decomposes
$\Cov(X(t),X(t+\Delta))$ as in \eqref{eq.shortmemmaster} above but
where $\tilde{r}_2$, $\tilde{d}_1$ and $\tilde{d}_2$ play the role
of $r_2$, $d_1$ and $d_2$. Moreover as can be seen from
\eqref{eq:secr2}, \eqref{eq.d1tilasy} and \eqref{eq.d2tilasy},
$\tilde{r}_2,\tilde{d}_1$ and $\tilde{d}_2$ have the same asymptotic
behaviour as $r_2$, $d_1$ and $d_2$ (to within a multiplicative
constant) and so one can deduce the limits \eqref{eq.covaplusblt0}
and \eqref{eq.covaplusbeq0} as before.

\subsection{Proof of Remark~\ref{rem.acfvol}}
Since $a>\int_0^\infty k(s)\,ds$, we have that $r$ is in
$L^1(0,\infty)$, and moreover that $\int_0^\infty r(s)\,ds
=1/(a-\int_0^\infty k(s)\,ds)$. Therefore, we have that
\[
\lim_{t\to\infty} \text{Cov}(X(t),X(t+\Delta))=\sigma^2\int_0^\infty
r(s)r(s+\Delta)\,ds=:\gamma(\Delta).
\]
Next, suppose that $k$ is a subexponential function. Then
\[
\lim_{t\to\infty} \frac{r(t)}{k(t)}=\frac{1}{(a-\int_0^\infty
k(s)\,ds)^2}.
\]
See e.g.,\cite{apprey:2002}. We determine the asymptotic behaviour of $\gamma(\Delta)$ as
$\Delta\to\infty$ under the additional assumption that $k$ is
asymptotic to a decreasing function. We then have
\begin{multline*}
\frac{\gamma(\Delta)}{k(\Delta)}-\sigma^2\int_0^\infty r(s)\,ds\cdot
\frac{1}{(a-\int_0^\infty k(s)\,ds)^2}
\\=\sigma^2\int_0^\infty r(s)\left(\frac{r(s+\Delta)}{k(s+\Delta)}- \frac{1}{(a-\int_0^\infty k(s)\,ds)^2}\right)\cdot \frac{k(s+\Delta)}{k(\Delta)}\,ds\\
+\sigma^2\int_0^\infty r(s)
\left(\frac{k(s+\Delta)}{k(\Delta)}-1\right)\,ds \cdot
\frac{1}{(a-\int_0^\infty k(s)\,ds)^2}.
\end{multline*}
The first term has zero limit as $\Delta\to\infty$. The second term
can be shown to have a zero limit as $\Delta\to\infty$ by splitting
the integral into over the intervals $[0,T)$ and $[T,\infty)$ for
$T>0$ so large that $\int_T^\infty |r(s)|\,ds<\epsilon
(a-\int_0^\infty k(s)\,ds)^2$, where $\epsilon>0$ is taken
arbitrarily small. Then, letting $\Delta\to\infty$, we see that the
first of these two integrals tends to zero, while for the second
using the monotonicity of $k$, the limit superior of the absolute
value is less than $2\sigma^2 \epsilon$. Letting $\epsilon\to 0$
confirms that
 \begin{equation*}
\lim_{\Delta\to\infty} \frac{\gamma(\Delta)}{k(\Delta)}=\sigma^2
\frac{1}{(a-\int_0^\infty k(s)\,ds)^3}.
\end{equation*}
Now we fix $t$ and compute the autocovariance function. We have
\begin{multline*}
\lim_{\Delta\to\infty}\frac{\text{Cov}(X(t),X(t+\Delta))}{\gamma(\Delta)}
=\lim_{\Delta\to\infty} \frac{k(\Delta)}{\gamma(\Delta)}\cdot
\sigma^2\int_0^t r(s)\frac{r(s+\Delta)}{k(s+\Delta)}\cdot
\frac{k(s+\Delta)}{k(\Delta)}\,ds
\\=\frac{(a-\int_0^\infty k(s)\,ds)^3}{\sigma^2} \sigma^2\frac{1}{(a-\int_0^\infty k(s)\,ds)^2}
\int_0^t r(s)\,ds.
\end{multline*}
Therefore, we have
\[
\lim_{\Delta\to\infty}\frac{\text{Cov}(X(t),X(t+\Delta))}{\gamma(\Delta)}
=\frac{\int_0^t r(s)\,ds}{\int_0^\infty r(s)\,ds}=:c_t,
\]
so clearly $c_t\to 1$ as $t\to\infty$. Therefore the autocovariance
function obeys \eqref{eq.equallimacvfaffine}.

\subsection{Proof of Remark~\ref{rem.acfSFDE}}
If $r(t)\to 0$ as $t\to\infty$, it is known that $r\in
L^1(0,\infty)$ and that $r$ decays to zero exponentially. As a
consequence
\[
\lim_{t\to\infty} \text{Cov}(X(t),X(t+\Delta))=\sigma^2\int_0^\infty
r(s)r(s+\Delta)\,ds=:\gamma(\Delta).
\]
Let us further suppose, for example, that $r$ is asymptotic to a
function of one sign. Then there exists $n\in\mathbb{Z}^+$ and
$\alpha>0$ such that $r(t)/(t^{n-1} e^{-\alpha t})\to C\neq 0$ as
$t\to\infty$. We now determine the asymptotic behaviour of
$\gamma(\Delta)$ as $\Delta\to\infty$. We start by writing
\begin{multline*}
\frac{\gamma(\Delta)}{\Delta^{n-1} e^{-\alpha \Delta}}-\sigma^2
C\int_0^\infty e^{-\alpha s} r(s)\,ds
\\=
\sigma^2\int_0^\infty e^{-\alpha s}
r(s)\left(\frac{r(s+\Delta)}{(s+\Delta)^{n-1} e^{-\alpha
(\Delta+s)}}-C\right) \cdot \frac{(s+\Delta)^{n-1} }{\Delta^{n-1}
}\,ds
\\+
\left\{ \sigma^2 C\int_0^\infty e^{-\alpha s} r(s) \cdot
\frac{(s+\Delta)^{n-1} }{\Delta^{n-1} }\,ds-\sigma^2 C\int_0^\infty
e^{-\alpha s} r(s)\,ds\right\}.
\end{multline*}
It can then be shown that the limits as $\Delta\to\infty$ of the two
terms on the righthand side is zero, so that
\[
\lim_{\Delta\to\infty} \frac{\gamma(\Delta)}{\Delta^{n-1} e^{-\alpha
\Delta}} =\sigma^2 C\int_0^\infty e^{-\alpha s} r(s)\,ds=:c^\ast.
\]
Considering now the limit when $\Delta\to\infty$ for $t$ fixed, we
have
\begin{align*}
\lefteqn{\frac{\text{Cov}(X(t),X(t+\Delta))}{\gamma(\Delta)}}\\
&= \frac{\text{Cov}(X(t),X(t+\Delta))}{\Delta^{n-1} e^{-\alpha
\Delta}}
\cdot\frac{\Delta^{n-1} e^{-\alpha \Delta}}{\gamma(\Delta)}\\
&=\sigma^2\int_0^t  r(s)e^{-\alpha s}
\frac{r(s+\Delta)}{(s+\Delta)^{n-1} e^{-\alpha (\Delta+s)}} \cdot
\frac{(s+\Delta)^{n-1} }{\Delta^{n-1} }\,ds \cdot \frac{\Delta^{n-1}
e^{-\alpha \Delta}}{\gamma(\Delta)}.
\end{align*}
Therefore we have
\[
\lim_{\Delta\to\infty}
\frac{\text{Cov}(X(t),X(t+\Delta))}{\gamma(\Delta)} =
\frac{1}{c^\ast}C\sigma^2\int_0^t  r(s)e^{-\alpha s} \,ds=:c_t.
\]
We see that $c_t\to 1$ as $t\to\infty$. Therefore
\eqref{eq.equallimacvfaffine} holds.
\section{Proof of Results in Section~\ref{sect:trans}} \label{sect:transproofs}
In this section, we give the proofs of the growth rates of $X$
stated in Section~\ref{sect:trans}.
\subsection{Proof of Theorem \ref{thm.bubbleaveexp}}
For $b/a\not\in\{-1,-2,...\}$, from \eqref{eq:detspec3},
\eqref{eq:resdecomp2} and \eqref{eq.xrep}, we can write $X$
according to 
\begin{equation}\label{eq:stochdecomp}
    X(t) = r_{3}(t)c_{3} + r_{4}(t)c_{4} + \sigma r_{3}(t)\int_{0}^{t}d_{3}(s)\,dB(s) + \sigma r_{4}(t)\int_{0}^{t}d_{4}(s)\,dB(s).
\end{equation}
We have already deduced the asymptotic behaviour of $r_3$, $r_4$,
$d_3$ and $d_4$ in \eqref{eq.r3asy}, \eqref{eq.r4asy},
\eqref{eq.d3asy} and \eqref{eq.d4asy}. We recapitulate their
limiting behaviour now:
\begin{align*}
    r_{3}(t) \sim a^{-1-\ba}t^{-1-\ba}, &\quad r_{4}(t)\sim \frac{1}{\Gamma(1+\ba)}\e^{a(1+t)}a^{\ba}t^{\ba},
     \quad \text{ as $t\to\infty$},\\
    d_{3}(s) \sim b \, a^{-1+\ba}s^{\ba},  &\quad d_{4}(s) \sim \Gamma(1+\ba)a^{-\ba}\e^{-a(1+s)}s^{-\ba}, \quad \text{ as $s\to\infty$}.
\end{align*}
Dividing across \eqref{eq:stochdecomp} by $r_4(t)$ yields
\begin{equation} \label{eq:stochdecomp2}
    \frac{X(t)}{r_4(t)} = \frac{r_3(t)}{r_4(t)}c_3 + c_4 + \sigma \frac{r_3(t)}{r_4(t)}\int_0^t d_3(s)\,dB(s) + \sigma\int_0^t d_4(s)\,dB(s).
\end{equation}
The asymptotic behaviour of the first and last terms is readily
estimated. Since $a>0$, $r_{3}(t)/r_4(t)\to 0$ as $t\to\infty$.
$a>0$ also implies $d_{4}\in L^2(0,\infty)$. Therefore by the
Martingale Convergence Theorem for continuous martingales (cf.,
e.g.,~\cite[Thm. V.1.8]{RevYor}) we have that
\[
\lim_{t\to\infty} \sigma\int_{0}^{t}d_4(s) \rd
B(s)=\sigma\int_{0}^{\infty}d_4(s) \rd B(s),\quad\text{a.s.}
\]
If
\begin{equation}\label{eq:r4domr3}
    \lim_{t\to\infty}\frac{r_3(t)}{r_4(t)}\int_0^t d_3(s) \rd B(s) =0, \quad \text{a.s.}
\end{equation}
then we obtain
\[
    \lim_{t\to\infty}\frac{X(t)}{r_{4}(t)} = c_{4} + \sigma\int_{0}^{\infty}d_4(s) \rd B(s)=:C_4, \quad \text{a.s.}
\]
By \eqref{eq.r4asy}
we therefore have
\[
    \lim_{t\to\infty}\frac{X(t)}{\e^{at}t^{\ba}}
    = \Gamma(1+\ba)e^a a^{\ba}c_{4} + \Gamma(1+\ba)e^a a^{\ba}\sigma\int_{0}^{\infty}d_4(s) \rd B(s)=C, \quad \text{a.s.}
\]
which implies \eqref{eq.Xaveexpbubble} and also part (b), due to the
definitions of $c_4$ and $d_4$ in \eqref{eq.c4} and \eqref{eq:d4}
and of $C$ in part (b).

Moreover, it follows from \cite[Ch. 2.13.5, p.304-305]{Shir} that
\[
    \mathbb{E}[C]= \lim_{t\to\infty}\frac{\mathbb{E}\left[X(t)\right]}{\e^{at}t^{\ba}}=\lim_{t\to\infty}\frac{x(t)}{\e^{at}t^{\ba}}
    =c_4\Gamma(1+\ba)e^a a^{\ba},
\]
and that
\[
\Var[C] = \lim_{t\to\infty}\frac{\text{Var}[X(t)]}{\e^{2at}t^{2\ba}}
= \sigma^2 \Gamma(1+\ba)^2 e^{2a}
a^{2\ba}\int_{0}^{\infty}d_{4}^2(s) \rd s>0.
\]
These results and  \eqref{eq.c4} and \eqref{eq:d4} establish the
validity of parts (c) and (d).

All that remains to show is that \eqref{eq:r4domr3} is indeed true.
%
%
If $\bba<-1$, then $d_3\in L^2(0,\infty)$, and the stochastic
integral tends to a finite limit by the Martingale Convergence
Theorem. Since $r_3(t)/r_4(t)\to 0$ as $t\to\infty$, we obtain
\[
    \lim_{t\to\infty}\frac{r_{3}(t)}{r_{4}(t)}\int_{0}^{t}d_{3}(s) \rd B(s) =0, \quad \text{a.s.}
\]
If $\bba>-1$, then $d_3\not\in L^2(0,\infty)$. Indeed, the quadratic
variation of $\int_{0}^t d_{3}(s)dB(s)$ is given by
\[
    v(t) := \int_{0}^{t}d_{3}^2(s) \rd s \sim b^2a^{\bba}(\bba+1)^{-1} t^{\bba+1}, \quad\text{as $t\to\infty$},
\]
and hence $ \log\log v(t) \sim \log\log t$ as $t\to\infty$.
Therefore the stochastic integral  $\int_{0}^t d_{3}(s)dB(s)$ obeys
the Law of the Iterated Logarithm for continuous martingales (cf.,
e.g.,~\cite[Exercise V.1.15]{RevYor}), so
\[
    \limsup_{t\to\infty}\frac{\int_{0}^{t}d_{3}(s)dB(s)}{\sqrt{2v(t)\log\log v(t)}}
    = -\liminf_{t\to\infty}\frac{\int_{0}^{t}d_{3}(s)dB(s)}{\sqrt{2v(t)\log\log v(t)}} =1, \quad\text{a.s.}
\]
These asymptotic estimates for the stochastic integral and $v$,
together with \eqref{eq.r3asy} and \eqref{eq.r4asy} yield
\[
    \lim_{t\to\infty} \frac{r_{3}(t)}{r_4(t)}\int_{0}^{t}d_{3}(s)dB(s) =0, \quad \text{a.s.}
\]
as required. The above argument holds similarly for the case when
$\bba=-1$.


The case $b/a\in\{-1,-2,-3,...\}$ can be dealt with similarly. While
we only have the crude estimate \eqref{eq.d3tilasy} for the
asymptotic behaviour of $\tilde{d}_3$, it is nevertheless the case
that the quadratic variation of $\int_{0}^{t}\tilde{d}_3(s)dB(s)$
can grow no faster than a power of $t$ as $t\to\infty$ (or indeed
may converge as $t\to\infty$). Thus we obtain
\begin{equation*}
    \lim_{t\to\infty}\frac{r_{3}(t)}{\tilde{r}_{4}(t)}\int_{0}^{t}\tilde{d}_{3}(s) \rd B(s) =0, \quad \text{a.s.}
\end{equation*}
as before.


\subsection{Proof of Theorem \ref{cor:stochexp}}
Since $a<0$ and $a+b>0$, we have $b/a\not\in \{1,2,\ldots\}$.
Therefore, from \eqref{eq:detspec1}, \eqref{eq:resdecomp} and
\eqref{eq.xrep} one has,
\begin{equation}\label{eq:stochdecomptrans}
    X(t) = r_{1}(t)c_{1} + r_{2}(t)c_{2} + \sigma r_{1}(t)\int_{0}^{t}d_{1}(s)dB(s) + \sigma r_{2}(t)\int_{0}^{t}d_{2}(s)dB(s).
\end{equation}
We have already deduced the asymptotic behaviour of $r_1$, $r_2$,
$d_1$ and $d_2$ in \eqref{eq.r1asy}, \eqref{eq.r2asy},
\eqref{eq:d1asy} and \eqref{eq:d2asy}. We recapitulate their
limiting behaviour now:
\begin{align*} 
    r_{1}(t) \sim \e^{at} |a|^{\ba} t^{\ba}, &\quad r_{2}(t)\sim \frac{\e^{-a}}{\Gamma(-\ba)} |a|^{-1-\ba}t^{-1-\ba},
     \quad \text{ as $t\to\infty$}, \\
    d_{1}(s) \sim |a|^{-\ba} \e^{-as} s^{-\ba},  &\quad d_{2}(s) \sim \Gamma(-\ba) b \, \e^{a} |a|^{-1+\ba} s^{\ba}, \quad \text{ as $s\to\infty$}. 
\end{align*}
Dividing across \eqref{eq:stochdecomptrans} by $r_2(t)$ yields
\begin{equation} \label{eq:stochdecomp3}
    \frac{X(t)}{r_2(t)} = \frac{r_1(t)}{r_2(t)}c_1 + c_2 + \sigma \frac{r_1(t)}{r_2(t)}\int_0^t d_1(s)\,dB(s) + \sigma\int_0^t d_2(s)\,dB(s).
\end{equation}
The asymptotic behaviour of the first and last terms is readily
estimated. Since $a<0$, we have from \eqref{eq.r1asy} and
\eqref{eq.r2asy} that $r_1(t)/r_2(t)\to 0$ as $t\to\infty$. Also,
since $a<0$ and $a+b>0$, we have $2b/a<-2$. Hence $d_2\in
L^{2}(0,\infty)$ and therefore by the martingale convergence theorem
for continuous martingales (cf., e.g., \cite[Thm. V.1.8]{RevYor}) we
have
\begin{equation}\label{eq:coeffl2}
    \lim_{t\to\infty}\int_{0}^{t}d_{2}(s) dB(s) = \int_{0}^{\infty}d_{2}(s) dB(s),\quad  \text{a.s}.
\end{equation}
We now examine the asymptotic behaviour of the third term on the
righthand side of \eqref{eq:stochdecomp3}. Firstly observe that
$\int_{0}^{t}d_{1}(s) dB(s)$ is normally distributed with mean zero
and variance given by
\[
    v_{1}(t) = \int_{0}^{t}d_{1}^{2}(s) ds.
\]
By l'H\^{o}pital's rule we have
\[
    v_{1}(t) \sim \frac{1}{2}|a|^{-1-\bba}\e^{-2at} (1+t)^{-\bba}, \quad \log\log v_{1}(t) \sim \log t,\text{ as $t\to\infty$},
\]
and so we have by the Law of the Iterated Logarithm for continuous
martingales (cf., e.g.,~\cite[Exercise V.1.15]{RevYor}) that
\[
    \limsup_{t\to\infty}\frac{\int_{0}^{t}d_{1}(s)dB(s)}{\sqrt{2v_1(t)\log\log v_1(t)}}
    = -\liminf_{t\to\infty}\frac{\int_{0}^{t}d_{1}(s)dB(s)}{\sqrt{2v_1(t)\log\log v_1(t)}} =1, \quad\text{a.s.}
\]
Thus we have
\begin{equation}\label{eq:1ststochterm}
    \limsup_{t\to\infty} \sigma\frac{r_{1}(t)\int_{0}^{t}d_{1}(s)\rd B(s)}{\sqrt{\log t}}
    = -\liminf_{t\to\infty} \sigma\frac{r_{1}(t)\int_{0}^{t}d_{1}(s)\rd B(s)}{\sqrt{\log t}}
    = \frac{\sigma}{\sqrt{|a|}}.
\end{equation}
Using \eqref{eq:1ststochterm}, the fact that $\log t / r_{2}(t) \to
0$ as $t\to\infty$, together with \eqref{eq:coeffl2}, we arrive at
\begin{equation*} 
    \lim_{t\to\infty} \frac{X(t)}{r_{2}(t)} = c_{2} + \sigma\int_{0}^{\infty}d_{2}(s) \rd
    B(s), \quad\text{a.s.}
\end{equation*}
By \eqref{eq.r2asy} we therefore obtain
\begin{multline}\label{eq:polylim}
    \lim_{t\to\infty} \frac{X(t)}{t^{-1-\ba}}
    \\= \frac{\e^{-a}}{\Gamma(-\ba)} |a|^{-1-\ba}c_{2} + \sigma \frac{\e^{-a}}{\Gamma(-\ba)} |a|^{-1-\ba}\int_{0}^{\infty}d_{2}(s) \rd
    B(s)=C, \quad\text{a.s.}
\end{multline}
which implies part (a) and also part (b), due to the definitions of
$c_2$ and $d_2$ in \eqref{eq.c2} and \eqref{eq:d2} and of $C$ in
part (b).

Moreover, it follows from \cite[Ch. 2.13.5, p.304-305]{Shir} that
\[
    \mathbb{E}[C]= \lim_{t\to\infty}\frac{\mathbb{E}\left[X(t)\right]}{t^{-1-\ba}}=\lim_{t\to\infty}\frac{x(t)}{t^{-1-\ba}}
    =c_2 \frac{\e^{-a}}{\Gamma(-\ba)} |a|^{-1-\ba},
\]
and that
\[
\Var[C] = \lim_{t\to\infty}\frac{\text{Var}[X(t)]}{t^{-2-2\ba}} =
\sigma^2 \frac{\e^{-2a}}{\Gamma^2(-\ba)}
|a|^{-2-2\ba}\int_{0}^{\infty}d_{2}^2(s)\,ds>0
\]
These results and  \eqref{eq.c2} and \eqref{eq:d2} establish the
validity of parts (c) and (d).
%
%

\subsection{
Proof of Theorem \ref{thm:modBes}}
From \eqref{eq:detspec5}, \eqref{eq:resdecomp6} 
 and \eqref{eq.xrep}, we can write $X$
according to 
\begin{equation}\label{eq:stochdecomp4}
    X(t) = r_{5}(t)c_{5} + r_{6}(t)c_{6} + \sigma r_{5}(t)\int_{0}^{t}d_{5}(s)\,dB(s) + \sigma r_{6}(t)\int_{0}^{t}d_{6}(s)\,dB(s).
\end{equation}
We can deduce the asymptotic behaviour of $r_5$, $r_6$, $d_5$ and
$d_6$ using \eqref{eq:modBesasy} and \eqref{eq:r5r6}.  Hence
\begin{gather*}
    r_5(t) \sim \frac{1}{2 b^{1/4}\sqrt{\pi}}\e^{2\sqrt{bt}} t^{-1/4}, \text{ as $t\to\infty$}, \quad 
    r_6(t) \sim \frac{\sqrt{\pi}}{2 b^{1/4}} \e^{-2\sqrt{bt}} t^{-1/4}, \text{ as $t\to\infty$}, \\
    d_5(s)
    \sim \sqrt{\pi} b^{1/4} s^{1/4}\e^{-2\sqrt{bs}}, 
    \text{ as $s\to\infty$}, \quad
    d_6(s)
    \sim \frac{1}{\sqrt{\pi}} b^{1/4} s^{1/4}\e^{2\sqrt{bs}}, \text{ as $s\to\infty$}.
\end{gather*}
Dividing across \eqref{eq:stochdecomp4} by $r_5(t)$ yields
\begin{equation} \label{eq:stochdecomp5}
    \frac{X(t)}{r_5(t)} = c_5 + \frac{r_6(t)}{r_5(t)}c_6 + \sigma\int_0^t d_5(s)\,dB(s) + \sigma \frac{r_6(t)}{r_5(t)}\int_0^t d_6(s)\,dB(s).
\end{equation}
The asymptotic behaviour of the second and third terms is readily
estimated. First as $b>0$, $r_6(t)/r_5(t)\to 0$ as $t\to\infty$.
Also, $d_5\in L^{2}(0,\infty)$ and therefore by the martingale
convergence theorem for continuous martingales (cf., e.g.,
\cite[Thm. V.1.8]{RevYor}) we have
\begin{equation}\label{eq:coeffd5l2}
    \lim_{t\to\infty}\int_{0}^{t}d_{5}(s) dB(s) = \int_{0}^{\infty}d_{5}(s) dB(s),\quad  \text{a.s}.
\end{equation}
We now examine the asymptotic behaviour of the fourth term on the
righthand side of \eqref{eq:stochdecomp5}. Firstly observe that
$\int_{0}^{t}d_{6}(s) dB(s)$ is normally distributed with mean zero
and variance given by
\[
    v_3(t) := \int_{0}^{t}d_6(s)^2 \,ds.
\]
By using l'H\^opital's rule, the asymptotic behaviour of $v_3(t)$ as
$t\to\infty$ can be found:
\[
    \lim_{t\to\infty}\frac{v_3(t)}{t\e^{4\sqrt{bt}}} = \frac{1}{2\pi}, 
\quad
    \lim_{t\to\infty}\frac{\log \log v_3(t)}{\log t}=\frac{1}{2}.
\]
Thus by the Law of the Iterated Logarithm for continuous martingales
(cf., e.g.,~\cite[Exercise V.1.15]{RevYor}) we have that
\[
    \limsup_{t\to\infty}\frac{\int_{0}^{t}d_{6}(s)dB(s)}{\sqrt{2v_3(t)\log\log v_3(t)}}
    = -\liminf_{t\to\infty}\frac{\int_{0}^{t}d_{6}(s)dB(s)}{\sqrt{2v_3(t)\log\log v_3(t)}} =1, \quad\text{a.s.}
\]
Thus we have
\[
\int_{0}^{t}d_6(s)\,dB(s) = O\left(t^{1/2} \e^{2\sqrt{bt}} \sqrt{\log
t}\right), \quad \text{ as $t\to\infty$}.
\]
Therefore
\begin{equation*}
\frac{r_6(t)}{r_5(t)}\int_{0}^{t}d_6(s)\,dB(s) =O\left(t^{1/2}
\e^{-2\sqrt{bt}} \sqrt{\log t}\right), \quad \text{ as
$t\to\infty$},
\end{equation*}
and so
\begin{equation}\label{eq:1ststochterma0}
    \lim_{t\to\infty}\frac{r_6(t)}{r_5(t)} \int_{0}^{t}d_6(s)\,dB(s) =0 \quad \text{a.s.}
\end{equation}
Taking the limit as $t\to\infty$ in \eqref{eq:stochdecomp5} and
using \eqref{eq:1ststochterma0} together with \eqref{eq:coeffd5l2},
we arrive at
\begin{equation*} 
    \lim_{t\to\infty} \frac{X(t)}{r_5(t)} = c_5 + \sigma\int_{0}^{\infty}d_5(s) \,dB(s), \quad\text{a.s.}
\end{equation*}
Using the asymptotic behaviour of $r_5$ we therefore obtain
\begin{multline}\label{eq:exppolylim}
    \lim_{t\to\infty} \frac{X(t)}{\e^{2\sqrt{bt}} t^{-1/4}}
    =\lim_{t\to\infty}\frac{X(t)}{r_{5}(t)}\cdot \frac{r_5(t)}{\e^{2\sqrt{bt}} t^{-1/4}}
    \\=
      \frac{1}{2 b^{1/4}\sqrt{\pi}} \left( c_5 + \sigma\int_{0}^{\infty}d_5(s)\,dB(s)\right)
    =C, \quad\text{a.s.}
\end{multline}
which implies part (a) and also part (b), due to the definitions of
$c_5$ and $d_5$ in \eqref{eq:cntsc5} and \eqref{eq:cntsd5} and of
$C$ in part (b).

Moreover, it follows from \cite[Ch. 2.13.5, p.304-305]{Shir} that
\[
    \mathbb{E}[C]= \lim_{t\to\infty}\frac{\mathbb{E}\left[X(t)\right]}{\e^{2\sqrt{bt}} t^{-1/4}}
    =\lim_{t\to\infty}\frac{x(t)}{\e^{2\sqrt{bt}} t^{-1/4}}
    = \frac{1}{2 b^{1/4}\sqrt{\pi}}  c_5,
\]
and that
\[
\Var[C] = \lim_{t\to\infty}\frac{\text{Var}[X(t)]}{\e^{4\sqrt{bt}}
t^{-1/2}} =\frac{1}{4 b^{1/2} \pi}
\sigma^2\int_{0}^{\infty}d_5^2(s)\,ds>0.
\]
These results and \eqref{eq:cntsc5} and \eqref{eq:cntsd5} establish
the validity of parts (c) and (d).

\section{Proof of Theorem~\ref{thm.averagegaussian} and Theorem~\ref{thm:XUrate}} \label{sect:recproofs}
We note that similar asymptotic analysis as that above would give
us, for $a+b\leq0$,
\[
    \limsup_{t\to\infty}\frac{X(t)}{\sqrt{2\log t}} = \frac{\sigma}{\sqrt{2|a|}}.
\]
We choose however to prove this result via Theorem
\ref{thm.averagegaussian}, as it provides an interesting result
regarding the asymptotic behaviour of the process.

\subsection{A preliminary lemma}
\begin{lemma}\label{lm:Lvar}
    Let $a<0$ and $a+b=0$. Define $H$ by
    \begin{equation} \label{eq:Haplusb0}
        H(t,u)=\int_{u}^{t}d_2(s)\frac{b}{1+s}\e^{-au}\int_{u}^{s}\sigma\e^{aw} \,dw\,ds, \quad 0\leq u\leq t,
    \end{equation}
    where $d_2$ is as given by \eqref{eq:d2}. Define $H_\infty$
\begin{equation} \label{eq:Hinftyaplusb0}
    H_\infty(u) = \frac{\sigma}{|a|}\int_{u}^{\infty}\frac{d_2(s)}{1+s} ds
    - \frac{\sigma}{|a|}\e^{-au}\int_{u}^{\infty}\e^{as}\frac{d_2(s)}{1+s} ds, \quad u\geq 0.
\end{equation}
    Then
    \[
        \lim_{t\to\infty}\int_{0}^{t}H(t,u)dB(u) =
        \int_0^\infty H_\infty(u)\,dB(u), \quad\text{a.s.}
    \]
\end{lemma}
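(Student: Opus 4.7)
The plan is to write
\begin{equation*}
\int_0^t H(t,u)\,dB(u) - \int_0^\infty H_\infty(u)\,dB(u)
= \int_0^t [H(t,u)-H_\infty(u)]\,dB(u) - \int_t^\infty H_\infty(u)\,dB(u),
\end{equation*}
and to show that each term on the right tends to zero almost surely as $t\to\infty$. The crucial step is a direct computation of the inner integral in \eqref{eq:Haplusb0} (using $b=|a|$, which holds since $a+b=0$) that, after comparison with \eqref{eq:Hinftyaplusb0}, yields the identity
\begin{equation*}
H_\infty(u) - H(t,u) = \alpha_t - \e^{-au}\beta_t,\qquad 0\le u\le t,
\end{equation*}
where
\begin{equation*}
\alpha_t := \frac{\sigma}{|a|}\int_t^\infty \frac{d_2(s)}{1+s}\,ds,\qquad
\beta_t := \frac{\sigma}{|a|}\int_t^\infty \frac{\e^{as}d_2(s)}{1+s}\,ds.
\end{equation*}
The key feature of this identity is that the $u$-dependence of the defect decouples into a constant and a multiple of $\e^{-au}$, so integration against $dB(u)$ reduces the first stochastic integral on the right to the clean expression
\begin{equation*}
\int_0^t [H(t,u)-H_\infty(u)]\,dB(u) = -\alpha_t B(t) + \beta_t \int_0^t \e^{-au}\,dB(u).
\end{equation*}

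To control the tail term $\int_t^\infty H_\infty(u)\,dB(u)$, I would exploit the asymptotic \eqref{eq:d2asy} which, together with $\ba=-1$, gives $d_2(s)\sim \e^a|a|^{-1}s^{-1}$ as $s\to\infty$. Integrating, this yields $H_\infty(u)=O(u^{-1})$ as $u\to\infty$, so $H_\infty\in L^2(0,\infty)$. The continuous martingale $t\mapsto\int_0^t H_\infty(u)\,dB(u)$ is then $L^2$-bounded and converges almost surely by the martingale convergence theorem \cite[Thm.~V.1.8]{RevYor}; its tail $\int_t^\infty H_\infty(u)\,dB(u)$ therefore tends to zero almost surely.

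For the remaining two terms, the same asymptotic for $d_2$ gives $\alpha_t=O(t^{-1})$, while an integration by parts on $\int_t^\infty s^{-2}\e^{as}\,ds$ gives $\beta_t=O(t^{-2}\e^{at})$. The classical Law of the Iterated Logarithm yields $B(t)=O(\sqrt{t\log\log t})$ a.s., and hence $\alpha_t B(t)\to 0$ a.s. The martingale $V(t):=\int_0^t \e^{-au}\,dB(u)$ has quadratic variation $(\e^{-2at}-1)/(2|a|)$, so the Law of the Iterated Logarithm for continuous martingales \cite[Exercise~V.1.15]{RevYor} gives $V(t)=O(\e^{-at}\sqrt{\log t})$ a.s., and consequently $\beta_t V(t)=O(t^{-2}\sqrt{\log t})\to 0$ a.s., completing the argument.

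The main obstacle is that $H(t,u)$ depends on $t$ both through the upper limit of the defining integral and through the functional form of the integrand on $[0,t]$, so neither a straightforward stochastic Fubini nor a single $L^2$ estimate applies directly. The explicit decomposition $H_\infty(u)-H(t,u)=\alpha_t-\e^{-au}\beta_t$ is what removes this difficulty, replacing the two-parameter integrand with two elementary processes, $B$ and $V$, whose almost sure behaviour is classical. The remaining work is then a careful matching of the polynomial decay rates of $\alpha_t$ and $\beta_t$ (dictated by the asymptotics of the Tricomi function $U$ encoded in $d_2$) against the almost sure growth rates of $B$ and $V$.
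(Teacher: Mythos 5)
Your proof is correct, but it takes a genuinely different route from the paper. The paper proves this lemma by invoking an external admissibility theorem (Theorem~7 of \cite{appdanlimop}): after simplifying $H$ to the same two-term form you use, it verifies that theorem's hypotheses, namely that $H_\infty\in L^2(0,\infty)$, that $\log t\cdot\int_0^t(H(t,u)-H_\infty(u))^2\,du\to 0$ (via Cauchy--Schwarz and the tail asymptotics of $d_2$), that $\int_0^t(\partial H/\partial t)^2(t,u)\,du$ grows at most polynomially, and that $H(t,t)=0$. You instead exploit the fact that the defect $H_\infty(u)-H(t,u)=\alpha_t-\e^{-au}\beta_t$ is a deterministic constant plus a deterministic multiple of $\e^{-au}$ in the integration variable, so that $\int_0^t H(t,u)\,dB(u)=\int_0^tH_\infty(u)\,dB(u)-\alpha_tB(t)+\beta_t\int_0^t\e^{-au}\,dB(u)$ exactly; everything then reduces to the martingale convergence theorem for the $L^2$ kernel $H_\infty$, the classical LIL for $B$, and the LIL for the exponential martingale, matched against the decay rates $\alpha_t=O(t^{-1})$ and $\beta_t=O(t^{-2}\e^{at})$ coming from \eqref{eq:d2asy} with $b/a=-1$. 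All of these steps check out. What your approach buys is a self-contained, elementary proof that also hands you a continuous-in-$t$ version of $t\mapsto\int_0^tH(t,u)\,dB(u)$ for free (the right-hand side of your identity is manifestly continuous); what the paper's approach buys is brevity and a template that works for kernels without this separable structure. The only cosmetic discrepancy is a constant factor of $1/|a|$ between the raw simplification of \eqref{eq:Haplusb0} and the form of \eqref{eq:Hinftyaplusb0}, but that inconsistency is already present in the paper's own statement and does not affect your argument.
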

\begin{proof}
The proof of this almost sure convergence result is an application of Theorem 7 in~\cite{appdanlimop}. 
$H$ simplifies to
\[
    H(t,u) = \frac{\sigma}{|a|}\int_{u}^{t}\frac{d_2(s)}{1+s}ds - \frac{\sigma}{|a|}\e^{-au}\int_{u}^{t}\e^{as}\frac{d_2(s)}{1+s} ds.
\]
$H_\infty$ given by \eqref{eq:Hinftyaplusb0} is well--defined by
virtue of \eqref{eq:d2asy}. To estimate the rate of decay of
$H_\infty$ to  zero, we use \eqref{eq:d2asy} to get
\begin{subequations}\label{eq:asyfg}
\begin{align}
    \int_{u}^{\infty}\frac{d_2(s)}{1+s}ds &\sim |a|^{-1}\e^{a}u^{-1}, \quad \quad \text{ as $u\to\infty$},  \\
    \e^{-au}\int_{u}^{\infty}\e^{as}\frac{d_2(s)}{1+s} ds &\sim |a|^{-2}\e^{a}u^{-2}, \quad \text{ as $u\to\infty$}.
\end{align}
\end{subequations}
Thus $H_\infty(u)\sim\sigma|a|^{-2}\e^{a}u^{-1}$ as $u\to\infty$ and
so $H_\infty\in L^2(0,\infty)$.

We now wish to show that
\begin{equation}\label{eq:HmHi}
    \lim_{t\to\infty}\int_{0}^{t}\left(H(t,u)-H_\infty(u) \right)^2 du\cdot\log t =0.
\end{equation}
Define
\[
    f(t):= \frac{\sigma}{|a|}\int_{t}^{\infty}\frac{d_2(s)}{1+s}ds, \quad g(t):=\frac{\sigma}{|a|}\int_{t}^{\infty}\e^{as}\frac{d_2(s)}{1+s} ds.
\]
Then the Cauchy-Schwarz inequality gives
\begin{align*}
    \int_{0}^{t}\left(H(t,u)-H_\infty(u) \right)^2 du &= \int_{0}^{t}\left(-f(t)+\e^{-au} g(t) \right)^2 du \\
    &\leq \int_{0}^{t}2 f(t)^2 du + \int_{0}^{t} 2\e^{-2au} g(t)^2 du \\
    &= 2t f(t)^2 + \frac{1}{2|a|}2 g(t)^2 \left(\e^{-2at} - 1 \right).
\end{align*}
The asymptotic relations \eqref{eq:asyfg} determine completely the
asymptotic behaviour of $f$ and $g$, and this, together with the
last inequality, gives \eqref{eq:HmHi}.

We show now that there exist $q\geq0$ and $c_q>0$ such that
\begin{equation}\label{eq:delH}
    \int_{0}^{t}\biggl[\frac{\partial}{\partial t} H(t,u) \biggr]^2 du \leq
    c_q(1+t)^{2q}, \quad t\geq 0.
\end{equation}
To do this we estimate according to
\begin{align*}
    \int_{0}^{t}\left[\frac{\partial}{\partial t} H(t,u) \right]^2 du
    &= \frac{\sigma^2}{|a|^2} \frac{d_2^2(t)}{(1+t)^2} \int_{0}^{t}  \left(1 - \e^{-au}\e^{at}\right)^2
    du,
\end{align*}
and using \eqref{eq:d2asy}, we see that $H$ obeys \eqref{eq:delH}
for any $q\geq0$ and $c_q>0$.
Also as $H(t,t)=0$ for all $t\geq0$ then all of the conditions of~\cite[Theorem 7]{appdanlimop}  
are satisfied and so we conclude $\lim_{t\to\infty}\int_0^t
H(t,u)dB(u)=\int_{0}^{\infty}H_{\infty}(u)dB(u)$ a.s. as required.
\end{proof}

\subsection{Proof of Theorem~\ref{thm.averagegaussian}}
We start by defining a process $Y=\{Y(t):t\geq -1\}$, which is
related to $U$ defined by \eqref{eq.OU}. It will be used in proving
Theorems~\ref{thm.averagegaussian} and \ref{cor:stochexp}. $Y$ is
defined by $Y(t)=\psi(t)$ for $t\in[-1,0]$ and it obeys
\begin{equation}\label{eq:Y}
    dY(t)=aY(t)\,dt+\sigma\,dB(t), \quad t\geq 0.
\end{equation}
Note that \eqref{eq.ldXave} is an immediate consequence of
\eqref{eq.ouaveasyequal1} or \eqref{eq.ouaveasyequal2} and the fact
that
\begin{equation} \label{eq.oustaldev}
\limsup_{t\to\infty} \frac{U(t)}{\sqrt{2\log
t}}=\frac{\sigma}{\sqrt{2|a|}}, \quad \liminf_{t\to\infty}
\frac{U(t)}{\sqrt{2\log t}}=-\frac{\sigma}{\sqrt{2|a|}},
\quad\text{a.s.}
\end{equation}
Therefore it remains to prove \eqref{eq.ouaveasyequal1} and
\eqref{eq.ouaveasyequal2}. Firstly extend $U$ to $[-1,0)$ by
$U(t)=0$ for $t\in[-1,0)$. Then for $Y$ defined by \eqref{eq:Y},
for $t\geq 0$ we have $Y(t)-U(t)=\psi(0)e^{at}$. Therefore
$U(t)-Y(t)\to 0$ as $t\to\infty$, a.s. Hence it remains to prove
that $X(t)-Y(t)\to 0$ as $t\to\infty$ a.s. in order to establish
\eqref{eq.ouaveasyequal1} and \eqref{eq.ouaveasyequal2}.

Define $Z(t)=X(t)-Y(t)$ for $t\geq -1$. Then $Z(t)=0$ for
$t\in[-1,0]$ and
\begin{equation} \label{eq.diffave}
Z'(t)=aZ(t)+b\frac{1}{1+t}\int_0^t Z(s)\,ds + f(t), \quad t>0,
\end{equation}
where
\begin{equation} \label{def.f}
f(t):=b\frac{1}{t+1}\int_{-1}^0 \psi(s)\,ds+  b\frac{1}{t+1}\int_0^t
Y(s)\,ds, \quad t\geq 0.
\end{equation}
Next we show that $f(t)\to 0$ as $t\to\infty$, a.s. This clearly
follows if $\int_0^t Y(s)\,ds/t\to 0$ as $t\to\infty$ a.s. To prove
this,  note that
\begin{equation} \label{eq.ouint}
Y(t)=\psi(0)+a\int_0^t Y(s)\,ds +\sigma B(t), \quad t\geq 0.
\end{equation}
Since $U$ obeys \eqref{eq.oustaldev}, $Y(t)-U(t)\to0$ as
$t\to\infty$, $Y$ obeys
\[
\limsup_{t\to\infty} \frac{|Y(t)|}{\sqrt{2\log
t}}=\frac{\sigma}{\sqrt{2|a|}}\quad\text{a.s.}
\]
Therefore by this limit and the strong law of large numbers for
standard Brownian motion \cite[2.9.3]{K&S}, we get from
\eqref{eq.ouint} that $\int_0^t Y(s)\,ds/t\to 0$ as $t\to\infty$
a.s., and therefore that $f(t)\to0$ as $t\to\infty$ a.s. Indeed, by
using the Law of the iterated logarithm for standard Brownian motion
\cite{K&S}, for every $\epsilon>0$, we have
\begin{equation} \label{eq:orderf}
    \limsup_{t\to\infty} \frac{f(t)}{t^{-1/2}\sqrt{2\log\log t}}
    = -\liminf_{t\to\infty} \frac{f(t)}{t^{-1/2}\sqrt{2\log\log t}} = \frac{|b|\sigma}{|a|}, \quad\text{a.s.}
\end{equation}
Recalling that the resolvent $r$ obeys \eqref{eq.resolv}, by
applying the conventional variation of constants formula to
\eqref{eq.diffave}, and using \eqref{eq:resdecomp} in the case that
$b/a\not\in \{1,2,\ldots\}$, we get
\begin{equation}  \label{eq:Zr1r2}
    Z(t) = \int_{0}^{t}r(t,s)f(s) \rd s = r_{1}(t)\int_{0}^{t}d_{1}(s)f(s)ds + r_{2}(t)\int_{0}^{t}d_{2}(s)f(s)ds
\end{equation}
and hence
\begin{equation}\label{eq:modZ}
    |Z(t)| \leq |r_{1}(t)|\int_{0}^{t}|d_{1}(s)||f(s)|ds + |r_{2}(t)|\int_{0}^{t}|d_{2}(s)||f(s)|\,ds.
\end{equation}
The first integral on the righthand side of \eqref{eq:modZ}
converges to zero using \eqref{eq.r1asy}, \eqref{eq:d1asy} and
\eqref{eq:orderf}, on application on l'H\^{o}pital's rule.

It transpires that the limiting behaviour as $t\to\infty$ of the
second integral on the righthand side of \eqref{eq:modZ} differs
according to whether $a+b<0$ or $a+b=0$. We consider first the case
when $a+b<0$. Using \eqref{eq:d2asy} and \eqref{eq:orderf} in the
case that $2b+a>0$, there exists an a.s. finite positive random
variable $M$ such that
\[
    \limsup_{t_\to\infty}\int_{0}^{t}|d_{2}(s)||f(s)|ds
    \leq \limsup_{t\to\infty}M\int_{0}^{\infty} (1+s)^{\ba-1/2}\sqrt{\log\log (e+s)} \,ds<\infty.
\]
Hence
\begin{equation}\label{eq:secint}
    \lim_{t\to\infty}\int_{0}^{t}d_2(s)f(s)\,ds = \int_{0}^{\infty}d_2(s)f(s)ds\in (-\infty,\infty) \quad\text{a.s.}
\end{equation}
Since $r_2$ obeys \eqref{eq.r2asy}, we have
\begin{equation} \label{eq.secintto0}
    \lim_{t\to\infty}|r_{2}(t)|\int_{0}^{t}|d_{2}(s)||f(s)|\,ds =0, \quad\text{a.s.}
\end{equation}
In the case when $2b+a\leq0$, we notice from \eqref{eq:orderf} that
for any $\epsilon<1/2$ that $f(t)/t^{-1/2+\epsilon}\to 0$ as
$t\to\infty$ on the a.s. event $\Omega_1$, say. Therefore, by the
continuity of $f$ and this relation, there is an a.s. finite and
positive random variable $K_\epsilon$ such that $|f(t,\omega)|\leq
K_\epsilon(\omega)(1+t)^{-1/2+\epsilon}$ for all $t\geq 0$.
Therefore, by virtue of the continuity of $r_2$, $d_2$ and
\eqref{eq.r2asy} and \eqref{eq:d2asy}, there exists an a.s. finite
and positive random variable $M_\epsilon$ such that, for all $t\geq
0$, we have
\begin{align*}
    |r_{2}(t)|\int_{0}^{t}|d_{2}(s)||f(s,\omega)|ds &\leq M_\epsilon(\omega) (1+t)^{-1-\ba} \int_0^t (1+s)^{\ba-1/2+\epsilon}\,ds\\
    &\leq M_\epsilon(\omega) (1+t)^{-1-\ba} (1+t)^{\ba+1/2+\epsilon}\frac{1}{b/a+1/2+\epsilon},
\end{align*}
for each $\omega \in \Omega_1$, with the last inequality holding
because $b/a-1/2+\epsilon>-1$. Since $\Omega_1$ is an a.s. event, we again have \eqref{eq.secintto0} and so, using this limit and
\eqref{eq:modZ}, we see that $Z(t)\to 0$ as $t\to\infty$ a.s. in the
case that $b/a\not\in\{1,2,...\}$. We can demonstrate that $Z(t)\to
0$ as $t\to\infty$ a.s. in a similar manner when $b/a\in\{1,2,...\}$
by using the asymptotic behaviour of $r_1$, $\tilde{r}_2$,
$\tilde{d}_1$ and $\tilde{d}_2$. Hence the proof of part (i) is
complete.

For the the proof of part (ii), we consider the case $a+b=0$. Recall
that $Y$ can be written in the form
\[
Y(t) = \psi(0)\e^{at} + \sigma e^{at}\int_{0}^{t}\e^{-as}dB(s),
\quad t\geq0.
\]
In this case, we wish to show that $Z$ tends to a non--trivial
limit. Arguing as above, we have that the first integral on the
right hand side of \eqref{eq:Zr1r2} tends to zero as $t\to\infty$
a.s. As to the second term on the right hand side of
\eqref{eq:Zr1r2}, by using a stochastic Fubini theorem, it is seen
that
\begin{multline*}
\int_{0}^{t}d_2(s)f(s)\,ds =\int_{0}^{t}\frac{b}{1+s}d_2(s)\,ds
\int_{-1}^{0}\psi(u)\,du
\\+ \int_{0}^{t} d_2(s)\frac{b}{1+s}\int_{0}^{s}\psi(0)\e^{au}du\,ds
+ \int_0^t H(t,u)\,dB(u),
\end{multline*}
where $H$ is given by \eqref{eq:Haplusb0}. The two Riemann integrals
on the right--hand side of the above equation converge to finite
limits as $t\to\infty$. Moreover as \eqref{eq:secint} holds
therefore the stochastic integral on the right--hand side above
converges almost surely. Recalling from \eqref{eq.r2asy} that
$\lim_{t\to\infty} r_2(t)=e^{-a}|a|^{-b/a-1}$ in the case when
$a+b=0$, and by applying Lemma~\ref{lm:Lvar}, we have that
\begin{multline*}
\lim_{t\to\infty} Z(t)= \frac{e^{-a}}{|a|^{b/a+1}}
\int_{0}^{\infty}\frac{b}{1+s}d_2(s)\,ds \int_{-1}^{0}\psi(u)\,du
\\+ \frac{e^{-a}}{|a|^{b/a+1}}\int_{0}^{\infty} d_2(s)\frac{b}{1+s}\int_{0}^{s}\psi(0)\e^{au}du\,ds
+ \frac{e^{-a}}{|a|^{b/a+1}}\int_0^\infty H_\infty(u)\,dB(u),
\quad\text{a.s.},
\end{multline*}
where $H_\infty$ is given by \eqref{eq:Hinftyaplusb0}. We call the
limit on the righthand side $L$. Therefore $X(t)-U(t)\to L$ as
$t\to\infty$ a.s. Clearly $L$ is an
$\mathcal{F}^B(\infty)$--measurable normal random variable. In order
to see that $L$ is nontrivial,
we may use It\^o's isometry to show that its mean and variance are
given by the formulae in the statement of part (ii) of the theorem. Since 
\[
\lim_{t\to\infty} \frac{1}{1+t}\int_0^t U(s)\,ds = 0, \quad \text{a.s.}
\]
The proofs of \eqref{eq.ave0} and \eqref{eq.aveL} are simple consequences of the fact that $X(t)-U(t)$ tends to the finite limits $0$ and $L$ as $t\to\infty$
a.s. in case (i) and (ii) respectively.

\subsection{Proof of Theorem~\ref{thm:XUrate}}
    Let $Y$ and $Z$ be as defined in the proof of Theorem~\ref{thm.averagegaussian}.
    To attain a bound on the rate of $X-U$ tending to zero, the integral terms in \eqref{eq:modZ} need to be analysed more carefully.
From \eqref{eq:orderf}, and by using the continuity of $f$, it
follows for every $\omega$ in an almost sure event $\Omega_1$ that
there exists an a.s. finite and positive random variable
$K=K(\omega)>0$ such that
such that
\[
    |f(t,\omega)| \leq K(\omega)(1+t)^{-1/2}\sqrt{\log\log (t+e)}, \quad t\geq 0.
\]
    For the first integral in \eqref{eq:modZ}, we start by  using l'H\^opital's rule to show that
    \[
        \lim_{t\to\infty}\frac{\int_{0}^{t}\e^{-as}(1+s)^{-\ba-1/2}\sqrt{\log\log(e+s)}\,ds}
        {\e^{-at}(1+t)^{-\ba-1/2}\sqrt{\log\log(e+t)}} \in(0,\infty).
    \]
    Therefore, there is $K_3>0$ such that
    \[
    \int_{0}^{t}\e^{-as}(1+s)^{-\ba-1/2}\sqrt{\log\log(e+s)}\,ds\leq K_3
        \e^{-at}(1+t)^{-\ba-1/2}\sqrt{\log\log(e+t)},
    \]
   for all $t\geq 0$. Now, by using 
    \eqref{eq.r1asy} 
    and \eqref{eq:d1asy} 
    and the continuity of $r_1$ and $d_1$, we have
    that there exist $K_1>0$ and $K_2>0$ such that
    \begin{equation*} 
    |r_{1}(t)| \leq K_1 \e^{at} (1+ t)^{\ba}, \quad t\geq 0;  \quad
      |d_{1}(s)| \leq K_2 \e^{-as} (1+s)^{-\ba},  \quad s\geq 0.
\end{equation*}
    Therefore for all $\omega\in \Omega_1$ and $t\geq 0$ we have
    \begin{align*}
    \lefteqn{|r_{1}(t)|\int_{0}^{t}|d_{1}(s)||f(s,\omega)|ds }\\
    &\leq K_4(\omega)  (1+t)^{-1/2} \sqrt{\log\log (t+e)},
    \end{align*}
    where $K_4(\omega)=K_1 K_2K(\omega) K_3$.
    Hence
    \begin{equation}\label{eq:Z1}
        \limsup_{t\to\infty} \frac{|r_{1}(t)|\int_{0}^{t}|d_{1}(s)||f(s)|ds}{(1+t)^{-1/2}\sqrt{\log\log(1+t)}}\in[0,\infty), \quad \text{a.s.}
    \end{equation}

    For the second integral in \eqref{eq:modZ}, we showed in the proof of Theorem~\ref{thm.averagegaussian} that $\limsup_{t\to\infty}\int_{0}^{t}|d_2(s)f(s)|\,ds<+\infty$ a.s. in the case when $2b+a>0$. Hence
    \begin{equation}\label{eq:Z2}
        \limsup_{t\to\infty}\frac{|r_{2}(t)|\int_{0}^{t}|d_{2}(s)||f(s)|ds}{(1+t)^{-1-\ba}}\in[0,\infty).
    \end{equation}
    Moreover in this parameter regime $-1/2<-1-b/a<0$, and so comparing the decay rates in \eqref{eq:Z1} and \eqref{eq:Z2} gives (i).

    When $2b+a<0$, we may use l'H\^opital's rule to get
    \[    \lim_{t\to\infty}\frac{\int_{0}^{t}(1+s)^{\ba-1/2}\sqrt{\log\log(e+s)}\,ds}{(1+t)^{\ba+1/2}\sqrt{\log\log(e+t)}}\in(0,\infty).
    \]
    Hence there exists $K_7>0$ such that
    \[
  \int_{0}^{t}(1+s)^{\ba-1/2}\sqrt{\log\log(e+s)}\,ds\leq K_7(1+t)^{\ba+1/2}\sqrt{\log\log(e+t)}, \quad t\geq 0.
    \]
    Since $r_2$ and $d_2$ obey \eqref{eq.r2asy} and \eqref{eq:d2asy}, we have that there exist $K_5>0$ and $K_6>0$
    such that
     \begin{equation*} 
    |r_{2}(t)|\leq K_5 (1+t)^{-1-\ba}, \quad t\geq 0; \quad
      |d_{2}(s)| \leq K_6 (1+s)^{\ba},\quad s\geq 0.
\end{equation*}
    Therefore for all $\omega\in \Omega_1$ and $t\geq 0$ we have
    \begin{align*}
    \lefteqn{|r_{2}(t)|\int_{0}^{t}|d_{2}(s)||f(s,\omega)|\,ds}\\
    &\leq   K_5 K_6 K(\omega) K_7 (1+t)^{-1/2} \sqrt{\log\log(e+t)},
    \end{align*}
    and so
    \begin{equation}\label{eq:Z3}
        \limsup_{t\to\infty} \frac{|r_{2}(t)|\int_{0}^{t}|d_{2}(s)||f(s)|ds}{(1+t)^{-1/2}\sqrt{\log\log(1+t)}}\in[0,\infty).
    \end{equation}
    Applying \eqref{eq:Z2} and \eqref{eq:Z3} in \eqref{eq:modZ} proves (ii).

In the case $2b + a = 0$, we have the estimate
\[
\lim_{t\to\infty} \frac{\int_0^t (1 + s)^{-1}\sqrt{\log \log(e + s)} \,ds}{\log t \sqrt{\log \log t}}
= 1.
\]
Now following the same procedure as for the proof of part (ii) gives the result.

\section{Proof of Theorem \ref{thm:Bes}} \label{sect:prooflilbessel}
We begin this section with the statement and proof of some
preparatory lemmata. Firstly, we give a discrete version of the Law
of the Iterated Logarithm. The following result is stated as
Theorem~2 of \cite{tomkins:1971} or Exercise~3 in \cite[pp383,
Section~10.2]{chowt}
\begin{lemma}\label{lm:norlil}
    Let $\{X_n\}_{n\in\N}$ be a sequence of independent Gaussian random variables where $X_n$ has mean zero and
    variance $\sigma_n^2$. If $s^2_n=\sum_{i=1}^{n}\sigma^2_i \to\infty$ as $n\to\infty$ and $\sigma_n=o(s_n)$
    as $n\to\infty$, then
    \[
        \limsup_{n\to\infty}\frac{\sum_{j=1}^n X_j}{\sqrt{2s^2_n\log\log s^2_n}}
        = -\liminf_{n\to\infty}\frac{\sum_{j=1}^n X_j}{\sqrt{2s^2_n\log\log s^2_n}}=1, \quad \text{a.s.}
    \]
\end{lemma}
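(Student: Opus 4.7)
The plan is to follow the classical Kolmogorov--Hartman--Wintner route to the LIL, exploiting the key simplification that $S_n := \sum_{j=1}^n X_j$ is itself Gaussian, $S_n \sim N(0, s_n^2)$. The hypothesis $\sigma_n = o(s_n)$ plays the standard uniform asymptotic negligibility role: given any $\theta > 1$, setting $n_k := \min\{n \in \N : s_n^2 \geq \theta^k\}$, one has $s_{n_k}^2 - \theta^k \leq \sigma_{n_k}^2 = o(s_{n_k}^2)$, and hence $s_{n_k}^2 / \theta^k \to 1$. This renders the sequence $\{s_{n_k}^2\}_k$ effectively geometric for large $k$, which is exactly what is needed to deploy a block/subsequence argument.

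For the upper bound, fix $\theta > 1$ and $\epsilon > 0$. L\'evy's maximal inequality for sums of symmetric independent random variables gives
\begin{equation*}
\PP\biggl(\max_{j \leq n_{k+1}} |S_j| \geq (1+\epsilon)\sqrt{2 s_{n_{k+1}}^2 \log\log s_{n_{k+1}}^2}\biggr) \leq 2\,\PP\bigl(|S_{n_{k+1}}| \geq (1+\epsilon)\sqrt{2 s_{n_{k+1}}^2 \log\log s_{n_{k+1}}^2}\bigr).
\end{equation*}
Applying the Gaussian tail bound $\PP(|N(0,\tau^2)| \geq x) \leq 2 e^{-x^2/(2\tau^2)}$, the right--hand side is at most $4 (\log \theta^{k+1})^{-(1+\epsilon)^2}$, a summable sequence in $k$. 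The first Borel--Cantelli lemma then yields $\max_{j \leq n_{k+1}} |S_j| < (1+\epsilon)\sqrt{2 s_{n_{k+1}}^2 \log\log s_{n_{k+1}}^2}$ for all $k$ large enough, a.s. Any $n$ satisfies $n_k \leq n \leq n_{k+1}$ for some $k$, and using $s_{n_{k+1}}^2/s_n^2 \leq s_{n_{k+1}}^2/s_{n_k}^2 \to \theta$, this gives $\limsup_n |S_n|/\sqrt{2 s_n^2 \log\log s_n^2} \leq (1+\epsilon)\sqrt{\theta}$ a.s. Letting $\epsilon \downarrow 0$ and $\theta \downarrow 1$ along rational sequences produces $\limsup \leq 1$ a.s.

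For the lower bound, fix $\theta > 1$ large and $\eta, \epsilon \in (0,1)$ small. The increments $Y_k := S_{n_{k+1}} - S_{n_k}$ are independent centred Gaussians with variance $\tau_k^2 := s_{n_{k+1}}^2 - s_{n_k}^2 \sim (1 - 1/\theta)\,s_{n_{k+1}}^2$. Using the Gaussian lower tail $\PP(N(0,1) > x) \geq \frac{1}{\sqrt{2\pi}}(x^{-1} - x^{-3}) e^{-x^2/2}$ with $x = (1-\eta)\sqrt{2 \log\log \tau_k^2}$ shows that $\sum_k \PP\bigl(Y_k > (1-\eta) \sqrt{2 \tau_k^2 \log\log \tau_k^2}\bigr) = +\infty$, so by independence the second Borel--Cantelli lemma gives this event infinitely often. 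Combining with the already--established upper bound applied to $|S_{n_k}|$, a.s.\ infinitely many $k$ satisfy
\begin{equation*}
\frac{S_{n_{k+1}}}{\sqrt{2 s_{n_{k+1}}^2 \log\log s_{n_{k+1}}^2}} \geq (1-\eta)\sqrt{1 - 1/\theta} - (1+\epsilon)\sqrt{1/\theta} + o(1).
\end{equation*}
Taking $\theta \to \infty$ and then $\eta, \epsilon \downarrow 0$ yields $\limsup \geq 1$ a.s. The liminf claim follows by symmetry, applying the $\limsup$ conclusion to the sequence $(-X_n)$.

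The main obstacle is the book--keeping in the lower bound, where the Borel--Cantelli divergent sum and the upper--bound estimate on $|S_{n_k}|$ must be combined delicately so that the constant approaches the sharp value $1$ in the triple limit. The subtlety is that $\theta$ must be taken close to $1$ for the upper bound and large for the lower bound, but because both arguments rely on the same geometric approximation $s_{n_k}^2 \sim \theta^k$ furnished by $\sigma_n = o(s_n)$, the two halves can be combined cleanly once the respective limits are taken.
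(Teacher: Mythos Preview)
Your argument is correct and follows the standard Kolmogorov--Hartman--Wintner approach to the LIL, simplified by the fact that partial sums are exactly Gaussian. The paper, however, does not prove this lemma at all: it merely cites Theorem~2 of Tomkins (1971) and an exercise in Chow--Teicher, so there is no in-paper proof to compare against. What you have written is precisely the kind of self-contained argument those references contain; in particular your use of $\sigma_n = o(s_n)$ to ensure $s_{n_k}^2/\theta^k \to 1$ along the subsequence $n_k = \min\{n: s_n^2 \geq \theta^k\}$, the L\'evy maximal inequality plus first Borel--Cantelli for the upper half, and the second Borel--Cantelli on the independent block increments $Y_k = S_{n_{k+1}} - S_{n_k}$ for the lower half, are all standard and correctly deployed. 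The only cosmetic point is that the ``main obstacle'' paragraph slightly overstates the delicacy: the upper and lower bounds are established for \emph{separate} choices of $\theta$ (close to $1$ and large, respectively), and there is no tension because the two halves are proved independently and then combined.
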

While the above result is sufficient for the analysis of this
article, as Tomkins \cite{tomkins:1971} observes these sufficient
conditions may be sharpened. For instance,  Hartman \cite{hart:1941}
requires only $\limsup_{n\to\infty} \sigma_n/s_n<1$ as opposed to
$\sigma_n/s_n\to0$ as $n\to\infty$ in order to prove a discrete Law
of the Iterated Logarithm in the Gaussian case.
\begin{lemma}\label{lm:intsub}
Let $b<0$. Then the following limits hold:
\begin{equation}\label{eq:quadM2}
    \lim_{t\to\infty}\frac{\pi\sqrt{|b|}\int_{0}^{t} (1+s)^{1/2}\sin^2\left(2\sqrt{|b|(s+1)} -\frac{3}{4}\pi\right)ds}
    {\frac{\pi}{3}|b|^{1/2}(1+t)^{3/2}} =1,
\end{equation}
and
\begin{equation}\label{eq:quadM3}
    \lim_{t\to\infty}\frac{\pi\sqrt{|b|}\int_{0}^{t} (1+s)^{1/2}\cos^2\left(2\sqrt{|b|(s+1)} -\frac{3}{4}\pi\right)ds}
    {\frac{\pi}{3}|b|^{1/2}(1+t)^{3/2}} =1.
\end{equation}
\end{lemma}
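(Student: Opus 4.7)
\textbf{Proof proposal for Lemma~\ref{lm:intsub}.} The plan is to isolate the dominant contribution of the integrand by means of the double-angle identities $\sin^2\theta = \tfrac12(1-\cos 2\theta)$ and $\cos^2\theta = \tfrac12(1+\cos 2\theta)$, show that the non-oscillatory half produces the desired leading asymptotic, and prove that the oscillatory half is of strictly lower order by a change of variables followed by integration by parts.

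Concretely, for \eqref{eq:quadM2} I would first write
\[
\int_{0}^{t}(1+s)^{1/2}\sin^{2}\!\left(2\sqrt{|b|(s+1)}-\tfrac{3}{4}\pi\right)ds
=\tfrac12\!\int_{0}^{t}(1+s)^{1/2}\,ds
-\tfrac12\!\int_{0}^{t}(1+s)^{1/2}\cos\!\left(4\sqrt{|b|(s+1)}-\tfrac{3}{2}\pi\right)ds.
\]
The first integral is elementary and equals $\tfrac{1}{3}\left((1+t)^{3/2}-1\right)$, which is asymptotic to $\tfrac{1}{3}(1+t)^{3/2}$ as $t\to\infty$. Multiplying by $\pi\sqrt{|b|}$ and dividing by $\tfrac{\pi}{3}|b|^{1/2}(1+t)^{3/2}$ gives a contribution that tends to $1$, as required. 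It will therefore suffice to show that the second (oscillatory) integral is $o((1+t)^{3/2})$ as $t\to\infty$.

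To handle the oscillatory integral, I would make the substitution $v=2\sqrt{|b|(1+s)}$, so that $(1+s)^{1/2}=v/(2\sqrt{|b|})$, $ds=v/(2|b|)\,dv$ and $4\sqrt{|b|(1+s)}=2v$. This converts the integral into a constant multiple of $\int v^{2}\cos(2v-\tfrac{3}{2}\pi)\,dv$. Two integrations by parts yield an explicit antiderivative of the form $\tfrac{v^{2}}{2}\sin(2v-\tfrac{3}{2}\pi)+\tfrac{v}{2}\cos(2v-\tfrac{3}{2}\pi)-\tfrac{1}{4}\sin(2v-\tfrac{3}{2}\pi)$, which is $O(v^{2})$ as $v\to\infty$. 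Since $v^{2}=4|b|(1+s)$, evaluating at the endpoints shows that the oscillatory integral is $O(1+t)$, hence $o((1+t)^{3/2})$. This completes the asymptotic evaluation for \eqref{eq:quadM2}.

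The proof of \eqref{eq:quadM3} is identical in structure: using $\cos^{2}\theta=\tfrac12(1+\cos 2\theta)$ leads to exactly the same non-oscillatory leading term $\tfrac{1}{3}(1+t)^{3/2}$, together with an oscillatory integral of the same form that is again $O(1+t)$ by the same change of variables and integration by parts. Since there is no essential obstacle beyond the routine integration-by-parts estimate, the lemma follows; the only point worth care is to keep track of the factors of $|b|$ introduced by the substitution so that they cancel correctly against the $\sqrt{|b|}$ and $|b|^{1/2}$ in the numerator and denominator of the statement.
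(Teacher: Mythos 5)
Your proposal is correct, and it is essentially the same argument as the paper's: the paper substitutes $w=2\sqrt{|b|(s+1)}-\tfrac{3}{4}\pi$ and evaluates $\int_0^x(w+\tfrac{3\pi}{4})^2\sin^2 w\,dw\sim x^3/6$ explicitly, which amounts to exactly your double-angle split plus the observation that the oscillatory remainder is of lower order. The only difference is the order in which the substitution and the identity $\sin^2\theta=\tfrac12(1-\cos 2\theta)$ are applied, and your bookkeeping of the $|b|$ factors checks out.
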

While this lemma amounts to little more than integration by parts,
it serves as an asymptotic estimate of the rate of growth of the
quadratic variation of stochastic integrals to be considered later.
\begin{proof}[Proof of Lemma~\ref{lm:intsub}]
Consider first the limit \eqref{eq:quadM2}. Making the substitution
$w=2\sqrt{|b|(s+1)} -\frac{3}{4}\pi$ in the integral, we get
\begin{align*}
    \pi\sqrt{|b|}\int_{0}^{t} & (1+s)^{1/2}\sin^2\left(2\sqrt{|b|(s+1)} -\frac{3}{4}\pi\right)ds \\
    &= \frac{\pi}{4|b|} \int_{2\sqrt{|b|}-3\pi/4}^{2\sqrt{|b|(1+t)}-3\pi/4} \left(w+\frac{3\pi}{4}\right)^2 \sin^2(w)\, dw.
%
%
\end{align*}
Since $\int_0^x (w+3\pi/4)^2 \sin^2(w)\, dw$  can be computed
explicitly for $x\geq 0$, and this leads to
\[
\lim_{x\to\infty} \frac{1}{x^3}\int_0^x (w+\frac{3\pi}{4})^2
\sin^2(w)\, dw=\frac{1}{6},
\]
 \eqref{eq:quadM2} holds. Similar calculations confirm the limit \eqref{eq:quadM3}
\end{proof}
We next introduce functions which correspond to the leading order
asymptotic behaviour of $r_7$, $r_8$, $d_7$ and $d_8$. Define the
functions, for $t\geq0$
\begin{subequations}  \label{eq:g1g2g3g4}
\begin{align}
\label{eq:g1g2g3g4g1}
    g_1(t) &= \frac{1}{\sqrt{\pi}} |b|^{-1/4}(1+t)^{-1/4} \cos(2\sqrt{|b|(1+t)} - \pi/4), \\
    \label{eq:g1g2g3g4g2}
    g_2(t) &= \sqrt{\pi} |b|^{1/4} (1+t)^{1/4} \sin\left(2\sqrt{|b|(1+t)}-\frac{3}{4}\pi\right), \\
    \label{eq:g1g2g3g4g3}
    g_3(t) &= \frac{1}{\sqrt{\pi}} |b|^{-1/4}(1+t)^{-1/4} \sin(2\sqrt{|b|(1+t)} - \pi/4), \\
    \label{eq:g1g2g3g4g4}
    g_4(t) &= \sqrt{\pi} |b|^{1/4} (1+t)^{1/4} \cos\left(2\sqrt{|b|(1+t)}-\frac{3}{4}\pi\right).
\end{align}
\end{subequations}
We aim to show that these leading order terms describe a continuous
time process which obeys the law of the iterated logarithm along
many carefully designed sequences. These sequences will later be
used to extrapolate the asymptotic behaviour of the continuous time
process to the positive real line.
\begin{lemma}\label{lm:seqbes}
Fix $\eta\in[0,\pi/2)$. Define the sequence $\{t_n:n\in\Z^+\}$ such
that
\[
    t_0=0, \quad t_n = |b|^{-1}(n\pi+\pi/8+ \lceil \sqrt{|b|}/\pi -1/8 \rceil\pi +\eta/2)^2 -1, \quad n\geq1.
\]
If $g_1$--$g_4$ are defined by \eqref{eq:g1g2g3g4}, then
\begin{align}\label{eq.limsupgn}
    \limsup_{n\to\infty}\frac{g_1(t_n)\int_{0}^{t_n}g_2(s)dB(s)+g_3(t_n)\int_{0}^{t_n}g_4(s)dB(s)}{\sqrt{2 t_n\log\log t_n}}
    &= \frac{1}{\sqrt{3}}, \quad \text{a.s.}, \\
    \label{eq.liminfgn}
    \liminf_{n\to\infty}\frac{g_1(t_n)\int_{0}^{t_n}g_2(s)dB(s)+g_3(t_n)\int_{0}^{t_n}g_4(s)dB(s)}{\sqrt{2 t_n\log\log t_n}}
    &= -\frac{1}{\sqrt{3}}, \quad \text{a.s.}
\end{align}
\end{lemma}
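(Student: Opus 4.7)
The plan is to reduce the claim to the discrete Gaussian law of the iterated logarithm in Lemma~\ref{lm:norlil}, with the specific choice of $t_n$ engineered so that, at the sample times, the trigonometric factors in $g_1$ and $g_3$ collapse to $\cos\eta$ and $\sin\eta$. Precisely, since $\sqrt{|b|(1+t_n)} = n\pi + \pi/8 + \lceil\sqrt{|b|}/\pi - 1/8\rceil\pi + \eta/2$, we have $2\sqrt{|b|(1+t_n)} - \pi/4 \equiv \eta \pmod{2\pi}$, so
\[
g_1(t_n) = \frac{|b|^{-1/4}}{\sqrt{\pi}}(1+t_n)^{-1/4}\cos\eta, \qquad g_3(t_n) = \frac{|b|^{-1/4}}{\sqrt{\pi}}(1+t_n)^{-1/4}\sin\eta.
\]
Hence the numerator in \eqref{eq.limsupgn}--\eqref{eq.liminfgn} equals $\frac{|b|^{-1/4}}{\sqrt{\pi}}(1+t_n)^{-1/4} S_n$, where $S_n := \cos\eta \int_0^{t_n} g_2\,dB + \sin\eta \int_0^{t_n} g_4\,dB = \int_0^{t_n} h(s)\,dB(s)$ with $h(s) := \cos\eta\, g_2(s) + \sin\eta\, g_4(s)$.

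Next I would write $S_n = \sum_{j=1}^n X_j$ with $X_j := \int_{t_{j-1}}^{t_j} h(s)\,dB(s)$; by independence of Brownian increments and It\^o's isometry these are independent, centred Gaussians with $\sigma_j^2 = \int_{t_{j-1}}^{t_j} h(s)^2\,ds$, so $s_n^2 := \sum_{j=1}^n \sigma_j^2 = \int_0^{t_n} h(s)^2\,ds$. Expanding the square and invoking Lemma~\ref{lm:intsub} gives $\int_0^{t_n}(g_2^2 + g_4^2)\,ds \sim \tfrac{2\pi|b|^{1/2}}{3}(1+t_n)^{3/2}$. Using the product-to-sum identity, $g_2(s)g_4(s) = \tfrac{\pi|b|^{1/2}}{2}(1+s)^{1/2}\sin(4\sqrt{|b|(1+s)} - 3\pi/2)$, and a single integration by parts after the substitution $u = 4\sqrt{|b|(1+s)}$ yields $\int_0^t g_2(s)g_4(s)\,ds = O(1+t)$, which is absorbed into the lower-order terms. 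Since $\cos^2\eta + \sin^2\eta = 1$, this gives
\[
s_n^2 \sim \frac{\pi|b|^{1/2}}{3}(1+t_n)^{3/2}.
\]

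To invoke Lemma~\ref{lm:norlil} I would verify $\sigma_n = o(s_n)$: since $t_n = |b|^{-1}(n\pi+C)^2 - 1$ gives $t_n - t_{n-1} = O(n) = O(t_n^{1/2})$ and the integrand $h^2$ is bounded in magnitude by $O((1+s)^{1/2})$, one obtains $\sigma_n^2 = O(t_n)$, hence $\sigma_n/s_n = O(t_n^{-1/4}) \to 0$. The discrete LIL then delivers
\[
\limsup_{n\to\infty}\frac{S_n}{\sqrt{2 s_n^2 \log\log s_n^2}} = 1 = -\liminf_{n\to\infty}\frac{S_n}{\sqrt{2 s_n^2 \log\log s_n^2}}, \quad\text{a.s.}
\]
Noting that $\log\log s_n^2 \sim \log\log t_n$, the numerator in the lemma, divided by $\sqrt{2 t_n \log\log t_n}$, has limsup equal to
\[
\frac{|b|^{-1/4}}{\sqrt{\pi}}\lim_{n\to\infty}\frac{(1+t_n)^{-1/4}\sqrt{2 s_n^2 \log\log s_n^2}}{\sqrt{2 t_n \log\log t_n}} = \frac{|b|^{-1/4}}{\sqrt{\pi}}\sqrt{\frac{\pi|b|^{1/2}}{3}} = \frac{1}{\sqrt{3}},
\]
and symmetrically for the liminf, proving the claim.

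The main obstacle I anticipate is the cross-term bound $\int_0^t g_2 g_4 \,ds = O(1+t)$: it is essential that this is strictly smaller than the $(1+t)^{3/2}$ contributions of $g_2^2$ and $g_4^2$, since otherwise the LIL constant $1/\sqrt{3}$ would depend on $\eta$, contradicting the uniform statement. The integration-by-parts argument on the oscillating integrand, exploiting that its phase $4\sqrt{|b|(1+s)}$ has a derivative comparable to $(1+s)^{-1/2}$, is what makes this work. All other ingredients—the telescoping decomposition, independence of increments, and the growth rate $t_n - t_{n-1} = O(t_n^{1/2})$—are straightforward once the sampling points are understood.
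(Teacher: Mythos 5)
Your proposal is correct and follows essentially the same route as the paper: both reduce the claim to the discrete Gaussian law of the iterated logarithm (Lemma~\ref{lm:norlil}) by exploiting that at the engineered times $t_n$ the phases satisfy $2\sqrt{|b|(1+t_n)}-\pi/4\equiv\eta\pmod{2\pi}$, so the numerator becomes $(1+t_n)^{-1/4}$ times a single Gaussian martingale sampled along $t_n$, and then verify $s_n^2\to\infty$ and $\sigma_n=o(s_n)$. The only (cosmetic) difference is that the paper combines $\cos\eta\, g_2+\sin\eta\, g_4$ into one phase-shifted sinusoid $(1+s)^{1/4}\sin(2\sqrt{|b|(1+s)}-3\pi/4+\eta)$ via the angle-addition identity before computing the quadratic variation, whereas you expand the square and bound the oscillatory cross term $\int_0^t g_2(s)g_4(s)\,ds=O(t)$ separately; both give $s_n^2\sim\tfrac{\pi}{3}|b|^{1/2}t_n^{3/2}$ and hence the constant $1/\sqrt{3}$.
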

\begin{proof}[Proof of Lemma~\ref{lm:seqbes}]
We start by noticing that $t_n\geq 0$ for all $n\geq 0$ and
therefore $(t_n)_{n\geq 1}$ is a increasing sequence.
Note also that
\begin{equation} \label{eq.rttnn}
2\sqrt{|b|(t_n+1)}=2n\pi + \frac{\pi}{4}+\eta+ 2\pi L_b,
\end{equation}
where
\begin{equation}\label{def.Lb}
L_b:=\lceil \sqrt{|b|}/\pi -1/8 \rceil \geq \sqrt{|b|}/\pi -1/8 \geq
-1/8.
\end{equation}
Therefore, as $L_b\in\mathbb{Z}$, we see that we must have $L_b$ a
non--negative integer. For all $n\in\Z$ let $\beta=\beta_\eta$ be
the number such that $\cos(2n\pi+\eta)=\beta\in(0,1]$ and it is to
be noted that $\beta$ does not depend upon $n$. Then
\eqref{eq.rttnn} implies
\begin{equation}\label{eq:sigcs}
    \cos(2\sqrt{|b|(1+t_n)}-\pi/4)=\beta, \quad \text{and hence}\quad \sin(2\sqrt{|b|(1+t_n)}-\pi/4)=\sqrt{1-\beta^2}.
\end{equation}
Our plan now is to establish that
\[
\int_{0}^{t_n}[g_2(s)g_1(t_n)+g_4(s)g_3(t_n)]dB(s)
\]
gives rise to a discrete--time Gaussian martingale, to which
Lemma~\ref{lm:norlil} can be applied. To do this, we write
\begin{align}\label{eq:sumdrn2}
    &\frac{\int_{0}^{t_n}[g_2(s)g_1(t_n)+g_4(s)g_3(t_n)]dB(s)}{\sqrt{2t_n\log\log t_n}} \\
    &= (1+t_n)^{-1/4} \biggl(
    \frac{\int_{0}^{t_n}(s+1)^{1/4} \sin\left(2\sqrt{|b|(s+1)}-\frac{3}{4}\pi\right) \beta dB(s)}{\sqrt{2t_n\log\log t_n}} \notag\\
    &\qquad +\frac{\int_{0}^{t_n}(s+1)^{1/4} \cos\left(2\sqrt{|b|(s+1)}-\frac{3}{4}\pi\right) \sqrt{1-\beta^2}dB(s)}{\sqrt{2t_n\log\log t_n}}\biggr) \notag \\
    &= (1+t_n)^{-1/4}
    \frac{\int_{0}^{t_n}(s+1)^{1/4}\sin\left(2\sqrt{|b|(s+1)}-\frac{3}{4}\pi\ + \eta \right)dB(s)}{\sqrt{2t_n\log\log t_n}} \notag,
\end{align}
where we have used \eqref{eq:sigcs} at the last step. As the last
stochastic integral on the right  hand side does not depend upon $n$
in the integrand, we can decompose the integral and apply
Lemma~\ref{lm:norlil} to it. We therefore define for $n\geq 1$
\[
    S_n := \sum_{j=1}^{n}Y_j, \text{ where }
    Y_j=\int_{t_{j-1}}^{t_j}(s+1)^{1/4}\sin\left(2\sqrt{|b|(s+1)}-\frac{3}{4}\pi\ + \eta \right)\,dB(s).
\]
Then $Y_j$ is a Gaussian distributed random variable with mean zero
and variance
\[
    \sigma_j^2:=\int_{t_{j-1}}^{t_j}(s+1)^{1/2}\sin^2\left(2\sqrt{|b|(s+1)}-\frac{3}{4}\pi\ + \eta \right)\,ds
\]
and $S_n$ is a Gaussian distributed random variable with mean zero
and variance
\[
    s_n^2=\sum_{j=0}^{n}\sigma_j^2 = \int_{0}^{t_n}(s+1)^{1/2}\sin^2\left(2\sqrt{|b|(s+1)}-\frac{3}{4}\pi\ + \eta \right)\,ds.
\]
We wish to ascertain the rate of growth of both $\sigma_j^2$ and
$s_n^2$. Define
\[
    M_\eta(t) = \int_{0}^{t}(1+s)^{1/4} \sin(2\sqrt{|b|(1+s)}-3\pi/4 + \eta) dB(s), \quad t\geq 0.
\]
Then $M_\eta$ is a continuous martingale and its quadratic variation
is given by
\[
    \langle M_\eta\rangle(t)
    = \int_{0}^{t} (1+s)^{1/2} \sin^2(2\sqrt{|b|(1+s)}-3\pi/4 + \eta)\,ds, \quad t\geq 0.
\]
Therefore we have that
\[
\langle M_\eta\rangle(t) =
\frac{1}{4|b|^{3/2}}\int_{2\sqrt{|b|}-\frac{3\pi}{4}+\eta}^{2\sqrt{|b|(1+t)}-\frac{3\pi}{4}+\eta}
\left(w+\frac{3\pi}{4}-\eta\right)^2\sin^2(w)\,dw,  \quad t\geq 0.
\]
An explicit calculation following exactly the model of
Lemma~\ref{lm:intsub} shows that
\[
    \langle M_\eta\rangle(t) \sim \frac{1}{3}t^{3/2}, \quad \text{as $t\to\infty$}.
\]
We remark that the asymptotic behaviour of the quadratic variation
is independent of $\eta$. Thus, since $t_n\sim n^2\pi^2/|b|$ as
$n\to\infty$, we have that
\[
    s_n^2 = \langle M_\eta\rangle(t_n) \sim \frac{1}{3}t_n^{3/2} \sim \frac{n^3\pi^3}{3 |b|^{3/2}} \quad \text{ as $n\to\infty$}.
\]
For $n\geq1$, by \eqref{eq.rttnn} we have
\begin{align*}
    \sigma^2_n &= \langle M_\eta\rangle(t_n) - \langle M_\eta\rangle(t_{n-1})\\
    &=\frac{1}{4|b|^{3/2}}\int_{2\sqrt{|b|(1+t_{n-1})}-\frac{3\pi}{4}+\eta}^{2\sqrt{|b|(1+t_n)}-\frac{3\pi}{4}+\eta}
\left(w+\frac{3\pi}{4}-\eta\right)^2\sin^2(w)\,dw\\
&\leq \frac{1}{4|b|^{3/2}}\int_{2(n-1)\pi -\pi/2 +2\eta +2\pi
L_b}^{2n\pi -\pi/2 +2\eta +2\pi L_b}
\left(w+\frac{3\pi}{4}-\eta\right)^2\,dw\\
&=\frac{1}{12|b|^{3/2}}\left( (2n\pi +\pi/4 +\eta +2\pi
L_b)^3-(2(n-1)\pi +\pi/4 +\eta +2\pi L_b)^3\right).
\end{align*}
Therefore we have that $\sigma_n^2=O(n^2)=O(t_n)$ as $n\to\infty$.
Hence $\lim_{n\to\infty} \sigma_n/s_n=0$. Thus all the conditions of
Lemma~\ref{lm:norlil} are satisfied and so the discrete Law of the
Iterated Logarithm may be applied to $S_n$ (or equivalently, to
$M_{\eta}(t_n)$). Therefore by \eqref{eq:sumdrn2}, and by using the
fact that
\begin{align*}
    \lim_{n\to\infty}\frac{t_n^{-1/4}}{\sqrt{2t_n\log\log t_n}} \sqrt{2\langle M_\eta\rangle(t_n) \log\log\langle M_\eta\rangle(t_n)}
    =\frac{1}{\sqrt{3}},
\end{align*}
gives the limit superior in \eqref{eq.limsupgn}. The limit inferior
in \eqref{eq.liminfgn} may be obtained via a symmetry argument.
\end{proof}
\begin{remark}\label{rk:seqbes}
    Although Lemma~\ref{lm:seqbes} fixes $\eta$ in the interval $[0,\pi/2)$, it is apparent from the proof of this
    lemma that one is free to choose $\eta$ in any of the non--overlapping intervals $[\pi/2,\pi)$, $[\pi,3\pi/2)$
    or $[3\pi/2,2\pi)$. The only amendments in the proof that would result from choosing $\eta$ in these other intervals
    would be changes in the signs of the cosine and sine terms in \eqref{eq:sigcs}.
\end{remark}
\begin{lemma}\label{lm:seqden}
Fix $k\in\Z^+$. Define the sequence $\{t_n^{(k)}:n\in\Z^+\}$ by
$t_0^{(k)}=0$ and
\begin{equation}\label{eq:alltn}
    t_j^{(k)} = \frac{1}{|b|}\left(N_j \pi +\left\lceil \frac{\sqrt{|b|}}{\pi} -\frac{1}{8} \right\rceil\pi + \frac{\eta^{(j,k)}}{2}
    + \frac{\pi}{8} \right)^2-1, \quad j\geq1
\end{equation}
where
\[
    N_j = \left\lceil\frac{j}{2^{2+k} } \right\rceil -1, \quad
    i_j = j - 2^{2+k}N_j-1, \quad \eta^{(j,k)} = \frac{i_j}{2^k}\frac{\pi}{2},
\]
so that $i_j\in\{0,1,\dots,2^2 2^{k}-1\}$.
Then
\begin{subequations}\label{eq:lilgg}
\begin{align*}
    \limsup_{n\to\infty}\frac{g_1(t_n^{(k)})\int_{0}^{t_n^{(k)}}g_2(s)dB(s)+g_3(t_n^{(k)})\int_{0}^{t_n^{(k)}}g_4(s)dB(s)}
    {\sqrt{2 t_n^{(k)}\log\log t_n^{(k)}}}
    &= \frac{1}{\sqrt{3}}, \quad a.s., \\
    \liminf_{n\to\infty}\frac{g_1(t_n^{(k)})\int_{0}^{t_n^{(k)}}g_2(s)dB(s)+g_3(t_n^{(k)})\int_{0}^{t_n^{(k)}}g_4(s)dB(s)}
    {\sqrt{2 t_n^{(k)}\log\log t_n^{(k)}}}
    &= -\frac{1}{\sqrt{3}}, \quad a.s.
\end{align*}
\end{subequations}
where $g_1$, $g_2$, $g_3$ and $g_4$ are as defined in
\eqref{eq:g1g2g3g4}.
Also,
\begin{align} \label{eq.Njtjasy}
    &N_j \sim \frac{j}{2^{2}2^{k}}, \quad
    t_j^{(k)}\sim \frac{1}{|b|}N_j^2 \pi^2 \sim \frac{1}{|b|}\frac{1}{2^4 2^{2k}} j^2\pi^2, \quad \text{ as $j\to\infty$},  \\
    &\Delta t_j^{(k)}:= t_{j+1}^{(k)} - t_j^{(k)} \sim \frac{1}{|b|}N_j\frac{1}{2^k}\frac{\pi^2}{2}
    \sim \frac{1}{|b|}\frac{j}{2^{2k}}\frac{\pi^2}{2^3} \quad \text{ as $j\to\infty$}. \label{eq:jall}
\end{align}
\end{lemma}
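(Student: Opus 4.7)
The plan is to recognise that the sequence $\{t_j^{(k)}\}_{j\geq 1}$ defined in \eqref{eq:alltn} is a deterministic interleaving of $2^{k+2}$ arithmetic subsequences, each of which falls directly under the hypothesis of Lemma~\ref{lm:seqbes} with a fixed value of $\eta$ (taken in $[0,2\pi)$, as permitted by Remark~\ref{rk:seqbes}). For each $i\in\{0,1,\ldots,2^{k+2}-1\}$, set $\mathcal{J}_i=\{j\geq 1: i_j=i\}$. From $j=2^{k+2}N_j+i_j+1$ one reads off that $\mathcal{J}_i=\{2^{k+2}m+i+1: m\geq 0\}$, and on this subsequence $\eta^{(j,k)}=\eta_i:=i\pi/2^{k+1}$ is constant. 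Substituting into \eqref{eq:alltn},
\[
t_{2^{k+2}m+i+1}^{(k)} = |b|^{-1}\Bigl(m\pi + \lceil \sqrt{|b|}/\pi -1/8\rceil\pi + \tfrac{\eta_i}{2} + \tfrac{\pi}{8}\Bigr)^{\!2}-1,
\]
which, reindexed by $m$, is exactly the sequence of Lemma~\ref{lm:seqbes} with $\eta=\eta_i$.

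The first step is therefore to apply Lemma~\ref{lm:seqbes} (together with Remark~\ref{rk:seqbes}) to each of the $2^{k+2}$ subsequences. Writing $G(t):=g_1(t)\int_0^t g_2(s)\,dB(s)+g_3(t)\int_0^t g_4(s)\,dB(s)$, this yields, for each $i$, an almost-sure event $\Omega_i$ on which
\[
\limsup_{\substack{j\to\infty\\ j\in\mathcal{J}_i}}\frac{G(t_j^{(k)})}{\sqrt{2t_j^{(k)}\log\log t_j^{(k)}}}=\tfrac{1}{\sqrt 3},\qquad \liminf_{\substack{j\to\infty\\ j\in\mathcal{J}_i}}\frac{G(t_j^{(k)})}{\sqrt{2t_j^{(k)}\log\log t_j^{(k)}}}=-\tfrac{1}{\sqrt 3}.
\]
The second step is to take the intersection $\Omega^*=\bigcap_{i=0}^{2^{k+2}-1}\Omega_i$, which is a finite intersection and hence almost sure. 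Since $\{1,2,\ldots\}=\bigcup_i\mathcal{J}_i$, the global limsup along $j\to\infty$ equals the maximum over $i$ of the subsequence limsups on $\Omega^*$, which is $1/\sqrt{3}$; the argument for the liminf is identical (or obtained by symmetry $B\mapsto -B$). This delivers the two displayed limits of the lemma.

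For the elementary asymptotics \eqref{eq.Njtjasy} and \eqref{eq:jall}, the estimate $N_j\sim j/2^{k+2}$ is immediate from $N_j=\lceil j/2^{k+2}\rceil-1$, and squaring the defining expression gives $t_j^{(k)}\sim |b|^{-1}N_j^2\pi^2$. For the increment, set $A_j:=N_j\pi+\lceil \sqrt{|b|}/\pi-1/8\rceil\pi+\eta^{(j,k)}/2+\pi/8$, so that $t_j^{(k)}=|b|^{-1}A_j^2-1$. A short case analysis shows that $A_{j+1}-A_j=\pi/2^{k+2}$ \emph{uniformly} in $j$: within a block ($N_{j+1}=N_j$) the increment in $\eta^{(j,k)}$ is $\pi/2^{k+1}$, contributing $\pi/2^{k+2}$; at a block boundary the jump $\Delta N_j=1$ in $N_j$ combines with the $\eta^{(j,k)}$ reset to give $\pi-(2^{k+2}-1)\pi/2^{k+2}=\pi/2^{k+2}$. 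Therefore
\[
\Delta t_j^{(k)}=|b|^{-1}(A_{j+1}+A_j)(A_{j+1}-A_j)\sim |b|^{-1}\cdot 2A_j\cdot \tfrac{\pi}{2^{k+2}}\sim \tfrac{1}{|b|}N_j\tfrac{1}{2^{k}}\tfrac{\pi^2}{2},
\]
and substituting $N_j\sim j/2^{k+2}$ yields the second estimate in \eqref{eq:jall}.

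There is no real obstacle: all of the probabilistic content is already encoded in Lemma~\ref{lm:seqbes}, and the present result is essentially a bookkeeping exercise to show that the denser sequence $\{t_j^{(k)}\}$ still achieves the same limsup and liminf. The only point requiring a moment of care is the uniform-in-$j$ nature of the increment $\Delta A_j$, which is what ensures that the asymptotic \eqref{eq:jall} holds regardless of block boundaries.
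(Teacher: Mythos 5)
Your proposal is correct and follows essentially the same route as the paper: both decompose $\{t_j^{(k)}\}$ into the $2^{k+2}$ subsequences on which $\eta^{(j,k)}$ is constant, apply Lemma~\ref{lm:seqbes} together with Remark~\ref{rk:seqbes} to each, and intersect the finitely many almost sure events, with the asymptotics \eqref{eq.Njtjasy} and \eqref{eq:jall} handled by the same elementary estimates. Your observation that $A_{j+1}-A_j=\pi/2^{k+2}$ uniformly in $j$ is a slightly cleaner packaging of the paper's two-case (interior versus block-boundary) computation of $\Delta t_j^{(k)}$, but the substance is identical.
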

\begin{proof}[Proof of Lemma~\ref{lm:seqden}]
Define $\beta_{j,k}^{(i)}:=\cos(\eta_{i}^{(j,k)})$, where
\[
    \eta_{i}^{(j,k)} := (i-1)\frac{\pi}{2} + \frac{j}{2^k}\frac{\pi}{2}, \quad i\in\{1,2,3,4\}, \quad j\in\{0,1,...2^{k}-1\}.
\]
Now define the following $4\times 2^k$ sequences. For each $j\in
\{0,1,...2^{k}-1\}$, we define for $n\geq 0$
\begin{align*}
    \tau_{n}^{(j,k)} &= |b|^{-1}(n\pi+\pi/8 
    +\eta_1^{(j,k)}/2)^2 -1, \\
    T_{n}^{(j,k)} &= |b|^{-1}(n\pi+\pi/8 
    + \eta_2^{(j,k)}/2)^2 -1, \\
    \theta_{n}^{(j,k)} &= |b|^{-1}(n\pi+\pi/8 
    +\eta_3^{(j,k)}/2)^2 -1, \\
    \Theta_{n}^{(j,k)} &= |b|^{-1}(n\pi+\pi/8 
    +\eta_4^{(j,k)}/2)^2 -1.
\end{align*}
Notice that each of these sequences is  increasing.
Then the sequence $\{\tau_{n}^{(j,k)}\}_{n\geq0}$ may be expressed
in terms of $\beta_{j,k}^{(1)}$ (which is independent of $n$)
according to
\[
    \beta_{j,k}^{(1)} = \cos(\eta_{i}^{(j,k)}) = \cos\left(2\sqrt{|b|(\tau_{n}^{(j,k)}+1)} -\pi/4\right).
\]
Similarly $T_{n}^{(j,k)},\theta_{n}^{(j,k)},\Theta_{n}^{(j,k)}$ may
be expressed in terms of
$\beta_{j,k}^{(2)},\beta_{j,k}^{(3)},\beta_{j,k}^{(4)}$
respectively.

Define
\[
    \bar{Y}(t) := \frac{g_1(t)\int_{0}^{t}g_2(s)dB(s)+g_3(t)\int_{0}^{t}g_4(s)dB(s)}{\sqrt{2 t\log\log t}}, \quad t \geq e^e.
\]
Then, from Lemma~\ref{lm:seqbes}, for each
$j\in\{0,1,...,2^{k}-1\}$,
\[
    \limsup_{n\to\infty}\bar{Y}(\tau_{n}^{(j,k)}) = -\liminf_{n\to\infty}\bar{Y}(\tau_{n}^{(j,k)}) = \frac{1}{\sqrt{3}},
\]
on an event of probability one, $\Omega_1^{(j,k)}$. Using
Lemma~\ref{lm:seqbes} in conjunction with Remark~\ref{rk:seqbes}
gives
\begin{align*}
    \limsup_{n\to\infty}\bar{Y}(T_{n}^{(j,k)}) &= -\liminf_{n\to\infty}\bar{Y}(T_{n}^{(j,k)}) = \frac{1}{\sqrt{3}}, \\
    \limsup_{n\to\infty}\bar{Y}(\theta_{n}^{(j,k)}) &= -\liminf_{n\to\infty}\bar{Y}(\theta_{n}^{(j,k)}) = \frac{1}{\sqrt{3}}, \\
    \limsup_{n\to\infty}\bar{Y}(\Theta_{n}^{(j,k)}) &= -\liminf_{n\to\infty}\bar{Y}(\Theta_{n}^{(j,k)}) = \frac{1}{\sqrt{3}},
\end{align*}
on almost sure events, $\Omega_2^{(j,k)}$,$\Omega_3^{(j,k)}$ and
$\Omega_4^{(j,k)}$ respectively. Now,
\begin{align*}
&\tau_{n}^{(0,k)}<\tau_n^{(1,k)}<...<\tau_n^{(2^k-1,k)}<T_n^{(0,k)}<...<T_n^{(2^k-1,k)}<\theta_n^{(0,k)}<...<\theta_n^{(2^k-2,k)} \\
&<\theta_n^{(2^k-1,k)}<\Theta_n^{(0,k)}<...<\Theta_n^{(2^k-1,k)}
\end{align*}
and $\Theta_n^{(2^k-1,k)}<\tau_n^{(0,k)}$. Observe that the sequence
$\{t_n^{(k)}\}_{n\geq0}$, defined in the statement of this Lemma,
obeys, for $j\geq1$
\[
t_j^{(k)}=
\begin{cases}
    \tau_{N_j+\lceil \sqrt{|b|}/\pi -1/8 \rceil}^{(i_j,k)}, &\quad i_j\in\{0,...,2^k-1\}, \\
    T_{N_j+\lceil \sqrt{|b|}/\pi -1/8 \rceil}^{(i_j-2^k,k)}, &\quad i_j\in\{2^k,...,2.2^k-1\}, \\
    \theta_{N_j+\lceil \sqrt{|b|}/\pi -1/8 \rceil}^{(i_j-2.2^k,k)}, &\quad i_j\in\{2.2^k,...,3.2^k-1\}, \\
    \Theta_{N_j+\lceil \sqrt{|b|}/\pi -1/8 \rceil}^{(i_j-3.2^k,k)}, &\quad i_j\in\{3.2^k,...,4.2^k-1\},
\end{cases}
\]
Hence, defining $\Omega_5^{(k)}=
\bigcap_{i=1}^{4}\bigcap_{j=0}^{2^k-1}\Omega_i^{(j,k)}$ and noting
that $\Omega_5^{(k)}$ is an almost sure event, we have that
\[
    \limsup_{n\to\infty}\bar{Y}(t_n^{(k)})= -\liminf_{n\to\infty}\bar{Y}(t_n^{(k)}) = \frac{1}{\sqrt{3}},
\]
on the event $\Omega_5^{(k)}$, which is \eqref{eq:lilgg}.

We turn next to determining the asymptotic behaviour of the
sequences $N_j$, $t_j^{(k)}$, $\Delta t_j^{(k)}$ as $j\to\infty$. We
start with $N_j$. By definition,
we have $    j/(2^2.2^k)-1 \leq N_j < j/(2^2.2^k)$, and thus,
$1/(2^2.2^k)-1/j \leq N_j/j < 1/(2^2.2^k)$. Now letting $j$ tend to
infinity and we have $\lim_{j\to\infty}N_j/j = 1/2^{2+k}$. Moreover
as $\eta^{(j,k)}$ is bounded we have
$\lim_{j\to\infty}\eta^{(j,k)}/j=0$. Then from the definition of the
sequence $\{t_n^{(k)}\}_{n\geq0}$ it follows that
\[
    t_j^{(k)}\sim \frac{1}{|b|}N_j^2 \pi^2 \sim \frac{1}{|b|}\frac{1}{2^4 2^{2k}} j^2\pi^2, \quad \text{as $j\to\infty$}.
\]
In determining the asymptotic behaviour of $\Delta t_j^{(k)}$ we
first consider the asymptotic behaviour of
$\Delta\eta^{(j+1,k)}:=\eta^{(j+1,k)}-\eta^{(j,k)}$ for large $j$.
From the definition of $\eta^{(j,k)}$ it is trivially true that
$\Delta \eta^{(j,k)} =\pi/2^{1+k}$ whenever $N_{j+1}=N_j$. Moreover
the only values of $j$ for which $N_{j+1}\not=N_j$ are values of the
type $j=m.2^{2+k}$ for $m\in \{1,2,...\}$. So, if $j\not=m.2^{2+k}$
and $j\geq1$, we get
\begin{align*}
    \Delta t_j^{(k)} &= \frac{1}{|b|}\left(N_j\pi + L_b \pi + \frac{\pi}{8} + \frac{\eta^{(j,k)}}{2} + \frac{\Delta \eta^{(j,k)}}{2} \right)^2 - 1 \\
    &\quad- \frac{1}{|b|}\left(N_j\pi + L_b \pi + \frac{\pi}{8}
    + \frac{\eta^{(j,k)}}{2} \right)^2 + 1 \\
    &= \frac{2}{|b|}\left( N_j\pi + L_b \pi + \frac{\pi}{8}
    + \frac{\eta^{(j,k)}}{2}\right)\frac{\Delta \eta^{(j,k)}}{2} +  \frac{1}{|b|}\frac{(\Delta \eta^{(j,k)})^2}{4}.
\end{align*}
Thus,
\begin{equation}\label{eq:jmid}
    \Delta t_j^{(k)} \sim \frac{2}{|b|}N_j\pi \frac{\Delta \eta^{(j,k)}}{2}
    = \frac{N_j \pi^2}{|b|2.2^k} \sim \frac{j \pi^2}{|b|2^3.2^{2k}}, \quad \text{as $j\to\infty$}.
\end{equation}
If $j=m.2^{2+k}$ for $m\in\{1,2,...\}$, we have $N_{j+1}=N_{j}+1=m$
(as we are interested in the asymptotic behaviour of $\eta^{(j,k)}$
for large $j$ we may exclude $m=0$ from our analysis). In this case,
\[
    \eta^{(j+1,k)} = \frac{(j+1 - 2^{2+k}N_{j+1}-1)}{2^k}\frac{\pi}{2} = \frac{(m.2^{2+k}+1 - m.2^{2+k}-1)}{2^k}\frac{\pi}{2}=0
\]
while
\[
\eta^{(j,k)} = \frac{(j - 2^{2+k}N_{j}-1)}{2^k}\frac{\pi}{2} =
\frac{(m.2^{2+k} -
(m-1)2^{2+k}-1)}{2^k}\frac{\pi}{2}=\frac{(2^{2+k}-1)}{2^k}\frac{\pi}{2}.
\]
This gives
\begin{align*}
    \Delta t_j^{(k)} &= \frac{1}{|b|}\left(N_{j+1}\pi + L_b\pi
    + \frac{\pi}{8} + \frac{\eta^{(j+1,k)}}{2} \right)^2 - 1 \\
    &\quad- \frac{1}{|b|}\left(N_j\pi + L_b\pi + \frac{\pi}{8}
    + \frac{\eta^{(j,k)}}{2} \right)^2 + 1 \\
    &= \frac{1}{|b|}\left(N_{j}\pi + L_b\pi + \frac{\pi}{8}
    + \pi \right)^2 \\
    &\quad- \frac{1}{|b|}\left(N_j\pi + L_b\pi + \frac{\pi}{8}
    + \frac{(2^{2+k}-1)}{2.2^k}\frac{\pi}{2} \right)^2  \\
    &= \frac{2}{|b|}\left( N_j\pi + L_b \pi + \frac{\pi}{8}\right) \frac{\pi}{2^2.2^k} +  \frac{\pi^2}{|b|}\frac{2^{3+2k}-1}{2^{4+2k}}.
\end{align*}
Thus, as $j=m.2^{2+k}$,
\begin{equation}\label{eq:jend}
    \Delta t_{m.2^{2+k}}^{(k)} \sim \frac{2}{|b|} m \frac{\pi^2}{2^2.2^k}, \quad \text{as $m\to\infty$}.
\end{equation}
Therefore \eqref{eq:jend} together with \eqref{eq:jmid} yields
\eqref{eq:jall}.
\end{proof}
\begin{lemma}\label{lm:uplil}
    Let $g_1$,$g_2$,$g_3$ and $g_4$ be as defined in \eqref{eq:g1g2g3g4}. Let
    \[
        Y(t) := g_1(t)\int_{0}^{t}g_2(s)dB(s)+g_3(t)\int_{0}^{t}g_4(s)dB(s), \quad t\geq0.
    \]
    Then,
    \[
        \limsup_{t\to\infty}\frac{Y(t)}{\sqrt{2t\log\log t}} = \frac{1}{\sqrt{3}},  \quad
        \liminf_{t\to\infty}\frac{Y(t)}{\sqrt{2t\log\log t}} = -\frac{1}{\sqrt{3}}, \quad \text{a.s.}
    \]
\end{lemma}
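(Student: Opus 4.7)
The plan is to use Lemma~\ref{lm:seqden} to obtain the ``$\geq$'' side of the limit superior and the symmetric ``$\leq$'' side of the limit inferior: for any fixed $k$,
\[
    \limsup_{t\to\infty}\frac{Y(t)}{\sqrt{2t\log\log t}}
    \geq \limsup_{n\to\infty}\frac{Y(t_n^{(k)})}{\sqrt{2t_n^{(k)}\log\log t_n^{(k)}}} = \frac{1}{\sqrt{3}}, \quad\text{a.s.}
\]
The substantive task is the matching reverse inequality, for which I would control the oscillation of $Y$ over each interval $[t_n^{(k)},t_{n+1}^{(k)}]$ and show it is negligible on the $\sqrt{t\log\log t}$ scale when $k$ is large.

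To do this, write $Y(t) = g_1(t)M_2(t) + g_3(t)M_4(t)$ with $M_i(t) = \int_0^t g_i(s)\,dB(s)$ for $i \in \{2,4\}$, and apply It\^o's product rule to each summand, noting that $g_1, g_3$ are deterministic and $C^1$. Using $\sin(x - 3\pi/4) = -\cos(x-\pi/4)$ and $\cos(x-3\pi/4) = \sin(x-\pi/4)$, one obtains the key cancellation
\[
    g_1(s)g_2(s) = -\cos^2\phi(s), \qquad g_3(s)g_4(s) = \sin^2\phi(s),
\]
with $\phi(s) := 2\sqrt{|b|(1+s)} - \pi/4$, so both products are bounded by $1$. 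Consequently, for $t\in[t_n^{(k)},t_{n+1}^{(k)}]$,
\[
    g_1(t)M_2(t) - g_1(t_n^{(k)})M_2(t_n^{(k)}) = N_2(t) + R_2(t),
\]
where $N_2(t) := \int_{t_n^{(k)}}^{t}g_1(s)g_2(s)\,dB(s)$ and $R_2(t) := \int_{t_n^{(k)}}^{t}M_2(s)\,g_1'(s)\,ds$, with the analogous decomposition for $g_3 M_4$. Direct differentiation gives $|g_1'(s)|,|g_3'(s)| = O((1+s)^{-3/4})$, and the law of the iterated logarithm applied to the continuous martingales $M_2, M_4$ (whose quadratic variations grow like $\pi|b|^{1/2}(1+s)^{3/2}/3$ by computations paralleling Lemma~\ref{lm:intsub}) yields $|M_i(s)| = O(s^{3/4}\sqrt{\log\log s})$. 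Together with the estimate $\Delta t_n^{(k)} = O(\sqrt{t_n^{(k)}}/2^k)$ from \eqref{eq.Njtjasy}--\eqref{eq:jall}, the Riemann remainders $R_2, R_4$ are $O(2^{-k}\sqrt{t_n^{(k)}\log\log t_n^{(k)}})$ eventually almost surely.

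For the Gaussian martingales $N_2, N_4$, Doob's inequality combined with Gaussian tail estimates (or Dambis--Dubins--Schwarz and the reflection principle) gives
\[
    \mathbb{P}\Big(\sup_{t\in[t_n^{(k)},t_{n+1}^{(k)}]}|N_i(t)| > 2^{-k/4}\sqrt{t_n^{(k)}\log\log t_n^{(k)}}\Big) \leq 4\exp\!\big(-c\,2^{k/2}\sqrt{t_n^{(k)}}\,\log\log t_n^{(k)}\big),
\]
using the crucial bound $\int_{t_n^{(k)}}^{t_{n+1}^{(k)}}(g_1 g_2)^2\,ds \leq \Delta t_n^{(k)}$. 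Since $\sqrt{t_n^{(k)}} \sim Cn/2^k$, the exponent is of order $n\log\log n$, so the probabilities are summable in $n$; Borel--Cantelli yields $\sup_t|N_i(t)| = O(2^{-k/4}\sqrt{t_n^{(k)}\log\log t_n^{(k)}})$ eventually almost surely. Combining these estimates, on an almost-sure event $\tilde\Omega^{(k)}$,
\[
    \limsup_{t\to\infty}\frac{Y(t)}{\sqrt{2t\log\log t}} \leq \frac{1}{\sqrt{3}} + C\cdot 2^{-k/4}.
\]
Intersecting the countable family $\{\tilde\Omega^{(k)}\}_{k\geq 1}$ and letting $k\to\infty$ gives the upper bound $1/\sqrt{3}$; applying the same argument to $-Y$ delivers the symmetric bound for the liminf. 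The main obstacle will be the bookkeeping for the Borel--Cantelli step, where the boundedness of $g_1 g_2$ and $g_3 g_4$ (which comes from the trigonometric cancellation above) is essential: a naive bound using only $|g_2(s)| \lesssim s^{1/4}$ would give $\Sigma_n^2 = O(\sqrt{t_n^{(k)}}\Delta t_n^{(k)})$ and render the tail bound insufficient for a summable Borel--Cantelli argument along the quadratic-in-$n$ sequence $\{t_n^{(k)}\}$.
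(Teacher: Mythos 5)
Your proposal is correct, and the lower bound via Lemma~\ref{lm:seqden} is exactly the paper's opening move; but your control of the oscillation of $Y$ over $[t_n^{(k)},t_{n+1}^{(k)}]$ takes a genuinely different route. The paper renormalises to $\tilde{Y}(t)=\sqrt{\pi}|b|^{1/4}(1+t)^{1/4}Y(t)$ so that the prefactors of the stochastic integrals become $\cos(2\sqrt{|b|(1+t)}-\pi/4)$ and $\sin(2\sqrt{|b|(1+t)}-\pi/4)$, then splits $\tilde{Y}(t)-\tilde{Y}(t_n)$ into (i) the raw increment $\int_{t_n}^t g_i\,dB$, handled by the time--change/reflection--principle tail bound with the \emph{sub--scale} threshold $\epsilon_n=t_n^{5/8}\sqrt{\log\log t_n}$ (so the crude bound $\int_{t_n}^{t_{n+1}}g_i^2\,ds=O(t_n/2^k)$ suffices and this term is $o(t_n^{3/4}\sqrt{\log\log t_n})$ outright), and (ii) a prefactor--variation term controlled by the Lipschitz continuity of the cosine in $2\sqrt{|b|(1+\cdot)}$ together with the LIL for $\int_0^{t_n}g_i\,dB$, which contributes the $\pi/2^k$ factor that forces the final intersection over $k$. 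You instead apply It\^o's product rule, which converts the whole oscillation into a martingale $\int_{t_n}^t g_1g_2\,dB+\int_{t_n}^t g_3g_4\,dB$ with integrands bounded by $1$ (your trigonometric identities $g_1g_2=-\cos^2\phi$, $g_3g_4=\sin^2\phi$ check out) plus a drift remainder $\int_{t_n}^t M_i(s)g_j'(s)\,ds$; the bounded integrand gives the much smaller quadratic variation $\le\Delta t_n^{(k)}=O(\sqrt{t_n^{(k)}}/2^k)$, which is what lets you work at the full scale $\sqrt{t\log\log t}$ with threshold $2^{-k/4}\sqrt{t_n^{(k)}\log\log t_n^{(k)}}$ and still get summability in Borel--Cantelli. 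Your closing remark should be read as a statement about your own scheme only: the naive bound $|g_2(s)|\lesssim s^{1/4}$ is \emph{not} fatal per se --- the paper uses precisely that bound and survives by lowering the threshold to $t_n^{5/8}\sqrt{\log\log t_n}$ --- but at your full-scale threshold the cancellation is indeed indispensable. The one step you gloss over (passing from the bound along $\{t_n^{(k)}\}$ plus the increment bound to the continuous-time limsup, i.e.\ the comparison of $\sqrt{2t\log\log t}$ with $\sqrt{2t_n^{(k)}\log\log t_n^{(k)}}$) is the same routine bookkeeping the paper carries out with its quantity $K_n$, so nothing is missing in substance.
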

\begin{proof}[Proof of Lemma~\ref{lm:uplil}]
A lower bound on the limit superior may easily be obtained from
Lemma~\ref{lm:seqden}. We have
\begin{equation}\label{eq:Ytlow}
    \limsup_{t\to\infty}\frac{Y(t)}{\sqrt{2t\log\log t}} \geq \limsup_{n\to\infty}\frac{Y(t_n)}{\sqrt{2t_n\log\log t_n}}
    = \frac{1}{\sqrt{3}}, \quad \text{a.s.},
\end{equation}
where the sequence $\{t_n\}_{n\in\Z^+}$ is as defined by
\eqref{eq:alltn} (for ease of notation we omit the $k$-dependence).
We now turn our attention to obtaining an upper bound.

Define $\tilde{Y}(t):=\sqrt{\pi}|b|^{1/4}(1+t)^{1/4}Y(t)$ for $t\geq
0$. Then from Lemma~\ref{eq:lilgg} we have
\begin{equation}\label{eq:YtYn}
    \limsup_{n\to\infty}\frac{|\tilde{Y}(t_n)|}{\sqrt{2}t_n^{3/4}\sqrt{\log\log t_n}}
    = \limsup_{n\to\infty}\frac{\sqrt{\pi}|b|^{1/4} |Y(t_n)|}{\sqrt{2t_n\log\log t_n}} = \frac{\sqrt{\pi}|b|^{1/4}}{\sqrt{3}}, \quad \text{a.s.},
\end{equation}
where the limit superior is taken through the sequence
$\{t_n\}_{n\in\Z^+}$ defined in \eqref{eq:alltn} (again for ease of
notation we omit the $k$-dependence). Now, for $t_n\leq t\leq
t_{n+1}$,
\begin{align*}
    \frac{\tilde{Y}(t)}{\sqrt{2}t^{3/4}\sqrt{\log\log t}}
    &= \frac{\tilde{Y}(t)-\tilde{Y}(t_n)}{\sqrt{2}t_n^{3/4}\sqrt{\log\log t_n}}
    \frac{\sqrt{2}t_n^{3/4}\sqrt{\log\log t_n}}{\sqrt{2}t^{3/4}\sqrt{\log\log t}} \\
    &\quad + \frac{\tilde{Y}(t_n)}{\sqrt{2}t_n^{3/4}\sqrt{\log\log t_n}}
    \frac{\sqrt{2}t_n^{3/4}\sqrt{\log\log t_n}}{\sqrt{2}t^{3/4}\sqrt{\log\log t}},
\end{align*}
and so
\begin{align}\label{eq:Ytlil}
    \frac{\tilde{Y}(t)}{\sqrt{2}t^{\frac{3}{4}}\sqrt{\log\log t}}
    \leq \frac{\sup_{t_n\leq t\leq t_{n+1}}|\tilde{Y}(t)-\tilde{Y}(t_n)|}{\sqrt{2}t_n^{\frac{3}{4}}\sqrt{\log\log t_n}}
    + \frac{|\tilde{Y}(t_n)|}{\sqrt{2}t_n^{\frac{3}{4}}\sqrt{\log\log t_n}}, \, t\in [t_n,t_{n+1}].
\end{align}
We firstly examine the asymptotic behaviour of $\sup_{t_n\leq t\leq
t_{n+1}}|\tilde{Y}(t)-\tilde{Y}(t_n)|$.
Define
\begin{align*}
    \tilde{Y}_1(t) &= \sqrt{\pi}|b|^{1/4}(1+t)^{1/4}g_1(t)\int_{0}^{t}g_2(s)dB(s), \quad t\geq 0,\\
    \tilde{Y}_2(t) &= \sqrt{\pi}|b|^{1/4}(1+t)^{1/4}g_3(t)\int_{0}^{t}g_4(s)dB(s), \quad t\geq 0.
\end{align*}
Then $\tilde{Y}(t) = \tilde{Y}_1(t) + \tilde{Y}_2(t)$ and for $t\in
[t_n,t_{n+1}]$ we have
\begin{align*}
    \lefteqn{|\tilde{Y}_1(t) - \tilde{Y}_1(t_n)|}\\
    & \leq |\cos(2\sqrt{|b|(1+t)}-\pi/4)| \left|\int_{0}^{t}g_2(s)dB(s)-\int_{0}^{t_n}g_2(s)dB(s)\right| \\
    &\quad+ |\cos(2\sqrt{|b|(1+t)}-\pi/4)-\cos(2\sqrt{|b|(1+t_n)}-\pi/4)| \left|\int_{0}^{t_n}g_2(s)dB(s)\right| \\
    &\leq \left|\int_{t_n}^{t}g_2(s)dB(s)\right| + |2\sqrt{|b|(1+t)} -2\sqrt{|b|(1+t_n)}| \left|\int_{0}^{t_n}g_2(s)dB(s)\right| \\
    &=\left|\int_{t_n}^{t}g_2(s)dB(s)\right| + 2\sqrt{|b|}\left(\frac{1+t-(1+t_n)}{\sqrt{1+t}+\sqrt{1+t_n}}\right)
    \left|\int_{0}^{t_n}g_2(s)dB(s)\right|,
\end{align*}
where the Lipschitz continuity of $\cos(2\sqrt{|b|(1+\cdot)}-\pi/4)$
on  $\mathbb{R}$ has been used. A similar inequality can be
developed for $|\tilde{Y}_2(t)-\tilde{Y}_2(t_n)|$ for $t\in
[t_n,t_{n+1}]$.
Using the fact that $|\tilde{Y}(t)-\tilde{Y}(t_n)|\leq
|\tilde{Y}_1(t) - \tilde{Y}_1(t_n)|+
|\tilde{Y}_2(t)-\tilde{Y}_2(t_n)|$, we obtain
\begin{align}
    \sup_{t_n\leq t\leq t_{n+1}}|\tilde{Y}(t)-\tilde{Y}(t_n)|
  \leq \sup_{t_n\leq t\leq t_{n+1}}\left|\int_{t_n}^{t}g_2(s)dB(s)\right|
  + \sup_{t_n\leq t\leq t_{n+1}}\left|\int_{t_n}^{t}g_4(s)dB(s)\right| \notag\\
    + 2\sqrt{|b|}\left(\frac{t_{n+1}-t_n}{2\sqrt{1+t_n}}\right)
    \left\{\left|\int_{0}^{t_n}g_2(s)dB(s)\right| + \left|\int_{0}^{t_n}g_4(s)dB(s)\right| \right\}, \label{eq:YtY}
\end{align}
where we have used the fact that
$1/(\sqrt{1+t}+\sqrt{1+t_n})\leq1/(2\sqrt{1+t_n})$ for $t\geq t_n$.
We now estimate the order of the largest fluctuations of each term
on the right hand side of \eqref{eq:YtY}. We show that, for
$i\in\{2,4\}$
\begin{equation}\label{eq:glim0}
    \limsup_{n\to\infty}\frac{\sup_{t_n\leq t\leq t_{n+1}}\left|\int_{t_n}^{t}g_i(s)dB(s)\right|}{t_n^{3/4}\sqrt{\log\log t_n}} =0, \quad \text{a.s.}
\end{equation}
Now, let $\epsilon_n>0$. By the martingale time change theorem, for
every $n$, there exists a standard Brownian motion $\tilde{B}_{i,n}$
such that
\begin{align*}
    \mathbb{P}\left[\sup_{t_n\leq t\leq t_{n+1}}\left|\int_{t_n}^{t}g_i(s)\,dB(s)\right|\geq\epsilon_n\right]
    &=\mathbb{P}\left[\sup_{t_n\leq t\leq t_{n+1}}\left|\tilde{B}_{i,n}\left( \int_{t_n}^{t}g_i(s)^2\,ds \right)\right|\geq\epsilon_n\right] \\
    &=\mathbb{P}\left[ \sup_{0\leq u\leq \int_{t_n}^{t_{n+1}}g_i(s)^2 ds}\left|\tilde{B}_{i,n}(u)\right| \geq\epsilon_n\right]
    \end{align*}
    Hence there is a Brownian motion $B_{i,n}^\ast$ such that
    \begin{align}
    \lefteqn{\mathbb{P}\left[\sup_{t_n\leq t\leq t_{n+1}}\left|\int_{t_n}^{t}g_i(s)\,dB(s)\right|\geq\epsilon_n\right]}\nonumber\\
    &\leq 2\,\mathbb{P}\left[ \sup_{0\leq u\leq \int_{t_n}^{t_{n+1}}g_i(s)^2 ds} B^*_{i,n}(u) \geq\epsilon_n\right]
    = 2\,\mathbb{P}\left[ \left|B_{i,n}^*\left( \int_{t_n}^{t_{n+1}}g_i(s)^2 ds \right) \right| \geq\epsilon_n\right] \nonumber\\
    \label{eq.estgitntnpl1}
    &= 4\,\mathbb{P}\left[ B_{i,n}^*\left( \int_{t_n}^{t_{n+1}}g_i(s)^2 ds \right) \geq\epsilon_n\right]
    = 4\,\left\{ 1-\Phi\left(\frac{\epsilon_n}{\sqrt{\int_{t_n}^{t_{n+1}}g_i(s)^2 ds}} \right) \right\},
\end{align}
where we have used the fact that $\max_{0\leq s\leq t} W(s)$ has the
same distribution as $|W(t)|$ when $W$ is a standard Brownian
motion, the symmetry of the distribution of a standard Brownian
motion, and $\Phi$ denotes the distribution function of a standard
normal random variable. Now,
\begin{align*}
    g_2(t)^2 &= \pi |b|^{1/2} (1+t)^{1/2} \sin^2\left(2\sqrt{|b|(1+t)}-\frac{3}{4}\pi\right)\leq \pi |b|^{1/2} (1+t)^{1/2}, \\
    g_4(t)^2 &= \pi |b|^{1/2} (1+t)^{1/2} \cos^2\left(2\sqrt{|b|(1+t)}-\frac{3}{4}\pi\right)\leq \pi |b|^{1/2} (1+t)^{1/2}.
\end{align*}
Thus, by \eqref{eq.Njtjasy} we have
\begin{equation}\label{eq:giasy}
    \int_{t_n}^{t_{n+1}}g_i(s)^2 ds \leq \pi |b|^{1/2} (1+t_{n+1})^{1/2} (t_{n+1}-t_n) \sim \pi |b|^{1/2} t_n^{1/2} \Delta t_n , \quad \text{ as $n\to\infty$},
\end{equation}
and therefore by \eqref{eq.Njtjasy} and \eqref{eq:jall}
\[
    \limsup_{n\to\infty}
    \frac{\sqrt{\int_{t_n}^{t_{n+1}}g_i(s)^2 ds}}{ N_n}
    \leq \limsup_{n\to\infty} \frac{\pi^{1/2} |b|^{1/4}t_n^{1/4} (\Delta t_n)^{1/2}}{N_n}
    = \frac{1}{|b|^{1/2}} \pi^{2} \frac{1}{2^{1/2+k/2}}.
\]
So letting $\epsilon_n=t_n^{5/8}\sqrt{\log\log t_n}$, and using the
last relation and \eqref{eq.Njtjasy} gives
\begin{align*}
   \lefteqn{\liminf_{n\to\infty}
    \frac{\epsilon_n}{\sqrt{\int_{t_n}^{t_{n+1}}g_i(s)^2 ds} \, \cdot n^{1/4} \sqrt{\log\log n}}}\\
    &=
 \liminf_{n\to\infty}
    \frac{t_n^{5/8}\sqrt{\log\log t_n}}{N_n n^{1/4} \sqrt{\log\log n}}\frac{N_n}{\sqrt{\int_{t_n}^{t_{n+1}}g_i(s)^2 ds}}\\
     &\geq
     \liminf_{n\to\infty} \frac{(\frac{1}{|b|}\frac{1}{2^4 2^{2k}} n^2\pi^2)^{5/8}}{\frac{n}{2^{2}2^{k}} n^{1/4}}
   \frac{1}{\frac{1}{|b|^{1/2}} \pi^{2} \frac{1}{2^{1/2+k/2}}}=:C_k'>0.
\end{align*}
Therefore there exists a positive constant $C_k$ such that
\[
    \frac{\epsilon_n}{\sqrt{\int_{t_n}^{t_{n+1}}g_i(s)^2 ds}} \geq C_k (1+n)^{1/4} \sqrt{\log\log (n+e^e)}, \quad n\geq 1.
\]
By \eqref{eq.estgitntnpl1}, this implies 
\begin{multline*}
    \mathbb{P}\left[\sup_{t_n\leq t\leq t_{n+1}}\left|\int_{t_n}^{t}g_i(s)dB(s)\right|\geq\epsilon_n\right]
    \\\leq 4\,\left\{ 1-\Phi\left( C_k (1+n)^{1/4} \sqrt{\log\log (n+e^e)} \right) \right\}, \quad n\geq 1.
\end{multline*}
Now from \cite[Problem~2.9.22]{K&S},
\[
    1-\Phi(x) = \frac{1}{\sqrt{2\pi}}\int_{x}^{\infty}\e^{-u^2/2}du \leq
    \frac{1}{\sqrt{2\pi}}\frac{1}{x}\e^{-x^2/2}, \quad x>0.
\]
Thus, for $n\geq 1$
\begin{multline*}
\mathbb{P}\left[\sup_{t_n\leq t\leq
t_{n+1}}\left|\int_{t_n}^{t}g_i(s)dB(s)\right|\geq\epsilon_n\right]
\\ \leq \frac{4}{\sqrt{2\pi}}\frac{1}{C_k (1+n)^{1/4} \sqrt{\log\log
(n+e^e)}}\e^{-\frac{1}{2}C_k^2 (1+n)^{1/2} \log\log (n+e^e)}.
\end{multline*}
Therefore
\[
    \sum_{n=0}^{\infty}\mathbb{P}\left[\sup_{t_n\leq t\leq t_{n+1}}\left|\int_{t_n}^{t}g_i(s)dB(s)\right|\geq\epsilon_n\right]<+\infty.
\]
The Borel-Cantelli Lemma then gives that
\[
\limsup_{n\to\infty} \frac{\sup_{t_n\leq t\leq
t_{n+1}}\left|\int_{t_n}^{t}g_i(s)dB(s)\right|}{t_n^{5/8}\sqrt{\log\log
t_n}}\leq 1, \quad\text{a.s.}
\]
Therefore \eqref{eq:glim0} holds. We now show  for $i\in \{2,4\}$
that
\begin{equation}\label{eq:lilgi}
    \limsup_{n\to\infty}\frac{\left|\int_{0}^{t_n}g_i(s)dB(s)\right|}{\sqrt{2}t_n^{3/4}\log\log t_n}
    = \frac{\sqrt{\pi} |b|^{1/4}}{\sqrt{3}} , \quad \text{a.s.}
\end{equation}
Define
\[
    X_n^{(i)} := \int_{t_{n-1}}^{t_{n}}g_i(s)dB(s), \quad n\geq 1.
\]
Then
\[
    S_n^{(i)} := \int_{0}^{t_{n}}g_i(s)dB(s) = \sum_{j=1}^{n}X_j^{(i)}.
\]
Now from \eqref{eq:jall}, \eqref{eq.Njtjasy} and \eqref{eq:giasy}
we get
\[
    \sigma_n^2:=\Var[X_n^{(i)}] = \int_{t_{n-1}}^{t_{n}}g_i(s)^2 ds = O(t_n), \quad \text{as $n\to\infty$},
\]
while, from Lemma~\ref{lm:intsub}
\[
    s_n^2:=\Var[S_n^{(i)}] = \int_{0}^{t_{n}}g_i(s)^2 ds \sim \frac{\pi}{3}|b|^{1/2} t_n^{3/2}, \quad \text{as $n\to\infty$}.
\]
and so $\sigma_n/s_n \to0$ as $n\to\infty$. Hence we may apply
Lemma~\ref{lm:norlil} to $S^{(i)}_n$ to obtain
\[
    \limsup_{n\to\infty}\frac{|\int_{0}^{t_{n}}g_i(s)dB(s)|}
    {\sqrt{2\int_{0}^{t_{n}}g_i(s)^2 ds\,\log\log \int_{0}^{t_{n}}g_i(s)^2 ds}} =1, \quad \text{a.s.}
\]
which is equivalent to \eqref{eq:lilgi}.

Lastly observe from \eqref{eq:jall} and \eqref{eq.Njtjasy} that
\begin{equation} \label{eq.deltndivtnsqrt}
    \lim_{n\to\infty}2\sqrt{|b|}\left(\frac{t_{n+1}-t_n}{2\sqrt{1+t_n}}\right) = \frac{\pi}{2.2^k}.
\end{equation}
Scaling \eqref{eq:YtY}, taking limit superiors across the resulting
inequality, and employing \eqref{eq:glim0}, \eqref{eq:lilgi} and
\eqref{eq.deltndivtnsqrt} gives
\begin{equation} \label{eq.contyYtYTn}
    \limsup_{n\to\infty}\frac{\sup_{t_n\leq t\leq t_{n+1}}|\tilde{Y}(t)-\tilde{Y}(t_n)|}{\sqrt{2}t_n^{3/4}\sqrt{\log\log t_n}}
    \leq \frac{\pi}{2^k}\frac{\sqrt{\pi} |b|^{1/4}}{\sqrt{3}}, \quad \text{a.s.}
\end{equation}
Next, define
\[
K_n:=\frac{\sup_{t_n\leq t\leq
t_{n+1}}|\tilde{Y}(t)-\tilde{Y}(t_n)|}{\sqrt{2}t_n^{3/4}\sqrt{\log\log
t_n}}
    + \frac{|\tilde{Y}(t_n)|}{\sqrt{2}t_n^{3/4}\sqrt{\log\log t_n}}.
\]
Since for every $t>0$ there exists $N(t)$ such that $t_{N(t)}\leq
t<t_{N(t)+1}$, it follows from \eqref{eq:Ytlil} that
\[
\frac{\tilde{Y}(t)}{\sqrt{2} t^{3/4}\sqrt{\log \log t}} \leq K_{N(t)}. 
\]
Now, by \eqref{eq.contyYtYTn} and \eqref{eq:YtYn} we have that
\[
\limsup_{n\to\infty}K_n
    \leq \frac{\sqrt{\pi}|b|^{1/4}}{\sqrt{3}}\left(\frac{\pi}{2^k} + 1 \right)
\]
and since $N(t)\to +\infty$ as $t\to\infty$, we have
\[
    \limsup_{t\to\infty}\frac{\tilde{Y}(t)}{\sqrt{2}t^{3/4}\sqrt{\log\log t}} 
    \leq \frac{\sqrt{\pi}|b|^{1/4}}{\sqrt{3}}\left(\frac{\pi}{2^k} + 1 \right)
\]
holding on an almost sure set $\Omega_{k}$. This result also holds
on the almost sure set $\Omega^*=\bigcap_{k\in\Z^+}\Omega_k$ and
hence
\[
 \limsup_{t\to\infty}\frac{\tilde{Y}(t)}{\sqrt{2}t^{3/4}\sqrt{\log\log t}}
 \leq \frac{\sqrt{\pi}|b|^{1/4}}{\sqrt{3}}, \quad \text{a.s.}
\]
Since $\tilde{Y}(t)=\sqrt{\pi}|b|^{1/4}(1+t)^{1/4}Y(t)$, we have
that
\[
 \limsup_{t\to\infty}\frac{Y(t)}{\sqrt{2 t\log\log t}}
 \leq \frac{1}{\sqrt{3}}, \quad \text{a.s.}
\]
Combining this upper bound on the limit superior with
\eqref{eq:Ytlow} gives the required limit superior.

The limit inferior result may be obtained by considering the process
$Z(t)=-Y(t)$. Then
\[
    Z(t) = g_1(t)\int_{0}^{t}g_2(s)dW(s)+g_3(t)\int_{0}^{t}g_4(s)dW(s), \quad t\geq0,
\]
where $W(t):=-B(t)$ is a standard Brownian motion. One then may
apply the foregoing argument to deduce that
\[
    -\liminf_{t\to\infty}\frac{Y(t)}{\sqrt{2t\log\log t}}= \limsup_{t\to\infty}\frac{-Y(t)}{\sqrt{2t\log\log t}}
    =\limsup_{t\to\infty}\frac{Z(t)}{\sqrt{2t\log\log t}}=\frac{1}{\sqrt{3}}, \quad \text{a.s.}
\]
as required.
\end{proof}

The proof of Theorem~\ref{thm:Bes} can now given. It is chiefly
concerned with identifying the leading order terms which contribute
to the overall asymptotic behaviour of $X$. The asymptotic behaviour
of these leading order terms are then known from
Lemma~\ref{lm:uplil}.

\begin{proof}[Proof of Theorem~\ref{thm:Bes}]
By \eqref{eq.xrep}, \eqref{eq:detspec7}, and \eqref{eq:resdecomp8}
the solution $X$ of \eqref{eq.longmemorysdde} has the representation
\begin{equation}\label{eq:XBesrep}
    X(t) = r_7(t)c_7 + r_8(t)c_8 + \sigma r_7(t)\int_{0}^{t}d_7(s)dB(s) + \sigma r_8(t)\int_{0}^{t}d_8(s)dB(s).
\end{equation}
By \eqref{eq:r7r8} and \eqref{eq:Besasy}, $r_7$ and $r_8$ have
asymptotic behaviour given by
\begin{align*}
    r_7(t) &= \frac{1}{\sqrt{\pi}} |b|^{-1/4}(1+t)^{-1/4} \{\cos(2\sqrt{|b|(1+t)} - \pi/4) + O(t^{-1/2}) \} ,
    \quad \text{as $t\to\infty$},\\
    r_8(t) &= \frac{1}{\sqrt{\pi}} |b|^{-1/4}(1+t)^{-1/4} \{\sin(2\sqrt{|b|(1+t)} - \pi/4) + O(t^{-1/2}) \}, \quad \text{ as $t\to\infty$}.
    \end{align*}
Also by \eqref{eq:cntsd5} and \eqref{eq:Besasy}, $d_7$ and $d_8$
have asymptotic behaviour given by
\begin{align*}
    d_7(s)
    &= \sqrt{\pi} |b|^{1/4} (s+1)^{1/4} \left(\sin\left(2\sqrt{|b|(s+1)}-\frac{3}{4}\pi\right) + O(s^{-1/2})\right),
    \quad \text{ as $s\to\infty$}, \\
    d_8(s)
    &= \sqrt{\pi} |b|^{1/4} (s+1)^{1/4} \left(\cos\left(2\sqrt{|b|(s+1)}-\frac{3}{4}\pi\right) + O(s^{-1/2}) \right),
    \quad \text{ as $s\to\infty$}.
\end{align*}
Define the functions $R_7$, $R_8$, $D_7$ and $D_8$ so that, for
$s\geq 0$ and $t\geq 0$ we have
\begin{subequations}\label{eq:rRdD}
\begin{align} \label{eq:R7R8}
r_7(t) &= g_1(t) + R_7(t), \quad r_8(t) = g_3(t) + R_8(t), \\
 \label{eq:D7D8}
 d_7(s) &= g_2(s) + D_7(s), \quad d_8(s) = g_4(s) + D_8(s),
\end{align}
\end{subequations}
where $g_1$,$g_2$,$g_3$ and $g_4$ are as defined in
\eqref{eq:g1g2g3g4}. Notice that $R_7$, $R_8$, $D_7$ and $D_8$ are
continuous functions. Since
\begin{gather*}
R_7(t) = O(t^{-3/4}), \quad R_8(t)=O(t^{-3/4}) \quad \text{ as $t\to\infty$}, \\
D_7(s) = O(s^{-1/4}), \quad D_8(s)=O(s^{-1/4}) \quad \text{ as
$s\to\infty$},
\end{gather*}
it follows that there exists $M>0$ such that
\begin{align*}
|R_7(t)| &\leq  M(1+t)^{-3/4}, \quad |R_8(t)|\leq M(1+t)^{-3/4} \quad t\geq 0, \\
|D_7(s)| &\leq M(1+s)^{-1/4}, \quad |D_8(s)|\leq M(1+s)^{-1/4},
\quad s\geq 0.
\end{align*}

Next, we decompose $X$ according to
\begin{multline}\label{eq:XRDg}
    \frac{X(t)}{\sqrt{2t\log\log t}} = \frac{r_7(t)c_7 + r_8(t)c_8}{\sqrt{2t\log\log t}}
    + \sigma\frac{g_1(t)\int_{0}^{t}g_2(s)dB(s)}{\sqrt{2t\log\log t}}
    + \sigma\frac{g_3(t)\int_{0}^{t}g_4(s)dB(s)}{\sqrt{2t\log\log t}} \\
    + \sigma\frac{R_7(t)\int_{0}^{t}g_2(s)dB(s)}{\sqrt{2t\log\log t}}
    + \sigma\frac{R_8(t)\int_{0}^{t}g_4(s)dB(s)}{\sqrt{2t\log\log t}} \\
    + \sigma\frac{r_7(t)\int_{0}^{t}D_7(s)dB(s)}{\sqrt{2t\log\log t}}
    + \sigma\frac{r_8(t)\int_{0}^{t}D_8(s)dB(s)}{\sqrt{2t\log\log t}} .
\end{multline}
Since $r_7(t)\to 0$ and $r_8(t)\to 0$ as $t\to\infty$, the first
term on the righthand--side of \eqref{eq:XRDg} tends to zero as
$t\to\infty$. The asymptotic behaviour of the second and third terms
is described by Lemma~\ref{lm:uplil}. We now proceed to demonstrate
that the remaining terms have do not contribute to size of the
largest oscillations of $X$.

We start by considering the last two terms on the right hand side of
\eqref{eq:XRDg}. If $\int_{0}^{\infty}D_7(s)^2ds<\infty$ then
because $r_7(t)\to 0$ as $t\to\infty$, we have
\begin{equation}\label{eq:D7asy}
    \lim_{t\to\infty}\frac{r_7(t)\int_{0}^{t}D_7(s)dB(s)}{\sqrt{2t\log\log t}}=0, \quad \text{a.s.}
\end{equation}
On the other hand, if $\lim_{t\to\infty} \int_{0}^{t}D_7(s)^2
ds=+\infty$, by using the estimate on $D_7$, for all $t\geq 0$ we
have
\[
    \int_{0}^{t}D_7(s)^2 ds \leq M^2\int_{0}^{t}(1+s)^{-1/2} ds \leq 2M^2(1+t)^{1/2}.
 \]
Therefore
\[
    \limsup_{t\to\infty}\frac{2\int_{0}^{t}D_7(s)^2ds\log\log\int_{0}^{t}D_7(s)^2ds}{t^{1/2}\log\log t}\leq 4M^2.
\]
Hence by the Law of the iterated logarithm for continuous
martingales, we have
\begin{align*}
\lefteqn{\limsup_{t\to\infty}
\frac{\left|r_7(t)\int_{0}^{t}D_7(s)dB(s)\right|}{\sqrt{2t\log\log t}}}\\
&=\limsup_{t\to\infty}\frac{|r_7(t)|\left|\int_{0}^{t}D_7(s)dB(s)\right|}{\sqrt{2\int_{0}^{t}D_7(s)^2ds\log\log\int_{0}^{t}D_7(s)^2ds}}
\frac{\sqrt{2\int_{0}^{t}D_7(s)^2ds\log\log\int_{0}^{t}D_7(s)^2ds}}{\sqrt{2t\log\log t}}\\
&=\limsup_{t\to\infty}\frac{|r_7(t)|\sqrt{2\int_{0}^{t}D_7(s)^2ds\log\log\int_{0}^{t}D_7(s)^2ds}}{\sqrt{2t\log\log t}}\\
&\leq
M\limsup_{t\to\infty}\frac{t^{-1/4}\sqrt{2\int_{0}^{t}D_7(s)^2ds\log\log\int_{0}^{t}D_7(s)^2ds}}{\sqrt{t^{1/2}\log\log
t}}
\frac{t^{1/4}\sqrt{\log\log t}}{\sqrt{2t\log\log t}}\\
&\leq 2M^2\limsup_{t\to\infty}\frac{\sqrt{\log\log
t}}{\sqrt{2t\log\log t}}=0.
\end{align*}
Hence \eqref{eq:D7asy} holds.
 One may similarly show that
\begin{equation}\label{eq:D8asy}
    \lim_{t\to\infty}\frac{r_8(t)\int_{0}^{t}D_8(s)dB(s)}{\sqrt{2t\log\log t}}=0, \quad \text{a.s.}
\end{equation}
To estimate the asymptotic behaviour of the fourth and fifth terms
on the right hand side of \eqref{eq:XRDg}, we note from
Lemma~\ref{lm:intsub}, we have that
\[
\lim_{t\to\infty} \frac{\int_0^t g_2^2(s)\,ds}{t^{3/2}}
=\frac{1}{3}\pi|b|^{1/2}, \quad \lim_{t\to\infty} \frac{\int_0^t
g_4^2(s)\,ds}{t^{3/2}}=\frac{1}{3}\pi|b|^{1/2}.
\]
Therefore by the Law of the Iterated Logarithm for continuous
martingales we have
\[
    \limsup_{t\to\infty}\frac{\left|\int_{0}^{t}g_2(s)dB(s)\right|}{\sqrt{2}t^{3/4}\sqrt{\log\log t}}
    =\sqrt{\frac{\pi}{3}} |b|^{1/4},
    \quad \text{a.s.}
\]
Therefore, using the estimate on $R_7$ we have
\begin{align*}
\lefteqn{ \limsup_{t\to\infty}
\frac{\left|R_7(t)\int_{0}^{t}g_2(s)dB(s)\right|}{\sqrt{2t\log\log t}}}\\
&\leq M\limsup_{t\to\infty}
\frac{t^{-3/4}\left|\int_{0}^{t}g_2(s)dB(s)\right|}
{\sqrt{2}t^{3/4}\sqrt{\log\log t}}\frac{\sqrt{2}t^{3/4}\sqrt{\log\log t}}{\sqrt{2t\log\log t}}\\
&=M\sqrt{\frac{\pi}{3}} |b|^{1/4}\limsup_{t\to\infty}
\frac{\sqrt{2\log\log t}}{\sqrt{2t\log\log t}}=0.
\end{align*}
Thus,
\begin{equation}\label{eq:R7g2}
    \lim_{t\to\infty}\frac{R_7(t)\int_{0}^{t}g_2(s)dB(s)}{\sqrt{2t\log\log t}} =0, \quad \text{a.s.}
\end{equation}
Similarly it may be shown that
\begin{equation}\label{eq:R8g4}
    \lim_{t\to\infty}\frac{R_8(t)\int_{0}^{t}g_4(s)dB(s)}{\sqrt{2t\log\log t}} =0, \quad \text{a.s.}
\end{equation}
Then due to \eqref{eq:D7asy}, \eqref{eq:D8asy}, \eqref{eq:R7g2},
\eqref{eq:R8g4}, and Lemma~\ref{lm:uplil}, by taking the limit
superior across \eqref{eq:XRDg} we get
\[
    \limsup_{t\to\infty}\frac{X(t)}{\sqrt{2t\log\log t}} = \frac{\sigma}{\sqrt{3}}, \quad \text{a.s.}
\]
Taking the limit inferior and applying these preparatory estimates
along with  Lemma~\ref{lm:uplil} secures the corresponding limit
inferior.
\end{proof}


\begin{proof}[Proof of Theorem~\ref{thm.meanvarbessel}]
By \eqref{eq.autocovarianceaverage} and \eqref{eq:resdecomp8} we have that 
\begin{multline} \label{eq.Varbessel}
\frac{1}{\sigma^2}\text{Var}[X(t)]=\int_0^t r(t,s)^2\,ds \\
= r_7(t)^2\int_0^t d_7(s)^2\,ds + 2r_7(t)r_8(t)\int_0^t d_7(s)d_8(s)\,ds + r_8(t)^2\int_0^t d_8(s)^2\,ds.
\end{multline}
We deduce the asymptotic behaviour of the terms on the righthand side of \eqref{eq.Varbessel}. By the definition of $d_7$ we have the identity  
 \[
 \frac{1}{t^{3/2}}\int_0^t d_7(s)^2\,ds - \frac{1}{t^{3/2}}\int_0^t g_2^2(s)\,ds 
 =  2\frac{1}{t^{3/2}}\int_0^t g_2(s)D_7(s)\,ds +\frac{1}{t^{3/2}}\int_0^t D_7(s)^2.
 \]
 By the definition of $g_2$ and $D_7$, we have that $g_2(t)=O(t^{1/4})$ and $D_7(t) = O(t^{-1/4})$ as $t\to\infty$, so the limit as $t\to\infty$ 
 of the two terms on the right hand side is zero. Since the second term on the left hand side has limit $\pi |b|^{1/2}/3$ as $t\to\infty$, 
 we have 
 \begin{equation} \label{eq.intd72}
 \lim_{t\to\infty}  \frac{1}{t^{3/2}}\int_0^t d_7(s)^2\,ds = \frac{\pi}{3} |b|^{1/2}. 
 \end{equation}
 Similarly, we may establish  
  \begin{equation} \label{eq.intd82}
 \lim_{t\to\infty}  \frac{1}{t^{3/2}}\int_0^t d_8(s)^2\,ds = \frac{\pi}{3} |b|^{1/2}. 
 \end{equation}
 We determine the asymptotic behaviour of the integral in the second term on the right hand side of \eqref{eq.Varbessel}. First, we express $d_7$ and $d_8$ in terms of $g_2$, $g_4$, $D_7$ and $D_8$ to get 
 \begin{multline*}
 \int_0^t d_7(s)d_8(s)\,ds 
 \\
 =\int_0^t g_2(s)g_4(s)\,ds+\int_0^t \left\{ g_2(s)D_8(s)+ g_4(s)D_7(s)+D_7(s)D_8(s)\right\}\,ds.
 \end{multline*}
 Since $g_2(t)=O(t^{1/4})$, $g_4(t)=O(t^{1/4})$, $D_7(t) = O(t^{-1/4})$ and $D_8(t)=O(t^{-1/4})$ as $t\to\infty$, the second 
 integral on the right hand side is of order $t$ as $t\to\infty$. Finally, 
 \[
 \int_0^t  g_2(s)g_4(s)\,ds
 =\frac{1}{2}\pi  \int_0^t |b|^{1/2}  (1+s)^{1/2} \sin\left(4\sqrt{|b|(1+s)}-\frac{3}{2}\pi\right) \,ds.
 \]
Making a substitution in the integral leads to
\begin{multline*}
\int_0^t  |b|^{1/2} (1+s)^{1/2} \sin\left(4\sqrt{|b|(1+s)}-\frac{3}{2}\pi\right) \,ds\\
=\frac{1}{32|b|}\int_{4\sqrt{|b|}-\frac{3}{2}\pi}^{4\sqrt{|b|(1+t)}-\frac{3}{2}\pi }  (u+3\pi/2)^2 \sin(u) \,du.
\end{multline*}
Since the last integral can be evaluated exactly, we see that 
\[
\int_0^t  |b|^{1/2} (1+s)^{1/2} \sin\left(4\sqrt{|b|(1+s)}-\frac{3}{2}\pi\right) \,ds=O(t), \quad\text{ as $t\to\infty$},
\]
so it follows that 
  \begin{equation} \label{eq.intd7d8}
\int_0^t d_7(s)d_8(s)\,ds=O(t), \quad \text{as $t\to\infty$}. 
\end{equation}
We prepare one final estimate; it is on $r_7^2(t)+r_8^2(t)$ as $t\to\infty$. First we 
observe that because $g_1(t)=O(t^{-1/4})$, $g_3(t)=O(t^{-1/4})$, $R_7(t)=O(t^{-3/4})$ and $R_8(t)=O(t^{-3/4})$ as $t\to\infty$, 
it follows that 
\[
2g_1(t)R_7(t)+2g_3(t)R_8(t) + R_7^2(t)+R_8^2(t)=O(t^{-1}), \quad \text{ as $t\to\infty$}.
\]
Therefore
\begin{align*}
r_7^2(t)+r_8^2(t)
&=g_1^2(t)+g_3^2(t)+ 2g_1(t)R_7(t)+2g_3(t)R_8(t) + R_7^2(t)+R_8^2(t)\\
&=\frac{1}{\pi}|b|^{1/2}(1+t)^{-1/2} + O(t^{-1}),
\end{align*}
or 
\begin{equation} \label{eq.r72pr82}
\lim_{t\to\infty} \frac{r_7^2(t)+r_8^2(t)}{t^{-1/2}}=\frac{1}{\pi}|b|^{1/2}.
\end{equation}

Now, we return to estimate the asymptotic behaviour of $\text{Var}[X(t)]$ in \eqref{eq.Varbessel} using the estimates established above. We start 
by rewriting the identity \eqref{eq.Varbessel} according to
\begin{multline*}
\frac{1}{\sigma^2 t}\text{Var}[X(t)]
= \frac{r_7(t)^2}{t^{-1/2}}\left(\frac{\int_0^t d_7(s)^2\,ds}{t^{3/2}}- \frac{\pi|b|^{1/2}}{3}\right) 
\\+ 2\frac{r_7(t)}{t^{-1/4}}\frac{r_8(t)}{t^{-1/4}}\frac{\int_0^t d_7(s)d_8(s)\,ds}{t} \cdot \frac{1}{t^{1/2}}
\\+ \frac{r_8(t)^2}{t^{-1/2}}\left(\frac{\int_0^t d_8(s)^2\,ds}{t^{3/2}}- \frac{\pi|b|^{1/2}}{3}\right)
+ \frac{r_7(t)^2+r_8^2(t)}{t^{-1/2}} \cdot \frac{\pi|b|^{1/2}}{3}.
 \end{multline*}
Since $r_7(t)=O(t^{-1/4})$ and $r_8(t)=0(t^{-1/4})$, by \eqref{eq.intd72} and \eqref{eq.intd82}, the first and third terms on 
the right hand side have each limit zero as $t\to\infty$. Using these estimates on $r_7$ and $r_8$, along with  
\eqref{eq.intd7d8}, confirms that the second term has  zero limit as $t\to\infty$. The fourth term has limit $1/3$ as $t\to\infty$, by 
\eqref{eq.r72pr82}, and therefore we have 
\begin{equation*}
    \lim_{t\to\infty} \frac{\text{Var}[X(t)]}{t} =\frac{1}{3}\sigma^2,
\end{equation*}
as claimed.
\end{proof}

\end{document}